\documentclass[12pt, twoside]{article}
\setlength{\textwidth}{160mm}
\setlength{\textheight}{229mm}    
\setlength{\oddsidemargin}{0mm}
\setlength{\evensidemargin}{0mm}
\setlength{\topmargin}{-5mm}
\setlength{\skip\footins}{6mm plus 2mm}
%%%%%%%%%%%%%%%%%%%%%%%%%%%%%%%%%%%%%%%%%%%%%%%%%%%%%%%%%%%
%
%%%%%%%%%%%%%%%%%%%%%%%%%%%%%%%%%%%%%%%%%%%%%%%%%%%%%%%%%%%
\usepackage{times}
\usepackage{graphics}
\usepackage{theorem}
\usepackage{graphicx}
\usepackage{amsmath}
\usepackage{latexsym}
\usepackage{amssymb}
\usepackage{natbib}
%%%%%%%%%%%%%%%%%%%%%%%%%%%%%%%%%%%%%%%%%%%%%%%%%%%%%%%%%%%%
\usepackage[flushmargin]{footmisc}

%\renewcommand{\thefootnote}{\alph{footnote})}
%%%%%%%%%%%%%%%%%%%%%%%%%%%%%%%%%%%%%%%%%%%%%%%%%%%%%%%%%%%%
\theorembodyfont{\itshape}
\newtheorem{thm}{Theorem}[section]
\newtheorem{lem}{Lemma}[section]
\newtheorem{prop}{Proposition}[section]
\newtheorem{coro}{Corollary}[section]

\theorembodyfont{\rmfamily}
\newtheorem{defn}{Definition}[section]{\bf}{\rm}
\newtheorem{assumpt}{Assumption}[section]{\bf}{\rm}

\newtheorem{rem}{Remark}[section]{\itshape}{\rmfamily}
\newenvironment{proof}{\noindent{\it Proof.~~}}{\qed}

%%%%%%%%%%%%%%%%%%%%%%%%%%%%%%%%%%%%%%%%%%%%%%%%%%%%%%%%%%%%
% deleting extra spaces in eqnarray environment
%
\makeatletter
\def\eqnarray{\stepcounter{equation}\let\@currentlabel=\theequation
\global\@eqnswtrue
\global\@eqcnt\z@\tabskip\@centering\let\\=\@eqncr
$$\halign to \displaywidth\bgroup\@eqnsel\hskip\@centering
  $\displaystyle\tabskip\z@{##}$&\global\@eqcnt\@ne 
  \hfil$\;{##}\;$\hfil
  &\global\@eqcnt\tw@ $\displaystyle\tabskip\z@{##}$\hfil 
   \tabskip\@centering&\llap{##}\tabskip\z@\cr}
\makeatother
%%%%%%%%%%%%%%%%%%%%%%%%%%%%%%%%%%%%%%%%%%%%%%%%%%%%%%%%%%%
\makeatletter
    \renewcommand{\theequation}{%
    \thesection.\arabic{equation}}
    \@addtoreset{equation}{section}
  \makeatother
%%%%%%%%%%%%%%%%%%%%%%%%%%%%%%%%%%%%%%%%%%%%%%%%%%%%%%%%%%%

%%%%%%%%%%%%%%%%%%%%%%%%%%%%%%%%%%%%%%%%%%%%%%%%%%%%%%%%%%%

\def\vc#1{\mbox{\boldmath $#1$}}

\newcommand{\down}[2]{\smash{\lower#2\hbox{#1}}}
\newcommand{\up}[2]{\smash{\lower-#2\hbox{#1}}}
\newcommand{\dm}{\displaystyle}

\newcommand{\qed}{\hspace*{\fill}$\Box$\rule[-10pt]{0pt}{10pt}}

%%%%%%%%%%%%%%%%%%%%%%%%%%%%%%%%%%%%%%%%%%%%%%%%%%%%%%%%%%%
\newcommand{\EE}{\mathsf{E}}
\newcommand{\PP}{\mathsf{P}}
%%%%%%%%%%%%%%%%%%%%%%%%%%%%%%%%%%%%%%%%%%%%%%%%%%%%%%%%%%%

\newcommand{\calC}{\mathcal{C}}
\newcommand{\calD}{\mathcal{D}}

\newcommand{\calL}{\mathcal{L}}
\newcommand{\calR}{\mathcal{R}}
\newcommand{\calS}{\mathcal{S}}
\newcommand{\SC}{\mathcal{SC}}

\newcommand{\bbF}{\mathbb{F}}

\newcommand{\bbM}{\mathbb{M}}
\newcommand{\bbN}{\mathbb{N}}
\newcommand{\bbR}{\mathbb{R}}

\newcommand{\bbZ}{\mathbb{Z}}
%%%%%%%%%%%%%%%%%%%%%%%%%%%%%%%%%%%%%%%%%%%%%%%%%%%%%%%%%%%

% \newcommand{\mod}{\mathrm{mod}}
\newcommand{\rmt}{{\rm t}}
\newcommand{\rmd}{{\rm d}}
\newcommand{\rme}{{\rm e}}

%%%%%%%%%%%%%%%%%%%%%%%%%%%%%%%%%%%%%%%%%%%%%%%%%%%%%%%%%%%
\def\simhm#1{\stackrel{#1}{\sim}}

\def\ooverline#1{\overline{\overline{#1}}}
%%%%%%%%%%%%%%%%%%%%%%%%%%%%%%%%%%%%%%%%%%%%%%%%%%%%%%%%%%%

%\renewcommand{\labelenumi}{(\alph{enumi})}
%\def\lesimhm#1{\stackrel{#1}{\lesssim}}
\def\simhm#1{\stackrel{#1}{\sim}}

%%%%%%%%%%%%%%%%%%%%%%%%%%%%%%%%%%%%%%%%%%%%%%%%%%%%%%%%%
\newcommand{\dd}[1]{\if#11 1\!\!1 
\else {\if#1C I\!\!\!C
\else {\if#1G I\!\!\!G 
\else {\if#1J J\!\!\!J 
\else {\if#1S S\!\!\!S
\else {\if#1Z Z\!\!\!Z
\else {\if#1Q O\!\!\!\!Q
\else I\!\!#1
\fi} 
\fi}
\fi}
\fi} 
\fi} 
\fi} 
\fi} 
%%%%%%%%%%%%%%%%%%%%%%%%%%%%%%%%%%%%%%%%%%%%%%%%%%%%%%%%%%%

\pagestyle{myheadings} 
\markboth{\small H. Masuyama}
{Subexponential asymptotics of GI/G/1-type Markov chain}

\makeatother

%%%%%%%%%%%%%%%%%%%%%%%%%%%%%%%%%%%%%%%%%%%%%%%%%%%%%%%%%%%

\begin{document}\thispagestyle{plain} 

\hfill
{\small Last update date: \today}
%Submitted to STOCHASTIC MODELS, \today.

{\Large{\bf
\begin{center}
A sufficient condition for the subexponential asymptotics of
GI/G/1-type Markov chains with queueing applications
%
% \footnote[1]{This
%is a revised version of the paper published in Stochastic Models
%vol.~26, no.~4, pp.~505--548, 2010. \\ In the revised version, some
%errors are corrected.}
\end{center}
}
}

\begin{center}
{
Hiroyuki Masuyama%
\footnote[2]{E-mail: masuyama@sys.i.kyoto-u.ac.jp}
}

\medskip

{\small
Department of Systems
Science, Graduate School of Informatics, Kyoto University\\
Kyoto 606-8501, Japan
}% \samllsize ends

\bigskip
\medskip

{\small
\textbf{Abstract}

\medskip

\begin{tabular}{p{0.85\textwidth}}
The main contribution of this paper is to present a new sufficient
condition for the subexponential asymptotics of the stationary
distribution of a GI/GI/1-type Markov chain without jumps from level
``infinity" to level zero. For simplicity, we call such Markov chains
{\it GI/GI/1-type Markov chains without disasters} because they are
often used to analyze semi-Markovian queues without ``disasters",
which are negative customers who remove all the customers in the
system (including themselves) on their arrivals. In this paper, we demonstrate the
application of our main result to the stationary queue length
distribution in the standard BMAP/GI/1 queue. Thus we obtain new
asymptotic formulas and prove the existing formulas under weaker
conditions than those in the literature. In addition, applying our
main result to a single-server queue with Markovian arrivals and the
$(a,b)$-bulk-service rule (i.e., MAP/${\rm GI}^{(a,b)}$/1 queue), we obatin a subexponential asymptotic
formula for the stationary queue length distribution.

\end{tabular}
}
% \samllsize ends
\end{center}

\begin{center}
\begin{tabular}{p{0.90\textwidth}}
{\small
{\bf Keywords:} % 
Subexponential asymptotics; 
GI/G/1-type Markov chain; 
disaster; 
bulk service;
BMAP/GI/1 queue;
MAP/${\rm GI}^{(a,b)}$/1 queue
% 
% End of Keywords
%

\medskip

{\bf Mathematics Subject Classification:} %
Primary 60K25; Secondary 60J10
}%\samllsize ends
\end{tabular}

\end{center}

%%%%%%%%%%%%%%%%%%%%%%%%%%%%% Begin %%%%%%%%%%%%%%%%%%%%%%%%%%%%%%%%%%%%%
\section{Introduction}\label{introduction}

This paper studies the subexponential asymptotics of the stationary
distribution of a GI/GI/1-type Markov chain (see, e.g.,
\citealt{He14}) without jumps from level ``infinity" to level zero.
For simplicity, we call such Markov chains {\it GI/GI/1-type Markov
  chains without disasters} because they are often used to analyze
semi-Markovian queues without ``disasters", which are negative
customers who remove all the customers in the system (including
themselves) on their arrivals. It should be noted that every
M/G/1-type Markov chain is a GI/GI/1-type Markov chain without
disasters (see, e.g., \citealt{He14}).

Several researchers have studied the subexponential asymptotics of the
stationary distributions of GI/GI/1-type Markov chains (including
M/G/1-type ones).  \citet{AsmuMoll99} derive subexponential asymptotic
formulas for the stationary distribution of a M/GI/1-type Markov chain
with subexponential level increments. \citet{Li05a} study a
GI/GI/1-type Markov chain with subexponential level increments, though
some of their asymptotic formulas are incorrect (for details, see
\citealt{Masu11}). \citet{Taki04} presents a subexponential asymptotic
formula for M/GI/1-type Markov chains, under the assumption that the
integrated tail distribution of level increments is subexponential. It
should be noted that \citet{Taki04}'s assumption does not necessarily
imply the subexponentiality of level increments themselves (see, e.g.,
Remark~3.5 in \citealt{Sigm99}). Focusing on the period of the
$G$-matrix, \citet{Masu11} establishes sufficient conditions for the
subexponential asymptotics for M/GI/1-type Markov chains, which are
weaker than those presented in the literature
(\citealt{AsmuMoll99,Li05a,Taki04}), except for being limited to the
M/G/1-type Markov chain. \citet{Masu11} also points out that
\citet{Taki04}'s derivation of the asymptotic formula implicitly
assumes the aperiodicity of the $G$-matrix. \citet{Kim12} weaken
\citet{Masu11}'s sufficient condition in the case where the $G$-matrix
is periodic. \citet{Kimu13} present a comprehensive study on the
subexponential asymptotics of GI/GI/1-type Markov chains. They study
the {\it locally} subexponential asymptotics
(\citealt{AsmuFossKors03}) as well as the (ordinarily) subexponential
asymptotics. The sufficient conditions presented in \citet{Kimu13} are
weaker than those reported in the literature mentioned above.

The main result of this paper is to present a new sufficient condition
for the subexponential asymptotics of the stationary distribution of a
GI/GI/1-type Markov chain without disasters. This sufficient condition
is weaker than the corresponding one presented in \citet{Kimu13}.

In this paper, we demonstrate the application of the main result to
the stationary queue length distribution in the (standard) BMAP/GI/1 queue (see, e.g., \citealt{Luca91}).  According to \citet{Taki00}, the stationary
queue length distribution in the BMAP/GI/1 queue is equivalent to the
stationary distribution of a certain M/G/1-type Markov
chain. Combining this fact and the main result of this paper, we
derive four subexponential asymptotic formulas for the stationary
queue length distribution. Two of the four formulas are proved under
weaker conditions than the two corresponding ones presented in
\citet{Masu09}; and the other two formulas are shown for a BMAP/GI/1
queue with consistently varying service times, which is not considered
in \citet{Masu09}. 

We also apply the main result of this paper to a single-server queue with Markovian arrivals and the
$(a,b)$-bulk-service rule, denoted by MAP/${\rm GI}^{(a,b)}$/1 queue (see, e.g., \citealt{Sing13}). For the MAP/${\rm GI}^{(a,b)}$/1 queue, we construct a GI/GI/1-type Markov chain without disasters by observing the queue length process at departure points. Thus using the main result, we obtain a subexponential asymptotic
formula for the stationary queue length distribution at departure points.
Combining the obtained formula with the relationship between the stationary queue length distribution at departure points and that at an arbitrary time point, we have a subexponential asymptotic
formula for the stationary queue length distribution at an arbitrary time point.

The rest of this paper is divided into four
sections. Section~\ref{sec-preliminary} provides basic definitions,
notation and preliminary results. Section~\ref{sec-main-results}
presents the main result of this paper. Sections~\ref{sec-app-01} and \ref{sec-app-02} discuss the applications of the main result.

\section{Preliminaries}\label{sec-preliminary}

\subsection{Basic definitions and notation}

Let $\bbZ = \{0,\pm 1,\pm 2,\dots\}$, $\bbZ_+ = \{0,1,2,\dots\}$ and
$\bbN = \{1,2,3,\dots\}$, respectively. For any distribution function
$F$ on $\bbR_+:=[0,\infty)$, let $\overline{F} = 1 - F$ and $F_{\rme}$
  denote the equilibrium distribution function of $F$, i.e.,
  $F_{\rme}(x) = \int_0^x \overline{F}(y) \rmd y / \int_0^{\infty}
  \overline{F}(y) \rmd y$ for $x \ge 0$, which is well-defined if $F$
  has a positive finite mean. For any nonnegative random variable $Y$
  with positive finite mean, let $Y_{\rme}$ denote the equilibrium
  random variable of $Y$ such that
\[
\PP(Y_{\rme} \le x) = {1 \over \EE[Y]} \int_0^x\PP(Y > y)\rmd y, 
\qquad x \in \bbZ_+;
\]
and $Y_{\rm de} = \lfloor Y_{\rme} \rfloor$, which is called the
discretized equilibrium random variable of $Y$. If $Y$ is nonnegative
integer-valued, then
\[
\PP(Y_{\rm de} = k) = {1 \over \EE[Y]} \PP(Y > k), \qquad k \in \bbZ_+.
\]

We now define $\vc{e}$ and $\vc{I}$ as the column vector of ones and
the identity matrix, respectively, with appropriate dimensions
according to the context.  The superscript ``$\rmt$" represents the
transpose operator for vectors and matrices.  The notation
$[\,\cdot\,]_{i,j}$ (rep.\ $[\,\cdot\,]_i$) denotes the $(i,j)$th
(resp.\ $i$th) element of the matrix (resp.\ vector) in the square
brackets.

For any matrix sequence $\{\vc{M}(k);k\in\bbZ\}$, let
$\overline{\vc{M}}(k) = \sum_{l=k+1}^{\infty}\vc{M}(l)$ and
$\ooverline{\vc{M}}(k) = \sum_{l=k+1}^{\infty}\overline{\vc{M}}(l)$
for $k\in\bbZ$. For any two matrix sequences
$\{\vc{M}(k);k\in\bbZ\}$ and $\{\vc{N}(k);k\in\bbZ\}$ such that
their products are well-defined, let $\{\vc{M} \ast
\vc{N}(k);k\in\bbZ\}$ denote the convolution of $\{\vc{M}(k)\}$ and
$\{\vc{N}(k)\}$, i.e.,
\[
\vc{M} \ast \vc{N}(k) 
= \sum_{l\in\bbZ} \vc{M}(k-l) \vc{N}(l)
= \sum_{l\in\bbZ} \vc{M}(l) \vc{N}(k-l),\qquad k\in\bbZ.
\]
In addition, for any square matrix sequence $\{\vc{M}(k);k\in\bbZ\}$,
let $\{\vc{M}^{\ast n}(k);k\in\bbZ\}$ ($n\in\bbN$) denote the $n$-fold
convolution of $\{\vc{M}(k)\}$ with itself, i.e.,
\[
\vc{M}^{\ast n}(k) = \sum_{l\in\bbZ}
\vc{M}^{\ast (n-1)}(k-l) \vc{M}(l),
\qquad k \in \bbZ,
\]
where $\vc{M}^{\ast0}(0) = \vc{I}$ and $\vc{M}^{\ast0}(k) = \vc{O}$
for $k \in \bbZ\setminus\{0\}$.

Finally, for simplicity, we may write $\vc{Z}(x) = o(f(x))$ and
$\vc{Z}(x) \simhm{x} \widetilde{\vc{Z}} f(x)$ to represent
\[
\lim_{x\to\infty}{\vc{Z}(x) \over f(x)} = \vc{O},
\qquad
\lim_{x\to\infty}{\vc{Z}(x) \over f(x)} = \widetilde{\vc{Z}},
\]
respectively.

The above definitions and notation for matrices are applied to vectors
and scalars in an appropriate manner.

\subsection{Stationary distribution of GI/G/1-type Markov chain}

Let $\bbM_0 = \{1,2,\dots,M_0\}$ and $\bbM = \{1,2,\dots,M\}$, where
$M_0,M\in\bbN$. We then define $\{(X_n,S_n);n\in\bbZ_+\}$ as a Markov
chain with state space $\bbF:=(\{0\} \times \bbM_0) \cup (\bbN \times
\bbM)$ and transition probability matrix $\vc{T}$, which is given by
\begin{equation}
\vc{T} = 
\left(
\begin{array}{ccccc}
\vc{B}(0)  & \vc{B}(1)  & \vc{B}(2)  & \vc{B}(3) & \cdots
\\
\vc{B}(-1) & \vc{A}(0)  & \vc{A}(1)  & \vc{A}(2) & \cdots
\\
\vc{B}(-2) & \vc{A}(-1) & \vc{A}(0)  & \vc{A}(1) & \cdots
\\
\vc{B}(-3) & \vc{A}(-2) & \vc{A}(-1) & \vc{A}(0) & \cdots
\\
\vdots     & \vdots     &  \vdots    & \vdots    & \ddots
\end{array}
\right),
\label{defn-T}
\end{equation}
where $\vc{B}(0)$ and $\vc{A}(0)$ in the diagonal blocks are $M_0
\times M_0$ and $M \times M$ matrices, respectively. Each element of
$\vc{T}$ is specified by two nonnegative integers $(k,i) \in \bbF$,
where the first variable $k$ is called {\it level} and the second one
$i$ is called {\it phase}.

Throughout this paper, we make the following assumption:
\begin{assumpt}\label{assu-1}
(i) $\vc{T}$ is irreducible and stochastic; (ii) $\sum_{k=1}^{\infty} k
  \vc{B}(k) \vc{e} < \infty$; (iii) $\vc{A}:=\sum_{k\in\bbZ} \vc{A}(k)$
  is irreducible and stochastic; (iv) $\sum_{k\in\bbZ} |k| \vc{A}(k) <
  \infty$; (v) $\sigma := \vc{\pi} \sum_{k\in\bbZ} k \vc{A}(k) \vc{e}
  < 0$, where $\vc{\pi}:=(\pi_i)_{i\in\bbM}$ is the stationary
  probability vector of $\vc{A} := \sum_{k\in\bbZ} \vc{A}(k)$.
\end{assumpt}

\begin{rem}\label{rem-assu-1}
$\vc{T}$ is positive recurrent if and only if $\sigma < 0$ and
  $\sum_{k=1}^{\infty} k\vc{B}(k)\vc{e} < \infty$, provided that
  $\vc{T}$ and $\vc{A}$ are irreducible and stochastic (see, e.g.,
  \citealt[Chapter~XI, Proposition~3.1]{Asmu03}). Therefore
  Assumption~\ref{assu-1} is equivalent to condition~(I) of
  Assumption~2 in \citet{Kimu13}.
\end{rem}

\begin{rem}
For $k \in \bbN$, we have $\vc{B}(-k)\vc{e} +
\sum_{l=-k+1}^{\infty}\vc{A}(l)\vc{e}= \vc{e}$.  Thus
condition (iii) of Assumption~\ref{assu-1} implies
$\lim_{k\to\infty}\vc{B}(-k) = \vc{O}$, which shows that the one-step
transition probability from level ``infinity" to level zero is equal
to zero, i.e., no ``disasters" happen in the context of queueing models.
\end{rem}

Let $\vc{x} := (\vc{x}(0),\vc{x}(1),\vc{x}(2),\dots)$ denote the
unique stationary probability vector of $\vc{T}$, where $\vc{x}(0)$
(resp.\ $\vc{x}(k)$; $k\in\bbN$) is a $1 \times M_0$ (resp.\ $1 \times
M$) subvector of $\vc{x}$ corresponding to level zero (resp.\ level
$k$).  To characterize $\vc{x} =
(\vc{x}(0),\vc{x}(1),\vc{x}(2),\dots)$, we introduce $R$-matrices. Let
$\vc{R}_0(k)$ and $\vc{R}(k)$ ($k \in \bbN$) denote $M_0 \times M$ and
$M \times M$ matrices, respectively, such that
\[
[\vc{R}_0(k)]_{i,j}
= \EE\left[ \sum_{n=1}^{T_{< k}} \dd{1}(X_n = k, S_n = j) 
\mid X_0 = 0, S_0 = i \right],
\]
and for any fixed $\nu\in\bbN$,
\[
\phantom{}[\vc{R}(k)]_{i,j}
= \EE\left[ \sum_{n=1}^{T_{< k+\nu}} \dd{1}(X_n = k+\nu, S_n = j) 
\mid X_0 = \nu, S_0 = i \right],
\]
where $T_{<k} = \inf\{n \in \bbN; X_n < k \le X_m~(m=1,2,\dots,n-1)\}$
and $\dd{1}(\cdot)$ denotes the indicator function of the event in the
parentheses. For convenience, let $\vc{R}_0(0) = \vc{O}$ and
$\vc{R}(0) = \vc{O}$. It then follows (see, e.g.,
\citealt{Kimu13,Li05a}) that
\[
\vc{x}(k) = \vc{x}(0) \vc{R}_0 * \vc{F}(k),\qquad k \in \bbN,
%\label{eqn-x(k)-convolution}
\]
where
\begin{equation}
\vc{F}(k) = \sum_{n=0}^{\infty}\vc{R}^{\ast n}(k),
\qquad k\in\bbZ_+.
\label{def-F(k)}
\end{equation}
Thus we have
\begin{equation}
\overline{\vc{x}}(k) =
\vc{x}(0) \overline{ \vc{R}_0 * \vc{F}}(k), \qquad k\in\bbZ_+,
\label{eq-xk-R0-Gamma}
\end{equation}
and especially,
\begin{equation}
\overline{\vc{x}}(0) = 
\vc{x}(0) \vc{R}_0 (\vc{I} - \vc{R} )^{-1},
\label{eq-bar-x_0-R}
\end{equation}
where $\vc{R} = \sum_{k=1}^{\infty}\vc{R}(k)$ and $\vc{R}_0 =
\sum_{k=1}^{\infty}\vc{R}_0(k)$.

For the discussion in the next section, we need some more definitions
and preliminary results. Let $\vc{G}(k)$ ($k \in \bbN$) denote an $M
\times M$ matrix such that for any fixed $\nu \in \bbN$,
\[
[\vc{G}(k)]_{i,j}
= \PP(X_{T_{<k+\nu}} = \nu, S_{T_{<k+\nu}} = j \mid X_0 = k+\nu, S_0 = i),
\qquad k \in \bbN.
\]
Let $\vc{\Phi}(0)$ denote an $M \times M$ matrix such that for any
fixed $\nu \in \bbN$,
\[
[\vc{\Phi}(0)]_{i,j}
= \PP(S_{T_{\downarrow \nu}} = j \mid X_0 = \nu, S_0 = i),
\]
where $T_{\downarrow \nu} = \inf\{n \in \bbN; X_n = \nu <
X_m~(m=1,2,\dots,n-1)\}$. Note here that
$\sum_{n=0}^{\infty}(\vc{\Phi}(0))^n=(\vc{I} - \vc{\Phi}(0))^{-1}$
exists because $\vc{T}$ is irreducible.  Since Assumption~\ref{assu-1}
is equivalent to condition~(I) of Assumption~2 in \citet{Kimu13} (see
Remark~\ref{rem-assu-1}), we have the following result:
\begin{prop}[\citealt{Kimu13}, Lemma~3.1.1]\label{prop-sigma}
Under Assumption~\ref{assu-1}, 
\[
\sigma = -
\vc{\pi}
(\vc{I} - \vc{R} ) (\vc{I} - \vc{\Phi}(0)) 
\sum_{k=1}^{\infty}k\vc{G}(k) \vc{e} \in (-\infty,0).
\]
\end{prop}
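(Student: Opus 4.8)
The plan is to pass to matrix generating functions and exploit a Wiener--Hopf-type factorization of $\vc{I} - \vc{A}^{\ast}(z)$, where $\vc{A}^{\ast}(z) = \sum_{k\in\bbZ}\vc{A}(k)z^k$, $\vc{R}^{\ast}(z) = \sum_{k=1}^{\infty}\vc{R}(k)z^k$ and $\vc{G}^{\ast}(z) = \sum_{k=1}^{\infty}\vc{G}(k)z^k$. Reading the repeating blocks $\{\vc{A}(k)\}$ as the increment kernel of a Markov-modulated random walk, a first-passage decomposition of each excursion into its ascending-ladder, same-level-taboo and descending-ladder parts should yield the factorization
\[
\vc{I} - \vc{A}^{\ast}(z) = [\vc{I} - \vc{R}^{\ast}(z)]\,[\vc{I} - \vc{\Phi}(0)]\,[\vc{I} - \vc{G}^{\ast}(1/z)],
\]
valid in a neighbourhood of $z=1$, with the \emph{constant} middle factor $\vc{I}-\vc{\Phi}(0)$ recording the same-level taboo excursions defined through $T_{\downarrow\nu}$. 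First I would establish this identity and check that, under Assumption~\ref{assu-1}, the three series converge near $z=1$ and are differentiable there; conditions (ii) and (iv) are exactly what make $\vc{R} = \vc{R}^{\ast}(1)$, $\sum_{k}k\vc{G}(k)\vc{e}$ and $\sum_{k}k\vc{A}(k)$ finite.

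The second step extracts the drift by differentiating the factorization at $z=1$. By positive recurrence (Remark~\ref{rem-assu-1}) the matrix $\vc{G} = \vc{G}^{\ast}(1)$ is stochastic, so $(\vc{I}-\vc{G})\vc{e} = \vc{0}$, and $\vc{A} = \vc{A}^{\ast}(1)$ is stochastic. Differentiating both sides with respect to $z$ and evaluating at $z=1$, the derivative of $\vc{I}-\vc{A}^{\ast}(z)$ is $-\sum_{k\in\bbZ}k\vc{A}(k)$ while the derivative of $\vc{I}-\vc{G}^{\ast}(1/z)$ is $\sum_{k=1}^{\infty}k\vc{G}(k)$. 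Sandwiching the resulting identity between $\vc{\pi}$ on the left and $\vc{e}$ on the right, the left-hand side becomes $-\vc{\pi}\sum_{k\in\bbZ}k\vc{A}(k)\vc{e} = -\sigma$ by the definition of $\sigma$. On the right-hand side the product rule gives three terms: the term differentiating $\vc{I}-\vc{R}^{\ast}(z)$ vanishes because it ends in $(\vc{I}-\vc{G})\vc{e} = \vc{0}$; the term differentiating the constant factor $\vc{I}-\vc{\Phi}(0)$ vanishes identically; and the surviving term equals $\vc{\pi}(\vc{I}-\vc{R})(\vc{I}-\vc{\Phi}(0))\sum_{k=1}^{\infty}k\vc{G}(k)\vc{e}$. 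Equating the two sides yields $\sigma = -\vc{\pi}(\vc{I}-\vc{R})(\vc{I}-\vc{\Phi}(0))\sum_{k=1}^{\infty}k\vc{G}(k)\vc{e}$, and $\sigma\in(-\infty,0)$ then follows from the finiteness in (iv) together with the negativity in (v).

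I expect the main obstacle to be the first step: rigorously establishing the factorization with the middle factor identified \emph{precisely} as $\vc{I}-\vc{\Phi}(0)$, and justifying the term-by-term differentiation at $z=1$. A useful cross-check is the skip-free (M/GI/1) special case, where $\vc{G}^{\ast}(1/z)=\vc{G}/z$ and matching the $z^{-1}$-coefficients reduces the factorization to the familiar relation $\vc{A}(-1) = (\vc{I}-\vc{\Phi}(0))\vc{G}$ with $\vc{\Phi}(0)=\sum_{k\ge0}\vc{A}(k)\vc{G}^{k}$; this confirms that the paper's $\vc{\Phi}(0)$ is indeed the same-level taboo matrix appearing in the factorization. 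Once the factorization and the required analyticity near $z=1$ are in hand, the differentiation argument is routine.
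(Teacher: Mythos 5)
The paper does not prove this proposition---it is imported verbatim from \citet{Kimu13} (their Lemma~3.1.1)---so there is no in-paper argument to compare against; judged on its own, your plan is sound and is in fact the standard route to this identity (and essentially the one used in the cited source). The factorization $\vc{I}-\vc{A}^{\ast}(z)=(\vc{I}-\vc{R}^{\ast}(z))(\vc{I}-\vc{\Phi}(0))(\vc{I}-\vc{G}^{\ast}(1/z))$ is the known RG-factorization for GI/G/1-type chains, your identification of $\vc{\Phi}(0)$ as the same-level return matrix under the taboo of lower levels matches the paper's definition via $T_{\downarrow\nu}$, the M/G/1 cross-check $\vc{A}(-1)=(\vc{I}-\vc{\Phi}(0))\vc{G}$ is correct, and the product-rule computation sandwiched between $\vc{\pi}$ and $\vc{e}$ does kill the $\vc{R}$-derivative term because $\vc{G}\vc{e}=\vc{e}$ under the negative drift of condition (v). Two points deserve more care than your sketch gives them. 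First, the finiteness of $\sum_{k}k\vc{G}(k)\vc{e}$ is not supplied by conditions (ii) and (iv) as you state---(ii) concerns the boundary blocks $\vc{B}(k)$ and is irrelevant here; what you need is the descending-ladder-height fact that a Markov-modulated random walk with negative drift and $\sum_{k}|k|\vc{A}(k)<\infty$ has finite mean ladder height, which is a consequence of (iv) and (v) but requires its own (standard) argument. Second, $z=1$ sits on the boundary of convergence of all three factors ($\vc{A}^{\ast}(z)$ need not exist off the unit circle, $\vc{R}^{\ast}(z)$ only for $|z|\le1$, $\vc{G}^{\ast}(1/z)$ only for $|z|\ge1$), so ``differentiate at $z=1$'' must be replaced by an Abelian limit along the unit circle or by a direct manipulation of the coefficient identities $\vc{A}(k)=\sum$(products of $\vc{R}$, $\vc{\Phi}(0)$, $\vc{G}$ coefficients) followed by summation against $k$; this is routine but is exactly where a careless version of the argument would be incomplete.
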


Let $\vc{L}(k)$ ($k \in \bbN$) denote an $M \times M$ matrix such that
for any fixed $\nu \in \bbN$,
\[
[\vc{L}(k)]_{i,j}
=\PP(S_{T_{\downarrow \nu}} = j \mid X_0 = k+\nu, S_0 = i),
\qquad k \in \bbN.
\]
We then have
\[
\vc{L}(k) 
= \sum_{m=1}^k \vc{G}^{\ast m}(k),
\qquad k \in \bbN.
\]
In terms of $\vc{L}(k)$, the matrices
$\vc{R}_0(k)$ and $\vc{R}(k)$ are expressed as
\begin{align}
&&
\vc{R}_0(k)
&=
\left[
\vc{B}(k) + \sum_{m=1}^{\infty} \vc{B}(k+m) \vc{L}(m) \right]
(\vc{I} - \vc{\Phi}(0))^{-1},
& k &\in \bbN,
&&
\nonumber
%\label{eqn-R0-B}
\\
&&
\vc{R}(k) 
&=
\left[
\vc{A}(k)  + \sum_{m=1}^{\infty} \vc{A}(k+m) \vc{L}(m)\right]
(\vc{I} - \vc{\Phi}(0))^{-1}, 
& k &\in \bbN.
&&
\label{eqn-R-A}
\end{align}

The following proposition is used to prove Lemma~\ref{lem-AD-BD} in
the next section.
\begin{prop}[\citealt{Kimu13}, Lemma~3.1.2]\label{prop-lim-L(ntau+l)}
If Assumption~\ref{assu-1} holds, then
\[
\lim_{n\to \infty} \sum_{l=0}^{\tau-1}\vc{L}(n\tau+l)
= \tau \vc{e} \vc{\psi},
%\label{eqn-lim-sum-D(ntau+l)}
\]
where 
\begin{equation}
\vc{\psi} = \vc{\pi}(\vc{I} - \vc{R} ) (\vc{I} - \vc{\Phi}(0))/(-\sigma),
\label{defn-psi}
\end{equation}
and $\tau$ denotes the period of an Markov
additive process with kernel $\{\vc{A}(k);k\in\bbZ\}$ (see Appendix~B
in \citealt{Kimu10}).
\end{prop}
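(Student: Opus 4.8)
The plan is to recognize $\{\vc{L}(k)\}$ as a Markov renewal measure generated by the kernel $\{\vc{G}(k)\}$ and to apply a key renewal theorem for lattice Markov additive processes, matching the resulting constant with $\vc{\psi}$ by means of Proposition~\ref{prop-sigma}. First I would record the structural facts about $\vc{G}:=\sum_{k=1}^{\infty}\vc{G}(k)$. Under Assumption~\ref{assu-1} the chain $\vc{T}$ is positive recurrent (Remark~\ref{rem-assu-1}), so the first passage from level $k+\nu$ down to level $\nu$ occurs almost surely; hence $\vc{G}$ is stochastic, and it inherits irreducibility from $\vc{A}$. Since each $\vc{G}(k)$ is supported on $k\ge 1$, the convolution $\vc{G}^{\ast m}(k)$ vanishes for $m>k$ and $\vc{G}^{\ast 0}(k)=\vc{O}$ for $k\ge 1$; therefore, for every $k\in\bbN$,
\[
\vc{L}(k)=\sum_{m=1}^{k}\vc{G}^{\ast m}(k)=\sum_{m=0}^{\infty}\vc{G}^{\ast m}(k),
\]
so that $\{\vc{L}(k)\}$ is exactly the Markov renewal measure of the Markov additive increment kernel $\{\vc{G}(k);k\in\bbN\}$, whose driving phase process has transition matrix $\vc{G}$.

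Next I would invoke the key renewal theorem for an irreducible, positive recurrent, lattice Markov additive process of period $\tau$. Let $\vc{\kappa}$ denote the unique stationary probability vector of $\vc{G}$ (so $\vc{\kappa}\vc{G}=\vc{\kappa}$, $\vc{\kappa}\vc{e}=1$) and let $\mu=\vc{\kappa}\sum_{k=1}^{\infty}k\vc{G}(k)\vc{e}$ be the mean increment, which is finite and positive by Assumption~\ref{assu-1} together with Proposition~\ref{prop-sigma}. The periodic lattice Markov renewal theorem then yields
\[
\lim_{n\to\infty}\sum_{l=0}^{\tau-1}\vc{L}(n\tau+l)=\frac{\tau}{\mu}\,\vc{e}\,\vc{\kappa},
\]
the factor $\tau/\mu$ being confirmed by the scalar special cases (a deterministic increment equal to $\tau$, or span-one aperiodic increments). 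It then remains only to identify $\vc{\kappa}/\mu$ with $\vc{\psi}$.

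The main obstacle is this identification, which amounts to proving that $\vc{v}:=\vc{\pi}(\vc{I}-\vc{R})(\vc{I}-\vc{\Phi}(0))$ is left-invariant for $\vc{G}$, i.e.\ $\vc{v}\vc{G}=\vc{v}$. Probabilistically, a suitably normalized $\vc{v}$ should be the stationary phase distribution of the descending-ladder chain whose one-step law is $\vc{G}$; I would establish $\vc{v}\vc{G}=\vc{v}$ either from the matrix-analytic relations linking $\vc{R}$, $\vc{\Phi}(0)$ and $\vc{G}$, or by a stationary/censoring argument applied to $\{(X_n,S_n)\}$ observed at successive down-crossings of a fixed level. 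Granting $\vc{v}\vc{G}=\vc{v}$, we obtain $\vc{\kappa}=\vc{v}/(\vc{v}\vc{e})$, while Proposition~\ref{prop-sigma} gives $-\sigma=\vc{v}\sum_{k=1}^{\infty}k\vc{G}(k)\vc{e}=\mu\,(\vc{v}\vc{e})$. Hence
\[
\frac{\vc{\kappa}}{\mu}=\frac{\vc{v}}{\mu(\vc{v}\vc{e})}=\frac{\vc{v}}{-\sigma}=\vc{\psi},
\]
and the limit equals $\tau\vc{e}\vc{\psi}$, as asserted.

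Finally I would confirm that the period of the $\{\vc{G}(k)\}$-driven process coincides with the period $\tau$ of the $\{\vc{A}(k)\}$-driven process in the sense of Appendix~B of \citet{Kimu10}; this is what legitimizes using the same $\tau$ throughout. I expect the verification of the invariance $\vc{v}\vc{G}=\vc{v}$ and the careful bookkeeping of the period correspondence to be the delicate parts, the renewal-theoretic input being otherwise standard.
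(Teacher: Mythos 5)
The paper offers no proof of this proposition: it is imported verbatim from Kimura et al.\ (2013), Lemma~3.1.2, so there is nothing internal to compare your argument against. That said, your outline follows the natural (and presumably the cited source's) route: write $\vc{L}(k)=\sum_{m\ge 0}\vc{G}^{\ast m}(k)$ as the Markov renewal measure of the ladder kernel $\{\vc{G}(k)\}$, apply the lattice Markov renewal theorem with period $\tau$, and identify the normalizing vector with $\vc{\psi}$ via Proposition~\ref{prop-sigma}.

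However, two steps that carry essentially all of the content are left as placeholders. First, the identification hinges entirely on the invariance $\vc{\pi}(\vc{I}-\vc{R})(\vc{I}-\vc{\Phi}(0))\,\vc{G}=\vc{\pi}(\vc{I}-\vc{R})(\vc{I}-\vc{\Phi}(0))$, which you yourself flag as the ``main obstacle'' and then simply grant; without it the constant $\tau\vc{e}\vc{\psi}$ is not obtained, and establishing it requires the RG-factorization identities linking $\vc{R}$, $\vc{\Phi}(0)$ and $\vc{G}$ (or a censoring argument at down-crossing epochs) that you only gesture at. Second, the claim that $\vc{G}$ ``inherits irreducibility from $\vc{A}$'' is not a valid inference: even when $\vc{A}$ is irreducible, $\vc{G}$ can fail to be irreducible (for instance, if downward level transitions can only land in a proper subset of phases, $\vc{G}$ has identically zero columns). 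So the Markov renewal theorem you invoke must be the version for a kernel whose phase chain has a single recurrent class, and the uniqueness of $\vc{\kappa}$, together with the coincidence of the period of the $\vc{G}$-driven process with the period $\tau$ of the $\vc{A}$-driven process, each need separate justification. As a reconstruction of the cited proof your plan points in the right direction, but as a proof it is incomplete at precisely the points where the work lies.
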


\begin{rem}
Proposition~\ref{prop-sigma} implies that $\vc{\psi}$ is finite.
\end{rem}

\subsection{Long-tailed distributions}\label{appen-heavy-tailed}

We begin with the definitions of the long-tailed class and
higher-order long-tailed classes.
\begin{defn}\label{defn-long-tailed}
A nonnegative random variable $U$ and its distribution $F_U$ are said
to be long-tailed if $\PP(U>x) > 0$ for all $x \ge 0$ and $\PP(U>x+y)
\simhm{x} \PP(U>x)$ for some (thus all) $y > 0$. The class of
long-tailed distributions is denoted by $\calL$.
\end{defn}

\begin{defn}\label{defn-higher-order-L}
A nonnegative random variable $U$ and its distribution $F_U$ are said
to be the $\mu$ th-order long-tailed if $U^{1/\mu} \in \calL$, where
$\mu \ge 1$. The class of the $\mu$th-order long-tailed distributions
is denoted by $\calL^{\mu}$. Further if $U \in \calL^{\mu}$
(resp.\ $F_U \in \calL^{\mu}$) for all $\mu \ge 1$, we write $U \in
\calL^{\infty}$ (resp.\ $F_U \in \calL^{\infty}$) and call $U$
(resp.\ $F_U$) infinite-order long-tailed.
\end{defn}

The basic properties of the higher-order long-tailed classes
(including the long-tailed class) are summarized in
Proposition~\ref{rem-higher-order-L} below.
\begin{prop}[\citealt{Masu13}, Lemmas~A.1--A.3]\label{rem-higher-order-L}
\hfill
\begin{enumerate}
\item $\calL^{\mu_2} \subset \calL^{\mu_1}$ for $1 \le \mu_1 < \mu_2$.
\item If $U \in \calL^{\mu}$ ($\mu\ge1$), then $\PP(U > x)
= \exp\{-o(x^{1/\mu})\}$.
\item $U \in \calL^{\mu}$ ($\mu\ge1$) if and only if $\PP(U > x - \xi
  x^{1-1/\mu}) \simhm{x} \PP(U > x)$ for some (thus all) $\xi \in \bbR
  \backslash \{0\}$.
\end{enumerate}
\end{prop}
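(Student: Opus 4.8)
The plan is to treat part (iii) as the technical core, to derive part (i) from it by a monotonicity sandwich, and to prove part (ii) by a separate elementary argument. Throughout set $V = U^{1/\mu}$, so that by definition $U \in \calL^\mu$ is the same as $V \in \calL$, and $\PP(V > s) = \PP(U > s^\mu)$ for $s \ge 0$. The one structural fact I would lean on repeatedly is that the survival function $s \mapsto \PP(V > s)$ is non-increasing, which upgrades the defining pointwise shift-invariance of $\calL$ to uniform convergence on compact shift-sets: if $V \in \calL$ and $c_s \to c \in \bbR$, then $\PP(V > s + c_s) \simhm{s} \PP(V > s)$, since one can squeeze $\PP(V > s + c_s)$ between $\PP(V > s + c + \eta)$ and $\PP(V > s + c - \eta)$ and let $\eta \downarrow 0$.

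For part (iii) I would change variables by $x = s^\mu$ and Taylor-expand: because $(s^\mu - \xi s^{\mu-1})^{1/\mu} = s(1 - \xi/s)^{1/\mu} = s - \xi/\mu + O(1/s)$, the event $\{U > x - \xi x^{1-1/\mu}\}$ becomes $\{V > s - \xi/\mu + O(1/s)\}$ while $\{U > x\}$ becomes $\{V > s\}$. For the forward implication, $V \in \calL$ together with the uniform-on-compacts statement above immediately gives $\PP(U > x - \xi x^{1-1/\mu}) \simhm{x} \PP(U > x)$ for \emph{every} $\xi \neq 0$, which also settles the nontrivial ``for some, thus all'' clause in this direction. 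For the converse, assume the relation for a single $\xi_0 \neq 0$; translated to $V$ it reads $\PP(V > s - \xi_0/\mu + O(1/s)) \simhm{s} \PP(V > s)$. Writing $a = \xi_0/\mu$ and picking any $\eta \in (0,|a|)$, I sandwich the perturbed argument between $s - a - \eta$ and $s - a + \eta$ to conclude $\PP(V > s + b) \simhm{s} \PP(V > s)$ for a single genuine constant shift $b \neq 0$; the ``for some, thus all'' property built into the definition of $\calL$ then yields $V \in \calL$, i.e. $U \in \calL^\mu$.

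Part (i) is then almost immediate from the characterization in (iii). For $1 \le \mu_1 < \mu_2$ and $\xi > 0$ one has $t^{1-1/\mu_1} < t^{1-1/\mu_2}$ for large $t$, so $t - \xi t^{1-1/\mu_1}$ lies between $t - \xi t^{1-1/\mu_2}$ and $t$; monotonicity of $\PP(U > \cdot)$ then traps the ratio $\PP(U > t - \xi t^{1-1/\mu_1})/\PP(U > t)$ between $1$ and $\PP(U > t - \xi t^{1-1/\mu_2})/\PP(U > t)$, and the latter tends to $1$ because $U \in \calL^{\mu_2}$. Thus the relation in (iii) holds for every $\xi > 0$, which by (iii) already forces $U \in \calL^{\mu_1}$. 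For part (ii), set $g(s) = -\log \PP(V > s)$, which is finite by the positivity in the definition of $\calL$, non-decreasing, and satisfies $g(s+1) - g(s) \to 0$ since $V \in \calL$. A telescoping/Cesàro argument then forces $g(s)/s \to 0$, i.e. $\PP(V > s) = \exp\{-o(s)\}$; substituting $s = x^{1/\mu}$ gives $\PP(U > x) = \exp\{-o(x^{1/\mu})\}$.

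The main obstacle is the reverse implication in (iii): the change of variables never produces an exact constant shift in the $V$-scale but only a constant-plus-$O(1/s)$ shift, so one cannot invoke the definition of $\calL$ verbatim. The whole difficulty is absorbed by using monotonicity of the survival function to sandwich the perturbed shift between honest constant shifts, after which the closure (``for some, thus all'') property of $\calL$ finishes the job. The same monotonicity device is exactly what makes the tail comparison in part (i) go through.
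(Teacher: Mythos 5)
Your argument is correct: the reduction to $V=U^{1/\mu}$, the use of monotonicity of the survival function to absorb the $O(1/s)$ discrepancy between $(x-\xi x^{1-1/\mu})^{1/\mu}$ and an exact constant shift, the sandwich giving (i) from (iii), and the Ces\`aro/telescoping argument for (ii) all go through. Note, however, that the paper itself supplies no proof of this proposition --- it is imported verbatim from \citet{Masu13} (Lemmas A.1--A.3) --- so there is no in-paper argument to compare yours against; your write-up is a valid self-contained derivation of the cited facts.
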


Next we introduce the subexponential class, which is the largest
tractable subclass of $\calL$.
\begin{defn}[\citealt{Gold98,Sigm99}]\label{defn-subexp}
A nonnegative random variable $U$ and its distribution $F_U$ are said
to be subexponential if $\PP(U>x) > 0$ for all $x \ge 0$ and
\[
\PP(U_1 + U_2 > x) \simhm{x} 2 \PP(U>x),
\]
where $U_i$'s ($i=1,2,\dots$) are independent copies of $U$.  The
class of subexponential distributions is denoted by $\calS$.
\end{defn}

\begin{rem}
The class $\calS$ includes Pareto, heavy-tailed Weibull, lognormal,
Burr, and loggamma distributions, etc (see, e.g., \citealt{Gold98}).
\end{rem}

The following proposition is used several times in the subsequent
sections.
\begin{prop}[\citealt{Masu11}, Proposition~A.3]\label{Masu11-prop}
Let $\{\vc{M}(k);k\in\bbZ_+\}$ and $\{\vc{N}(k);k\in\bbZ_+\}$ denote
finite-dimensional nonnegative matrix sequences such that their
convolution $\{\vc{M}\ast\vc{N}(k);k\in\bbZ_+\}$ is well-defined and
$\vc{M}:= \sum_{k=0}^{\infty}\vc{M}(k)$ and $\vc{N}:=
\sum_{k=0}^{\infty}\vc{N}(k)$ are finite. Suppose that for some random
variable $U \in \calS$,
\[
\lim_{k\to\infty}{\overline{\vc{M}}(k) \over \PP(U > k)} 
= \widetilde{\vc{M}} \ge \vc{O},
\qquad
\lim_{k\to\infty}{\overline{\vc{N}}(k) \over \PP(U > k)} 
= \widetilde{\vc{N}} \ge \vc{O},
\]
where $\widetilde{\vc{M}} = \widetilde{\vc{N}} = \vc{O}$ is allowed.
We then have
\[
\lim_{k\to\infty}{ \overline{\vc{M} \ast \vc{N}}(k) \over \PP(U > k) } 
= \widetilde{\vc{M}} \vc{N} + \vc{M} \widetilde{\vc{N}}.
\]
\end{prop}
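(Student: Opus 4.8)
The plan is to reduce the matrix identity to a finite collection of scalar convolution statements and then invoke the classical subexponential convolution-tail theorem. Write $m_{i,r} = \{[\vc{M}(l)]_{i,r}; l \in \bbZ_+\}$ and $n_{r,j} = \{[\vc{N}(l)]_{r,j}; l \in \bbZ_+\}$ for the scalar entry sequences, which are finite nonnegative measures on $\bbZ_+$ with total masses $[\vc{M}]_{i,r}$ and $[\vc{N}]_{r,j}$. Matrix multiplication gives $[\vc{M} \ast \vc{N}(l)]_{i,j} = \sum_{r} (m_{i,r} \ast n_{r,j})(l)$, and since the tail operator commutes with the finite sum over $r$, $[\overline{\vc{M} \ast \vc{N}}(k)]_{i,j} = \sum_{r} \overline{m_{i,r} \ast n_{r,j}}(k)$. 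Moreover $\overline{m_{i,r}}(k) = [\overline{\vc{M}}(k)]_{i,r} \simhm{k} [\widetilde{\vc{M}}]_{i,r}\,\PP(U>k)$ by hypothesis, and likewise for $n_{r,j}$. Hence it suffices to prove the scalar claim: for finite nonnegative sequences $\mu,\nu$ on $\bbZ_+$ with masses $a,b$ and tails $\overline{\mu}(k) \simhm{k} \alpha\,\PP(U>k)$, $\overline{\nu}(k) \simhm{k} \beta\,\PP(U>k)$ (with $\alpha,\beta \ge 0$ and $U \in \calS$), one has $\overline{\mu\ast\nu}(k) \simhm{k} (\alpha b + a\beta)\,\PP(U>k)$; summing this over $r$ then yields $\sum_{r}\bigl([\widetilde{\vc{M}}]_{i,r}[\vc{N}]_{r,j} + [\vc{M}]_{i,r}[\widetilde{\vc{N}}]_{r,j}\bigr) = [\widetilde{\vc{M}}\vc{N} + \vc{M}\widetilde{\vc{N}}]_{i,j}$, the asserted limit.

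For the scalar claim I would first discard the degenerate cases: if $a=0$ or $b=0$ the relevant sequence vanishes and both sides are $0$. When $a,b>0$, normalise to probability distributions $\hat\mu = \mu/a$ and $\hat\nu = \nu/b$, whose tails satisfy $\overline{\hat\mu}(k) \simhm{k} (\alpha/a)\PP(U>k)$ and $\overline{\hat\nu}(k) \simhm{k} (\beta/b)\PP(U>k)$, with nonnegative (possibly zero) coefficients relative to the subexponential $U$. The key input is the standard subexponential convolution-tail theorem: if $U \in \calS$ and distributions $F_1,F_2$ satisfy $\overline{F_s}(k)/\PP(U>k) \to c_s \in [0,\infty)$ for $s=1,2$, then $\overline{F_1 \ast F_2}(k)/\PP(U>k) \to c_1 + c_2$. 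Applying this to $\hat\mu,\hat\nu$ and using the bilinearity $\mu\ast\nu = ab\,(\hat\mu\ast\hat\nu)$ gives $\overline{\mu\ast\nu}(k) \simhm{k} ab\,(\alpha/a + \beta/b)\PP(U>k) = (\alpha b + a\beta)\PP(U>k)$, as required.

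To keep the argument self-contained I would prove the invoked theorem by the usual region decomposition. Choose a truncation $h(k)\to\infty$ with $h(k)=o(k)$ for which $\PP(U>k-h(k)) \simhm{k} \PP(U>k)$, possible since $\calS \subset \calL$ is long-tailed (cf.\ Proposition~\ref{rem-higher-order-L}). Split the exact identity $\overline{F_1\ast F_2}(k) = \sum_{j\ge 0}\overline{F_1}(k-j)\,F_2(\{j\})$ into the ranges $j \le h(k)$, $j \ge k-h(k)$, and the central range $h(k)<j<k-h(k)$. On the first range $\overline{F_1}(k-j)/\PP(U>k) \to c_1$ uniformly by shift-insensitivity, while $\sum_{j\le h(k)}F_2(\{j\}) \to 1$, contributing $c_1$; on the second range $\overline{F_1}(k-j)\to 1$ and $\sum_{j\ge k-h(k)}F_2(\{j\}) = \overline{F_2}(k-h(k)-1) \sim c_2\PP(U>k)$, contributing $c_2$. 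The genuine difficulty, and the only place where subexponentiality is needed beyond long-tailedness, is showing that the central range contributes $o(\PP(U>k))$; this is precisely the subexponential cancellation encoded in $\PP(U_1+U_2>k)\simhm{k}2\PP(U>k)$, which I would make uniform via Kesten's bound $\PP(U_1+\dots+U_n>k) \le C(\varepsilon)(1+\varepsilon)^n\PP(U>k)$ together with the tail equivalence of $F_1,F_2$ to $U$. This central-range estimate is the main obstacle; the remainder is bookkeeping with finite sums and the long-tailed shift property.
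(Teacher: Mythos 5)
The paper does not actually prove this proposition: it is imported verbatim from \citet{Masu11} (Proposition~A.3), so there is no in-paper argument to compare yours against. Judged on its own terms, your reconstruction is correct and follows the natural route: the entrywise reduction $[\overline{\vc{M}\ast\vc{N}}(k)]_{i,j}=\sum_r \overline{m_{i,r}\ast n_{r,j}}(k)$ is exact, the normalisation to probability measures and the bilinearity $\mu\ast\nu=ab\,(\hat\mu\ast\hat\nu)$ correctly convert the scalar convolution-tail theorem into the asserted $\widetilde{\vc{M}}\vc{N}+\vc{M}\widetilde{\vc{N}}$ form, and the degenerate cases $a=0$ or $b=0$ are handled. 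Two points in your sketch of the scalar theorem deserve tightening. First, on the range $j\ge k-h(k)$ the assertion ``$\overline{F_1}(k-j)\to 1$'' is false as stated (for $k-h(k)\le j\le k$ the argument $k-j$ stays in $[0,h(k)]$); the correct justification is a sandwich between $\overline{F_2}(k)$ (keeping only $j>k$, where $\overline{F_1}(k-j)=1$) and $\overline{F_2}(k-h(k)-1)$, both asymptotic to $c_2\,\PP(U>k)$ by long-tailedness. Second, the central range is indeed the crux, but Kesten's bound is the wrong tool here --- it provides uniformity in the number of convolution factors, which you do not need for two fixed distributions; what you do need, and what the sketch glosses over, is that the sum $\sum_{h(k)<j<k-h(k)}\overline{F_1}(k-j)F_2(\{j\})$ involves \emph{point masses} $F_2(\{j\})$, which cannot be dominated by $\PP(U=j)$ merely from the tail hypothesis $\overline{F_2}(k)=O(\PP(U>k))$; a summation-by-parts step converting point masses into tail differences (or an equivalent device) is required before the subexponential cancellation $\PP(U_1+U_2>k)\simhm{k}2\PP(U>k)$ can be applied, including in the case $c_1=c_2=0$. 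These are repairable gaps in a standard argument rather than structural flaws.
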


Finally we describe two subclasses of $\calS$, which are used to apply
the main result of this paper to the BMAP/GI/1 queue in
Section~\ref{sec-app-01}.
\begin{defn}[\citealt{Shne06}]\label{defn-SC'}
A nonnegative random variable $U$ and its distribution function $F_U$
and cumulative hazard function $Q_U:=-\log \overline{F}_U$ belong to
the subexponential concave class $\SC$ with index $\alpha$ ($0 <
\alpha < 1$) if the following hold: (i) $Q_U$ is eventually concave;
(ii) $\log x = o(Q_U(x))$; and (iii) there exist some $x_0 > 0$ such
that $Q_U(x)/x^{\alpha}$ is nonincreasing for all $x \ge x_0$, i.e.,
\[
{Q_U(x) \over Q_U(u)}  \le \left( {x \over u} \right)^{\alpha},
\qquad x \ge u \ge x_0.
%\label{ineqn-Q_X-03}
\]
The subexponential concave class with index $\alpha$ is denoted by
$\SC_{\alpha}$.
\end{defn}

\begin{rem}
$\SC_{\alpha} \subset \calL^{1/\beta}$ for all $0 < \alpha < \beta \le
  1$ (see Lemma A.6 in \citealt{Masu13}). In addition, typical examples
  of $Q_U \in \SC$ are (i) $Q_U(x) = (\log x)^{\gamma}x^{\alpha}$ and
  (ii) $Q_U(x) = (\log x)^{\beta}$, where $0 < \alpha < 1$, $\beta >
  1$ and $\gamma \in \bbR$. See Appendix~A.2 in \citet{Masu13} for
  further remarks.
\end{rem}

\begin{defn}\label{defn-class-C}
 A nonnegative random variable $U$ and its distribution function $F_U$
 belong to the consistent variation class $\calC$ if
 $\overline{F}_U(x) > 0$ for all $x \ge 0$ and
\[
\lim_{v\downarrow1}\liminf_{x\to\infty}
{\overline{F}_U(vx) \over \overline{F}_U(x)}
= 1
~~
\mbox{or equivalently,}~~
\lim_{v\uparrow1}\limsup_{x\to\infty}
{\overline{F}_U(vx) \over \overline{F}_U(x)}
= 1.
\]
\end{defn}

\begin{rem}\label{rem-class-C}
It is known that (i) $\calC \subset \calL^{\infty}$ (see Lemma A.4 in
\citealt{Masu13}); (ii) $\calR \subset \calC \subset \calL \cap \calD
\subset \calS$ where $\calD$ and $\calR$ denote the dominated
variation class and the regular variation class, respectively (see,
e.g., the introduction of \citealt{Ales08}).
\end{rem}

\section{Main Result}\label{sec-main-results}

Before presenting the main result, we first show a related result.
\begin{prop}[\citealt{Kimu13}, Theorem~3.1.1]\label{prop-thm3.1-Kimu13}
Suppose that (i) Assumption~\ref{assu-1} is satisfied; and (ii) there
exists some random variable $U$ in $\bbZ_+$ with positive finite mean
such that $U_{\rm de} \in \calS$ and
\begin{equation}
\lim_{k \to \infty} {\overline{\vc{A}}(k)\vc{e} \over \PP(U > k)}
= { \vc{c}_A \over \EE[U] },
\qquad
\lim_{k \to \infty} {\overline{\vc{B}}(k)\vc{e} \over \PP(U > k)}
= { \vc{c}_B \over \EE[U] },
\label{eqn-lim-A(k)e-Kimu13}
\end{equation}
where $\vc{c}_A$ and $\vc{c}_B$ are $M \times 1$ and $M_0 \times 1$
nonnegative vectors, respectively, satisfying $\vc{c}_A \neq \vc{0}$
or $\vc{c}_B \neq \vc{0}$. We then have
\[
\lim_{k\to \infty} {\overline{\vc{x}}(k) \over \PP(U_{\rm de} > k) }
= {\vc{x}(0)\vc{c}_B + \overline{\vc{x}}(0)\vc{c}_A
 \over -\sigma } \cdot \vc{\pi}.
\]
\end{prop}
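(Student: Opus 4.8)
The plan is to read off the tail of $\overline{\vc x}(k)$ from the identity $\overline{\vc x}(k)=\vc x(0)\,\overline{\vc R_0\ast\vc F}(k)$ in \eqref{eq-xk-R0-Gamma}, by computing, in turn, the $\PP(U_{\rm de}>k)$-asymptotics of $\overline{\vc R}(k)$, $\overline{\vc R}_0(k)$, $\overline{\vc F}(k)$ and finally $\overline{\vc R_0\ast\vc F}(k)$; the reference subexponential tail is $\PP(U_{\rm de}>k)$ since $U_{\rm de}\in\calS$, and Proposition~\ref{Masu11-prop} is the workhorse for the convolutions. First I would treat $\overline{\vc R}(k)$. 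Summing \eqref{eqn-R-A} over levels above $k$ and interchanging summations gives $\overline{\vc R}(k)=[\,\overline{\vc A}(k)+\sum_{m\ge1}\overline{\vc A}(k+m)\vc L(m)\,](\vc I-\vc\Phi(0))^{-1}$. Because $U_{\rm de}\in\calL$, the integrated tail dominates the ordinary one, $\PP(U>k)=o(\PP(U_{\rm de}>k))$, so the leading $\overline{\vc A}(k)$ term is negligible and everything comes from the sum. To evaluate it, set $\vc S(N)=\sum_{m=1}^{N}\vc L(m)$; Proposition~\ref{prop-lim-L(ntau+l)} gives the Ces\`aro limit $\vc S(N)/N\to\vc e\vc\psi$, and Abel summation (licensed by the monotone decay of $\{\overline{\vc A}(k+m)\}_{m}$, whose boundary term vanishes since $N\overline{\vc A}(k+N)\to\vc O$) rewrites $\sum_{m\ge1}\overline{\vc A}(k+m)\vc L(m)=\sum_{m\ge1}\vc A(k+m+1)\vc S(m)$. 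Substituting $\vc S(m)=m\vc e\vc\psi+o(m)$ and using $\sum_{m\ge1}m\vc A(k+m+1)=\ooverline{\vc A}(k)$ isolates the main term $\ooverline{\vc A}(k)\vc e\vc\psi$, while the remainder is $o(\PP(U_{\rm de}>k))$. Since integrating \eqref{eqn-lim-A(k)e-Kimu13} yields $\ooverline{\vc A}(k)\vc e\simhm{k}\vc c_A\PP(U_{\rm de}>k)$, I obtain $\overline{\vc R}(k)\simhm{k}\widetilde{\vc R}\,\PP(U_{\rm de}>k)$ with $\widetilde{\vc R}=\vc c_A\vc\psi(\vc I-\vc\Phi(0))^{-1}$; the identical computation with $\vc B$ in place of $\vc A$ gives $\overline{\vc R}_0(k)\simhm{k}\widetilde{\vc R}_0\,\PP(U_{\rm de}>k)$ with $\widetilde{\vc R}_0=\vc c_B\vc\psi(\vc I-\vc\Phi(0))^{-1}$.

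Next I would pass to $\overline{\vc F}(k)=\sum_{n\ge1}\overline{\vc R^{\ast n}}(k)$, using \eqref{def-F(k)}. Applying Proposition~\ref{Masu11-prop} inductively to $\vc R^{\ast n}=\vc R^{\ast(n-1)}\ast\vc R$ (with total masses $\sum_k\vc R^{\ast n}(k)=\vc R^{n}$, $\vc R=\sum_k\vc R(k)$) gives $\overline{\vc R^{\ast n}}(k)/\PP(U_{\rm de}>k)\to\sum_{j=0}^{n-1}\vc R^{j}\widetilde{\vc R}\,\vc R^{\,n-1-j}$. Summing over $n$ and reindexing the double sum yields $\sum_{n\ge1}\sum_{j=0}^{n-1}\vc R^{j}\widetilde{\vc R}\,\vc R^{\,n-1-j}=(\vc I-\vc R)^{-1}\widetilde{\vc R}(\vc I-\vc R)^{-1}=:\widetilde{\vc F}$, so that $\overline{\vc F}(k)\simhm{k}\widetilde{\vc F}\,\PP(U_{\rm de}>k)$. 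The only genuinely analytic point is the interchange of $\lim_k$ with $\sum_{n}$; I would justify it by dominated convergence via a Kesten-type uniform bound $\|\overline{\vc R^{\ast n}}(k)\|\le C\,n\,\rho^{\,n-1}\PP(U_{\rm de}>k)$ holding for all $k$, where $\rho$ can be chosen just above the spectral radius of $\vc R$ (which is $<1$ by positive recurrence, as $(\vc I-\vc R)^{-1}$ exists in \eqref{eq-bar-x_0-R}); since $\sum_n n\rho^{\,n-1}<\infty$, the passage to the limit is legitimate.

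A last application of Proposition~\ref{Masu11-prop}, now to $\vc R_0\ast\vc F$ with $\sum_k\vc F(k)=(\vc I-\vc R)^{-1}$, gives
\[
\lim_{k\to\infty}\frac{\overline{\vc x}(k)}{\PP(U_{\rm de}>k)}
=\vc x(0)\Bigl[\widetilde{\vc R}_0(\vc I-\vc R)^{-1}+\vc R_0(\vc I-\vc R)^{-1}\widetilde{\vc R}(\vc I-\vc R)^{-1}\Bigr].
\]
It remains to simplify the right-hand side. From the expression \eqref{defn-psi} for $\vc\psi$ one computes $\vc\psi(\vc I-\vc\Phi(0))^{-1}(\vc I-\vc R)^{-1}=\vc\pi/(-\sigma)$. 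Feeding this into the first bracketed term collapses it to $\vc x(0)\vc c_B\,\vc\pi/(-\sigma)$; feeding it into the second, after recognizing $\vc x(0)\vc R_0(\vc I-\vc R)^{-1}=\overline{\vc x}(0)$ by \eqref{eq-bar-x_0-R}, collapses it to $\overline{\vc x}(0)\vc c_A\,\vc\pi/(-\sigma)$. As $\vc x(0)\vc c_B$ and $\overline{\vc x}(0)\vc c_A$ are scalars, their sum gives precisely $[\vc x(0)\vc c_B+\overline{\vc x}(0)\vc c_A]\,\vc\pi/(-\sigma)$, which is the asserted limit.

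I expect the main obstacle to be the interchange of limit and summation in the second step, i.e.\ producing the uniform (in $k$) Kesten-type domination of the matrix renewal sequence $\{\vc R^{\ast n}(k)\}$ by a summable bound; this is the real analytic content, whereas the other steps are structural. A secondary delicate point, specific to the GI/GI/1-type (rather than M/GI/1-type) setting, is the summation-by-parts argument of the first step, where only the Ces\`aro/periodic convergence of $\vc L(m)$ furnished by Proposition~\ref{prop-lim-L(ntau+l)} is available and must be combined with the slow variation of the tail $\{\overline{\vc A}(k+m)\}_{m}$.
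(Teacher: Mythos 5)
Your argument is correct and is essentially the route the paper itself takes: it establishes this Proposition as the special case $Y=U_{\rm de}$ of Theorem~\ref{thm-original} (via Remark~\ref{rem-assumpt}), whose proof runs through exactly your chain — the tail of $\overline{\vc{R}}(k)$, $\overline{\vc{R}}_0(k)$ from the Ces\`aro behaviour of $\vc{L}(m)$ (Lemma~\ref{lem-AD-BD}/\ref{lem-R-R0}), then $\overline{\vc{F}}(k)$ for the renewal sequence (Lemma~\ref{lem-gamma}), then Proposition~\ref{Masu11-prop} applied to (\ref{eq-xk-R0-Gamma}) together with (\ref{eq-bar-x_0-R}). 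The only differences are cosmetic: you use summation by parts where the paper sandwiches over blocks of length $\tau$ in Appendix~\ref{proof-lem-AD=BD}, and you sketch the Kesten-type domination directly where the paper cites Lemma~6 of \citet{Jele98}.
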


In this section, we present a more general result than the above
proposition.  For this purpose, we make the following assumption:
\begin{assumpt}\label{assu-tail-A(k)e-B(k)e}
There exists some random variable $Y$ in $\bbZ_+$ such that
\begin{equation}
\lim_{k \to \infty} {\ooverline{\vc{A}}(k)\vc{e} \over \PP(Y > k)}
= \vc{c}_A,
\qquad
\lim_{k \to \infty} {\ooverline{\vc{B}}(k)\vc{e} \over \PP(Y > k)}
= \vc{c}_B,
\label{eqn-lim-A(k)e}
\end{equation}
where $\vc{c}_A$ and $\vc{c}_B$ are $M \times 1$ and $M_0 \times 1$
nonnegative vectors, respectively, satisfying $\vc{c}_A \neq \vc{0}$
or $\vc{c}_B \neq \vc{0}$.
\end{assumpt}

\begin{rem}\label{rem-assumpt}
We suppose that (\ref{eqn-lim-A(k)e-Kimu13}) holds for some some
random variable $U$ in $\bbZ_+$ with positive finite mean ($U_{\rm de}
\in \calS$ is not necessarily assumed). It then follows from
(\ref{eqn-lim-A(k)e-Kimu13}) that
\[
\lim_{k \to \infty} {\overline{\vc{A}}(k)\vc{e} \over \PP(U_{\rm de} = k)}
= \vc{c}_A,
\qquad
\lim_{k \to \infty} {\overline{\vc{B}}(k)\vc{e} \over\PP(U_{\rm de} = k)}
= \vc{c}_B,
\]
which yield
\[
\lim_{k \to \infty} {\ooverline{\vc{A}}(k)\vc{e} \over \PP(U_{\rm de} > k)}
= \vc{c}_A,
\qquad
\lim_{k \to \infty} {\ooverline{\vc{B}}(k)\vc{e} \over\PP(U_{\rm de} > k)}
= \vc{c}_B.
\]
Thus Assumption~\ref{assu-tail-A(k)e-B(k)e} holds for $Y = U_{\rm de}$.
\end{rem}

The following theorem is the main result of this paper.
\begin{thm}\label{thm-original}
Suppose that (i) Assumption~\ref{assu-1} is satisfied; and (ii)
Assumption~\ref{assu-tail-A(k)e-B(k)e} holds for some $Y \in
\calS$. We then have
\begin{equation}
\lim_{k\to \infty} {\overline{\vc{x}}(k) \over \PP(Y > k) }
= {\vc{x}(0)\vc{c}_B + \overline{\vc{x}}(0)\vc{c}_A
 \over -\sigma } \cdot \vc{\pi}.
\label{eq-bar-x-Y}
\end{equation}
\end{thm}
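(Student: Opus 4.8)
The plan is to propagate the subexponential tail $\PP(Y>k)$ through the representation $\overline{\vc{x}}(k) = \vc{x}(0)\,\overline{\vc{R}_0 \ast \vc{F}}(k)$, with $\vc{F} = \sum_{n=0}^{\infty}\vc{R}^{\ast n}$ as in (\ref{def-F(k)}), in three stages. First I would establish the single-tail asymptotics of the $R$-matrices, which I would package as Lemma~\ref{lem-AD-BD}. Taking tails in (\ref{eqn-R-A}) gives
\[
\overline{\vc{R}}(k)(\vc{I}-\vc{\Phi}(0))
= \overline{\vc{A}}(k) + \sum_{m=1}^{\infty}\overline{\vc{A}}(k+m)\vc{L}(m),
\]
and its analogue for $\vc{R}_0$ with $\vc{B}$ in place of $\vc{A}$. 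Two facts drive the computation. Since $Y \in \calS \subset \calL$, both $\ooverline{\vc{A}}(k-1)\vc{e}/\PP(Y>k)$ and $\ooverline{\vc{A}}(k)\vc{e}/\PP(Y>k)$ tend to $\vc{c}_A$ (using $\PP(Y>k-1)\simhm{k}\PP(Y>k)$), so differencing yields $\overline{\vc{A}}(k)\vc{e} = o(\PP(Y>k))$; thus the bare term $\overline{\vc{A}}(k)$ is asymptotically negligible even though its own tail is not controlled. The weighted sum is evaluated by replacing $\vc{L}(m)$ with its Cesàro limit $\vc{e}\vc{\psi}$ from Proposition~\ref{prop-lim-L(ntau+l)}: grouping $m$ into blocks of length $\tau$ and combining $\sum_{l=0}^{\tau-1}\vc{L}(n\tau+l)\to\tau\vc{e}\vc{\psi}$ with the long-tailed (locally flat) behaviour of the weights $\overline{\vc{A}}(k+\cdot)\vc{e}$ gives $\sum_{m\ge1}\overline{\vc{A}}(k+m)\vc{L}(m) \sim \ooverline{\vc{A}}(k)\vc{e}\,\vc{\psi} \sim \PP(Y>k)\,\vc{c}_A\vc{\psi}$. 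Using $\vc{\psi}(\vc{I}-\vc{\Phi}(0))^{-1} = \vc{\pi}(\vc{I}-\vc{R})/(-\sigma)$ from (\ref{defn-psi}), I would thus obtain
\[
\lim_{k\to\infty}\frac{\overline{\vc{R}}(k)}{\PP(Y>k)}
= \vc{c}_A\,\frac{\vc{\pi}(\vc{I}-\vc{R})}{-\sigma} =: \widetilde{\vc{R}},
\qquad
\lim_{k\to\infty}\frac{\overline{\vc{R}}_0(k)}{\PP(Y>k)}
= \vc{c}_B\,\frac{\vc{\pi}(\vc{I}-\vc{R})}{-\sigma} =: \widetilde{\vc{R}}_{0}.
\]

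In the second and third stages I would transfer these asymptotics through the renewal structure using the subexponential convolution result, Proposition~\ref{Masu11-prop}. Applying it inductively to the convolution powers gives $\overline{\vc{R}^{\ast n}}(k)/\PP(Y>k)\to\sum_{i=0}^{n-1}\vc{R}^{i}\widetilde{\vc{R}}\,\vc{R}^{\,n-1-i}$; since the spectral radius of $\vc{R}$ is strictly less than one, a uniform (Kesten-type) bound---available because $Y\in\calS$---justifies exchanging the limit with the sum over $n$, so that $\overline{\vc{F}}(k)/\PP(Y>k)\to(\vc{I}-\vc{R})^{-1}\widetilde{\vc{R}}(\vc{I}-\vc{R})^{-1}$. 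One further application of Proposition~\ref{Masu11-prop} to $\vc{R}_0\ast\vc{F}$ yields
\[
\lim_{k\to\infty}\frac{\overline{\vc{R}_0\ast\vc{F}}(k)}{\PP(Y>k)}
= \widetilde{\vc{R}}_{0}(\vc{I}-\vc{R})^{-1}
+ \vc{R}_0(\vc{I}-\vc{R})^{-1}\widetilde{\vc{R}}(\vc{I}-\vc{R})^{-1}.
\]
Left-multiplying by $\vc{x}(0)$, inserting $\widetilde{\vc{R}}$ and $\widetilde{\vc{R}}_{0}$ from the first stage, using the collapse $\frac{\vc{\pi}(\vc{I}-\vc{R})}{-\sigma}(\vc{I}-\vc{R})^{-1} = \frac{\vc{\pi}}{-\sigma}$, and substituting $\overline{\vc{x}}(0) = \vc{x}(0)\vc{R}_0(\vc{I}-\vc{R})^{-1}$ from (\ref{eq-bar-x_0-R}), the two resulting terms collapse to $\{\vc{x}(0)\vc{c}_B + \overline{\vc{x}}(0)\vc{c}_A\}\vc{\pi}/(-\sigma)$, which is exactly (\ref{eq-bar-x-Y}).

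The hard part will be the first stage, and within it the evaluation of $\sum_{m\ge1}\overline{\vc{A}}(k+m)\vc{L}(m)$. This is precisely where the weakening from a single-tail to an integrated-tail hypothesis makes itself felt: the individual tails $\overline{\vc{A}}(k)$ need not behave regularly, and the matrices $\vc{L}(m)$ converge only in the Cesàro sense because of the period $\tau$. Making the replacement of $\vc{L}(m)$ by $\vc{e}\vc{\psi}$ rigorous therefore requires a careful block (Abel) summation argument that exploits the monotonicity and long-tailed flatness of $\overline{\vc{A}}(k+\cdot)\vc{e}$ to absorb the period-$\tau$ oscillations of $\vc{L}(m)$ against weights of total mass $\ooverline{\vc{A}}(k)\vc{e}\sim\vc{c}_A\PP(Y>k)$; this is exactly the step for which Proposition~\ref{prop-lim-L(ntau+l)} is designed. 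Once this stage is in place, the remaining two stages are routine applications of Proposition~\ref{Masu11-prop} together with standard subexponential renewal estimates.
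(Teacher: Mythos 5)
Your proposal is correct and follows essentially the same route as the paper: the same differencing argument to kill the bare term $\overline{\vc{A}}(k)$, the same block-of-length-$\tau$ evaluation of $\sum_{m}\overline{\vc{A}}(k+m)\vc{L}(m)$ via Proposition~\ref{prop-lim-L(ntau+l)} (this is the paper's Lemma~\ref{lem-AD-BD}, proved in the appendix exactly as you describe), yielding the paper's Lemma~\ref{lem-R-R0}, followed by the transfer to $\overline{\vc{F}}(k)$ and the final application of Proposition~\ref{Masu11-prop} with the substitution of (\ref{eq-bar-x_0-R}). The only cosmetic difference is at the $\overline{\vc{F}}(k)$ stage, where the paper simply cites Lemma~6 of \citet{Jele98} for the matrix renewal asymptotics while you re-derive it by induction on convolution powers plus a Kesten-type bound --- the same underlying argument.
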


Before proving Theorem~\ref{thm-original}, we compare the above
theorem with Proposition~\ref{prop-thm3.1-Kimu13}.  According to
Remark~\ref{rem-assumpt}, condition (ii) of
Proposition~\ref{prop-thm3.1-Kimu13} is sufficient for condition (ii)
of Theorem~\ref{thm-original}.  On the other hand, the latter do not
imply the former. To confirm this, we suppose that
(\ref{eqn-lim-A(k)e}) holds for a random $Y$ in $\bbZ_+$ such that
\begin{equation}
\PP(Y > k)
= 
\left\{
\begin{array}{ll}
\PP(U_{\rm de} > 2n),   						
& k = 2n,~n \in \bbZ_+,
\\
\dm{1 \over 2}\left\{\PP(U_{\rm de} > 2n) + \PP(U_{\rm de} > 2n+1)\right\}, 
& k = 2n+1,~n \in \bbZ_+,
\end{array}
\right.
\label{defn-Y}
\end{equation}
where $U$ is a random variable in $\bbZ_+$ such that $U \in \calS$ and
$U_{\rm de} \in \calS$ (see \citealt{Gold98} and also Definition~A.3 and
Proposition A.2 in \citealt{Masu11}). It follows from $U_{\rm de} \in
\calS$ and (\ref{defn-Y}) that $\PP(Y > k) \simhm{k} \PP(U_{\rm de} >
k)$ and thus $Y \in \calS$ (\citealt[Proposition 2.8]{Sigm99}),
which shows that condition (ii) of Theorem~\ref{thm-original} holds
for $Y \in \calS$ defined in (\ref{defn-Y}).

Note here that (\ref{eqn-lim-A(k)e}), (\ref{defn-Y}) and $U \in \calS
\subset \calL$ yield
\begin{eqnarray}
\overline{\vc{A}}(2n)\vc{e}
&=& \ooverline{\vc{A}}(2n-1)\vc{e} - \ooverline{\vc{A}}(2n)\vc{e}
\nonumber
\\
&\simhm{n}& 
{\vc{c}_A \over 2}
\left\{ \PP(U_{\rm de} > 2n-2) + \PP(U_{\rm de} > 2n-1) \right\}
- \vc{c}_A \PP(U_{\rm de} > 2n)
\nonumber
\\
&=& \vc{c}_A
\bigg[
{1 \over 2}
\left\{ \PP(U_{\rm de} > 2n-2) - \PP(U_{\rm de} > 2n-1) \right\}
\nonumber
\\
&& {} \qquad 
+ \PP(U_{\rm de} > 2n-1) - \PP(U_{\rm de} > 2n)
\bigg]
\nonumber
\\
&=& \vc{c}_A
\bigg[
{1 \over 2} \PP(U_{\rm de} = 2n-1) + \PP(U_{\rm de} = 2n)
\bigg]
\nonumber
\\
&=& 
\vc{c}_A
\left(
{1 \over 2} {\PP(U > 2n-1) \over \EE[U]} + {\PP(U > 2n) \over \EE[U]}
\right)
\simhm{n}
{3 \vc{c}_A \over 2} {\PP(U > 2n)  \over \EE[U] },
\label{lim-overline{A}(2n)e}
\end{eqnarray}
and 
\begin{eqnarray}
\overline{\vc{A}}(2n+1)\vc{e}
&=& \ooverline{\vc{A}}(2n)\vc{e} - \ooverline{\vc{A}}(2n+1)\vc{e}
\nonumber
\\
&\simhm{n}& \vc{c}_A\PP(U_{\rm de} > 2n) - 
{\vc{c}_A \over 2}\left\{\PP(U_{\rm de} > 2n) + \PP(U_{\rm de} > 2n+1)\right\}
\nonumber
\\
&=&
{\vc{c}_A \over 2}
\left\{ \PP(U_{\rm de} > 2n) - \PP(U_{\rm de} > 2n+1) \right\}
\nonumber
\\
&=& {\vc{c}_A \over 2} \PP(U_{\rm de} = 2n+1)
=
{\vc{c}_A \over 2} { \PP(U > 2n+1) \over \EE[U] }.
\label{lim-overline{A}(2n+1)e}
\end{eqnarray} 
The equations (\ref{lim-overline{A}(2n)e}) and
(\ref{lim-overline{A}(2n+1)e}) show that
$\lim_{k\to\infty}\overline{\vc{A}}(k)\vc{e}/\PP(U > k)$ does not
exist and thus condition (ii) of Proposition~\ref{prop-thm3.1-Kimu13}
does not hold. Consequently, Theorem~\ref{thm-original} is a more
general result than Proposition~\ref{prop-thm3.1-Kimu13}.

In what follows, we prove Theorem~\ref{thm-original}. To this end, we
establish three lemmas.
\begin{lem}\label{lem-AD-BD}
Suppose that Assumption~\ref{assu-1} is satisfied. If
Assumption~\ref{assu-tail-A(k)e-B(k)e} holds for some $Y \in \calL$,
then
\begin{eqnarray}
\lim_{k \to \infty} \sum_{m = 1}^{\infty} 
{\overline{\vc{A}}(k+m) \vc{L}(m)
 \over \PP(Y > k) }
&=& 
{\vc{c}_A\vc{\pi}(\vc{I} - \vc{R} ) (\vc{I} - \vc{\Phi}(0)) 
\over -\sigma },
\label{limit-AD}
\\
\lim_{k \to \infty} \sum_{m = 1}^{\infty} 
{\overline{\vc{B}}(k+m) \vc{L}(m)
 \over \PP(Y > k) } 
&=& 
{\vc{c}_B\vc{\pi}(\vc{I} - \vc{R} ) (\vc{I} - \vc{\Phi}(0)) 
\over -\sigma }.
\label{limit-BD}
\end{eqnarray}
\end{lem}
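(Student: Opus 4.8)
The plan is to show that, up to an error of order $o(\PP(Y>k))$, the matrix $\vc{L}(m)$ may be replaced by its rank-one limit $\vc{e}\vc{\psi}$ inside the sum; the point is that by (\ref{defn-psi}) the right-hand side of (\ref{limit-AD}) is exactly $\vc{c}_A\vc{\psi}$. First I would record two elementary consequences of the hypotheses: (a) each entry of $\vc{L}(m)$ lies in $[0,1]$, so $\vc{D}(m):=\vc{L}(m)-\vc{e}\vc{\psi}$ is uniformly bounded in norm; and (b) since $Y\in\calL$ and $\ooverline{\vc{A}}(k)\vc{e}\simhm{k}\vc{c}_A\PP(Y>k)$, one has both $\ooverline{\vc{A}}(k)\vc{e}=O(\PP(Y>k))$ and $\overline{\vc{A}}(k+j)\vc{e}=o(\PP(Y>k))$ for every fixed $j\ge0$, because $\overline{\vc{A}}(k+j)=\ooverline{\vc{A}}(k+j-1)-\ooverline{\vc{A}}(k+j)$ is the increment of a sequence asymptotic to a long-tailed tail. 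Using $\sum_{m=1}^{\infty}\overline{\vc{A}}(k+m)\vc{e}=\ooverline{\vc{A}}(k)\vc{e}$, I would split
\[
\sum_{m=1}^{\infty}\overline{\vc{A}}(k+m)\vc{L}(m)
=\ooverline{\vc{A}}(k)\vc{e}\,\vc{\psi}
+\sum_{m=1}^{\infty}\overline{\vc{A}}(k+m)\vc{D}(m),
\]
so that after dividing by $\PP(Y>k)$ the first term tends to $\vc{c}_A\vc{\psi}$ by Assumption~\ref{assu-tail-A(k)e-B(k)e}, and the whole problem reduces to proving $\sum_{m=1}^{\infty}\overline{\vc{A}}(k+m)\vc{D}(m)=o(\PP(Y>k))$.

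The main obstacle is that $\vc{L}(m)$ need not converge to $\vc{e}\vc{\psi}$ when the kernel $\{\vc{A}(k)\}$ is periodic: Proposition~\ref{prop-lim-L(ntau+l)} only yields the block-averaged limit $\vc{E}(n):=\sum_{l=0}^{\tau-1}\vc{D}(n\tau+l)\to\vc{O}$, whereas $\|\vc{D}(m)\|$ stays bounded away from zero. A naive term-by-term estimate therefore fails, and instead I would exploit the block cancellation by a summation-by-parts/blocking argument. Grouping $m=n\tau+l$ ($0\le l\le\tau-1$) and writing $\overline{\vc{A}}(k+n\tau+l)=\overline{\vc{A}}(k+n\tau)-\sum_{i=1}^{l}\vc{A}(k+n\tau+i)$, I split the remainder into a leading block term $\sum_{n}\overline{\vc{A}}(k+n\tau)\vc{E}(n)$ and a within-block correction whose norm is bounded by $C\sum_{j>k}\vc{A}(j)\vc{e}=C\,\overline{\vc{A}}(k)\vc{e}$.

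The within-block correction is then $o(\PP(Y>k))$ directly by (b). For the leading term I would fix $\varepsilon>0$, choose $N_0$ with $\|\vc{E}(n)\|<\varepsilon$ for $n\ge N_0$ (Proposition~\ref{prop-lim-L(ntau+l)}), and estimate $\|\sum_n\overline{\vc{A}}(k+n\tau)\vc{E}(n)\|$ by separating $n<N_0$ from $n\ge N_0$: the finitely many initial blocks contribute $o(\PP(Y>k))$ by (b), while the tail is bounded by $\varepsilon\sum_{m\ge1}\overline{\vc{A}}(k+m)\vc{e}=\varepsilon\,\ooverline{\vc{A}}(k)\vc{e}\le\varepsilon\,C\,\PP(Y>k)$ for large $k$ by (a). Letting $k\to\infty$ and then $\varepsilon\downarrow0$ gives the claim, and hence (\ref{limit-AD}).

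Finally, (\ref{limit-BD}) is proved verbatim with $\vc{A}$, $\vc{c}_A$ replaced by $\vc{B}$, $\vc{c}_B$, since the argument uses only the boundedness of $\vc{L}(m)$, Proposition~\ref{prop-lim-L(ntau+l)}, and the tail behaviour in Assumption~\ref{assu-tail-A(k)e-B(k)e} together with $Y\in\calL$. The delicate point throughout is that $Y$ is merely long-tailed and not subexponential, so every cancellation must be extracted from the block (periodic) structure of $\vc{L}(m)$ rather than from any convolution identity; the small lemma $\overline{\vc{A}}(k)\vc{e}=o(\PP(Y>k))$ in (b) is the essential device that lets the double-tail assumption control the ordinary tails appearing in the blocking estimate.
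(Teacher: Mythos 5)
Your argument is correct, and it rests on the same two pillars as the paper's proof in Appendix~A.1: the block (Ces\`aro) convergence $\sum_{l=0}^{\tau-1}\vc{L}(n\tau+l)\to\tau\vc{e}\vc{\psi}$ from Proposition~\ref{prop-lim-L(ntau+l)}, and the observation that $Y\in\calL$ together with Assumption~\ref{assu-tail-A(k)e-B(k)e} forces $\overline{\vc{A}}(k+j)\vc{e}=\ooverline{\vc{A}}(k+j-1)\vc{e}-\ooverline{\vc{A}}(k+j)\vc{e}=o(\PP(Y>k))$ for each fixed $j$, which disposes of any fixed finite collection of terms. Where you genuinely differ is in how the tail of the sum is handled. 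The paper keeps the sandwich $\vc{e}(\tau\vc{\psi}-\varepsilon\vc{e}^{\rmt})\le\sum_{l=0}^{\tau-1}\vc{L}(n\tau+l)\le\vc{e}(\tau\vc{\psi}+\varepsilon\vc{e}^{\rmt})$ and exploits the monotonicity of $k\mapsto\overline{\vc{A}}(k)$ to replace $\overline{\vc{A}}(k+n\tau+l)$ by shifted block averages ($\tau^{-1}\sum_{i=0}^{\tau-1}\overline{\vc{A}}(k+n\tau-i)$ from above, $\tau^{-1}\sum_{i=1}^{\tau}\overline{\vc{A}}(k+n\tau+\tau+i)$ from below), so that the sum over $n$ telescopes exactly into $\ooverline{\vc{A}}(\cdot)\vc{e}$ and yields matching upper and lower limits $\vc{c}_A(\vc{\psi}\pm\varepsilon\vc{e}^{\rmt}/\tau)$. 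You instead subtract the limit additively, $\vc{L}(m)=\vc{e}\vc{\psi}+\vc{D}(m)$, let the $\vc{e}\vc{\psi}$ part telescope exactly via $\sum_{m\ge1}\overline{\vc{A}}(k+m)\vc{e}=\ooverline{\vc{A}}(k)\vc{e}$, and pay for the block misalignment with an explicit within-block correction bounded by a constant multiple of $\overline{\vc{A}}(k)\vc{e}=o(\PP(Y>k))$. The two devices are interchangeable under the stated hypotheses; your additive version makes it immediately visible that the limit is exactly $\vc{c}_A\vc{\psi}$ and replaces the two one-sided estimates by a single error bound, at the cost of one extra summation-by-parts step. The only blemish is the indexing at the block boundary ($m=0$ is not in the sum, and your tail bound needs $N_0\ge1$ so that the indices $k+n\tau$ with $n\ge N_0$ form a subset of $\{k+m\}_{m\ge1}$), but both points are absorbed by your finite-initial-segment estimate. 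Crucially, like the paper, you use only $Y\in\calL$ and never subexponentiality, which is what the lemma requires.
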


\proof See Appendix~\ref{proof-lem-AD=BD}. \qed

\begin{lem}\label{lem-R-R0}
Suppose that Assumption~\ref{assu-1} is satisfied. If
Assumption~\ref{assu-tail-A(k)e-B(k)e} holds for some $Y \in \calL$,
then
\begin{eqnarray}
\lim_{k \to \infty} {\overline{\vc{R}}(k) \over \PP(Y > k) }
&=&
{\vc{c}_A\vc{\pi} ( \vc{I} - \vc{R} )  \over -\sigma
},
\label{eq-R-A-Y}
\\
\lim_{k \to \infty} {\overline{\vc{R}_0}(k) \over \PP(Y > k) }
&=&
{\vc{c}_B\vc{\pi} ( \vc{I} - \vc{R} ) 
\over -\sigma
}.
\label{eq-R0-B-Y}
\end{eqnarray}
\end{lem}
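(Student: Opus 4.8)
The plan is to express the single tails $\overline{\vc{R}}(k)$ and $\overline{\vc{R}_0}(k)$ in a form where the nontrivial part is precisely what Lemma~\ref{lem-AD-BD} already controls, and a leading remainder term that turns out to be asymptotically negligible. Starting from the representations (\ref{eqn-R-A}) and summing over all levels above $k$, I would first derive the tail identities
\[
\overline{\vc{R}}(k)
= \left[\overline{\vc{A}}(k) + \sum_{m=1}^{\infty}\overline{\vc{A}}(k+m)\vc{L}(m)\right](\vc{I}-\vc{\Phi}(0))^{-1},
\]
\[
\overline{\vc{R}_0}(k)
= \left[\overline{\vc{B}}(k) + \sum_{m=1}^{\infty}\overline{\vc{B}}(k+m)\vc{L}(m)\right](\vc{I}-\vc{\Phi}(0))^{-1}.
\]
Here the interchange $\sum_{l=k+1}^{\infty}\sum_{m=1}^{\infty}\vc{A}(l+m)\vc{L}(m) = \sum_{m=1}^{\infty}\overline{\vc{A}}(k+m)\vc{L}(m)$ is justified by Tonelli's theorem, since all matrices involved are nonnegative, and the resulting sums are finite because $\vc{R}$ and $\vc{R}_0$ are finite under Assumption~\ref{assu-1}.

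Dividing each identity by $\PP(Y>k)$, the convolution terms are exactly the objects evaluated in (\ref{limit-AD}) and (\ref{limit-BD}) of Lemma~\ref{lem-AD-BD}. After right-multiplication by $(\vc{I}-\vc{\Phi}(0))^{-1}$ they contribute $\vc{c}_A\vc{\pi}(\vc{I}-\vc{R})/(-\sigma)$ and $\vc{c}_B\vc{\pi}(\vc{I}-\vc{R})/(-\sigma)$, which are precisely the claimed right-hand sides of (\ref{eq-R-A-Y}) and (\ref{eq-R0-B-Y}). Thus everything reduces to showing that the leading single-tail terms $\overline{\vc{A}}(k)$ and $\overline{\vc{B}}(k)$ are negligible relative to $\PP(Y>k)$.

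This negligibility step is the crux of the argument, and the only place where the long-tailed hypothesis $Y\in\calL$ is genuinely used; it reflects the fact, illustrated by the example (\ref{defn-Y}), that the single tail $\overline{\vc{A}}(k)$ need not itself be asymptotically proportional to $\PP(Y>k)$. I would write $\overline{\vc{A}}(k)\vc{e} = \ooverline{\vc{A}}(k-1)\vc{e} - \ooverline{\vc{A}}(k)\vc{e}$, divide by $\PP(Y>k)$, and combine $\PP(Y>k-1)\simhm{k}\PP(Y>k)$ (Definition~\ref{defn-long-tailed}) with Assumption~\ref{assu-tail-A(k)e-B(k)e} to obtain $\overline{\vc{A}}(k)\vc{e}/\PP(Y>k)\to\vc{c}_A-\vc{c}_A=\vc{0}$. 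Since $\overline{\vc{A}}(k)$ is a nonnegative matrix, vanishing row sums force every entry to vanish, so $\overline{\vc{A}}(k)/\PP(Y>k)\to\vc{O}$; the identical argument applied to $\ooverline{\vc{B}}(k)\vc{e}$ gives $\overline{\vc{B}}(k)/\PP(Y>k)\to\vc{O}$.

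Finally, combining the negligible leading terms with the convolution limits inside each bracketed expression and passing to the limit termwise yields (\ref{eq-R-A-Y}) and (\ref{eq-R0-B-Y}). I expect the only real obstacle to be the negligibility step above, since it is what distinguishes the weaker integrated-tail Assumption~\ref{assu-tail-A(k)e-B(k)e} from a direct single-tail hypothesis; the remaining manipulations are routine bookkeeping built on Lemma~\ref{lem-AD-BD}.
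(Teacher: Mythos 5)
Your proposal is correct and follows essentially the same route as the paper: the tail identity $\overline{\vc{R}}(k)=[\overline{\vc{A}}(k)+\sum_{m\ge1}\overline{\vc{A}}(k+m)\vc{L}(m)](\vc{I}-\vc{\Phi}(0))^{-1}$, the observation that $\overline{\vc{A}}(k)\le(\ooverline{\vc{A}}(k-1)\vc{e}-\ooverline{\vc{A}}(k)\vc{e})\vc{e}^{\rmt}=o(\PP(Y>k))$ via $Y\in\calL$, and then Lemma~\ref{lem-AD-BD} for the remaining convolution term. The only difference is cosmetic: you make the Tonelli interchange and the ``nonnegative matrix with vanishing row sums'' step explicit, which the paper leaves implicit.
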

\proof
From (\ref{eqn-R-A}), we have
\begin{equation}
\overline{\vc{R}}(k) 
= \left[ 
\overline{\vc{A}}(k) + \sum_{m = 1}^{\infty} 
\overline{\vc{A}}(k+m) \vc{L}(m) \right](\vc{I} - \vc{\Phi}(0))^{-1}.
\label{eqn-overline{R}(k)}
\end{equation}
Further it follows from (\ref{eqn-lim-A(k)e}) and $Y \in \calL$ that
\[
\lim_{k \to \infty} {\overline{\vc{A}}(k) \over \PP(Y > k) }
\le \lim_{k \to \infty} 
{\ooverline{\vc{A}}(k-1)\vc{e}\vc{e}^{\rmt} - \ooverline{\vc{A}}(k)\vc{e}\vc{e}^{\rmt} 
\over \PP(Y > k) } = \vc{O}.
\]
Thus (\ref{eqn-overline{R}(k)}) yields
\begin{equation}
\lim_{k \to \infty} {\overline{\vc{R}}(k) \over \PP(Y > k) }
=
\lim_{k \to \infty} \sum_{m = 1}^{\infty} 
{\overline{\vc{A}}(k+m) \vc{L}(m)
 \over \PP(Y > k) } ( \vc{I} - \vc{\Phi}(0) )^{-1} .
\label{eq-R-A-add1}
\end{equation}
Substituting (\ref{limit-AD}) into (\ref{eq-R-A-add1}), we obtain
(\ref{eq-R-A-Y}).  Similarly, we can prove (\ref{eq-R0-B-Y}). \qed

\begin{lem}\label{lem-gamma} 
Suppose that Assumption~\ref{assu-1} is satisfied. If
Assumption~\ref{assu-tail-A(k)e-B(k)e} holds for some $Y \in \calS$,
then
\begin{equation}
\lim_{k \to \infty} {\overline{\vc{F}}(k) \over \PP(Y > k)}
= 
 {(\vc{I} - \vc{R} )^{-1}
\vc{c}_A\vc{\pi} \over -\sigma
}.
\label{eq-gamma-Y}
\end{equation}
\end{lem}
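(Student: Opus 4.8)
The plan is to compute the tail of $\vc{F}(k)=\sum_{n=0}^{\infty}\vc{R}^{\ast n}(k)$ term by term, using Proposition~\ref{Masu11-prop} to handle each convolution power, and then to interchange $\lim_{k\to\infty}$ with the sum over $n$. First I would record that, since $(\vc{I}-\vc{R})^{-1}=\sum_{n=0}^{\infty}\vc{R}^{n}$ exists, the nonnegative matrix $\vc{R}$ has spectral radius $\rsp(\vc{R})<1$. Writing $\widetilde{\vc{R}}$ for the limit in (\ref{eq-R-A-Y}), i.e.\ $\widetilde{\vc{R}}=\vc{c}_A\vc{\pi}(\vc{I}-\vc{R})/(-\sigma)$, I would prove by induction on $n$ that
\[
\lim_{k\to\infty}\frac{\overline{\vc{R}^{\ast n}}(k)}{\PP(Y>k)}
=\widetilde{\vc{R}^{\ast n}}:=\sum_{j=0}^{n-1}\vc{R}^{j}\,\widetilde{\vc{R}}\,\vc{R}^{\,n-1-j},
\qquad n\ge 1,
\]
with $\widetilde{\vc{R}^{\ast 0}}=\vc{O}$. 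The base case $n=1$ is exactly (\ref{eq-R-A-Y}); the inductive step writes $\vc{R}^{\ast n}=\vc{R}^{\ast(n-1)}\ast\vc{R}$ and applies Proposition~\ref{Masu11-prop} with $\vc{M}(\cdot)=\vc{R}^{\ast(n-1)}(\cdot)$ (summing to $\vc{R}^{\,n-1}$, finite since $\rsp(\vc{R})<1$) and $\vc{N}(\cdot)=\vc{R}(\cdot)$ (summing to $\vc{R}$), yielding $\widetilde{\vc{R}^{\ast n}}=\widetilde{\vc{R}^{\ast(n-1)}}\vc{R}+\vc{R}^{\,n-1}\widetilde{\vc{R}}$, which unwinds to the stated closed form.

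Next, since every matrix involved is nonnegative, $\overline{\vc{F}}(k)=\sum_{n=0}^{\infty}\overline{\vc{R}^{\ast n}}(k)$, so (granting the interchange discussed below)
\[
\lim_{k\to\infty}\frac{\overline{\vc{F}}(k)}{\PP(Y>k)}
=\sum_{n=1}^{\infty}\widetilde{\vc{R}^{\ast n}}
=\sum_{i=0}^{\infty}\sum_{l=0}^{\infty}\vc{R}^{i}\,\widetilde{\vc{R}}\,\vc{R}^{l}
=(\vc{I}-\vc{R})^{-1}\,\widetilde{\vc{R}}\,(\vc{I}-\vc{R})^{-1},
\]
after reindexing the double sum by $i=j$ and $l=n-1-j$ (each pair $(i,l)$ with $i,l\ge0$ arising exactly once, from $n=i+l+1$). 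Substituting $\widetilde{\vc{R}}=\vc{c}_A\vc{\pi}(\vc{I}-\vc{R})/(-\sigma)$, the factor $(\vc{I}-\vc{R})$ cancels the right-hand $(\vc{I}-\vc{R})^{-1}$, leaving precisely $(\vc{I}-\vc{R})^{-1}\vc{c}_A\vc{\pi}/(-\sigma)$, the right-hand side of (\ref{eq-gamma-Y}).

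The step that requires genuine care, and the main obstacle, is justifying the interchange of $\lim_{k\to\infty}$ with $\sum_{n}$; this is where the hypothesis $Y\in\calS$ is essential, in contrast to Lemmas~\ref{lem-AD-BD} and \ref{lem-R-R0}, which needed only $Y\in\calL$. The hard part will be establishing a Kesten-type uniform bound: combining $\rsp(\vc{R})<1$ with the subexponentiality of $Y$, for any $\varepsilon>0$ with $\rsp(\vc{R})+\varepsilon<1$ there should exist a finite constant $C_\varepsilon$ and an index $k_0$ such that
\[
\frac{\big[\overline{\vc{R}^{\ast n}}(k)\big]_{i,j}}{\PP(Y>k)}
\le C_\varepsilon\,n\,\big(\rsp(\vc{R})+\varepsilon\big)^{\,n-1},
\qquad k\ge k_0,\ n\ge 1.
\]
Because $\sum_{n\ge1}n(\rsp(\vc{R})+\varepsilon)^{n-1}<\infty$, this furnishes a summable dominating sequence and legitimizes the interchange by dominated convergence. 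Concretely, I would fix a positive right Perron vector $\vc{u}$ of $\vc{R}$ with $\vc{R}\vc{u}\le(\rsp(\vc{R})+\varepsilon)\vc{u}$ and control $\overline{\vc{R}^{\ast n}}(k)\vc{u}$ against the scalar tail $\PP(Y>k)$, iterating the single-convolution tail estimate that underlies Proposition~\ref{Masu11-prop}; the linear factor $n$ reflects the $n$ summands in $\widetilde{\vc{R}^{\ast n}}$.
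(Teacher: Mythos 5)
Your argument is correct in outline and arrives at the right formula, but it is organized differently from the paper's proof in one essential respect: the paper disposes of the entire renewal-type computation by citing Lemma~6 of Jelenkovi\'{c} and Lazar (1998), which directly yields
\[
\lim_{k\to\infty}\frac{\overline{\vc{F}}(k)}{\PP(Y>k)}
=(\vc{I}-\vc{R})^{-1}\,
\lim_{k\to\infty}\frac{\overline{\vc{R}}(k)}{\PP(Y>k)}\,
(\vc{I}-\vc{R})^{-1},
\]
after which substituting (\ref{eq-R-A-Y}) and cancelling $(\vc{I}-\vc{R})$ gives (\ref{eq-gamma-Y}) exactly as in your final step. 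What you have written out---the termwise limit of $\overline{\vc{R}^{\ast n}}(k)/\PP(Y>k)$ by induction on $n$ via Proposition~\ref{Masu11-prop}, the closed form $\sum_{j=0}^{n-1}\vc{R}^{j}\widetilde{\vc{R}}\vc{R}^{n-1-j}$, the reindexing of the double series by $n=i+l+1$, and the Kesten-type uniform bound legitimizing the interchange of $\lim_{k}$ with $\sum_{n}$---is precisely the content, and essentially the standard proof, of that cited lemma; so your route is a self-contained reconstruction of what the paper outsources. The one place your write-up is not complete is the Kesten bound itself: you state it and sketch how a Perron vector of $\vc{R}$ would reduce it to an iterated single-convolution estimate, but you do not prove it, and it is the genuinely hard step (and the sole reason the hypothesis is $Y\in\calS$ here rather than $Y\in\calL$ as in Lemmas~\ref{lem-AD-BD} and \ref{lem-R-R0}). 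A complete version of your proof would either supply that bound in full or, as the paper does, cite it. Everything else---the induction, the summation $\sum_{i,l\ge0}\vc{R}^{i}\widetilde{\vc{R}}\vc{R}^{l}=(\vc{I}-\vc{R})^{-1}\widetilde{\vc{R}}(\vc{I}-\vc{R})^{-1}$, and the final cancellation using $\widetilde{\vc{R}}=\vc{c}_A\vc{\pi}(\vc{I}-\vc{R})/(-\sigma)$---checks out.
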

\proof It follows from (\ref{def-F(k)}) that
\begin{equation}
\sum_{k=0}^{\infty}\vc{F}(k) = (\vc{I} - \vc{R})^{-1}.
\label{eqn-sum-F(k)}
\end{equation}
Further combining (\ref{def-F(k)}) with Lemma~6 in \citet{Jele98} and
(\ref{eqn-sum-F(k)}) yields
\begin{eqnarray*}
\lim_{k \to \infty} {\overline{\vc{F}}(k) \over \PP(Y > k)}
&=&
 (\vc{I} - \vc{R} )^{-1} 
 \lim_{k \to \infty} {\overline{\vc{R}}(k) 
\over \PP(Y > k) }(\vc{I} - \vc{R})^{-1}.
\end{eqnarray*}
From this and (\ref{eq-R-A-Y}), we have (\ref{eq-gamma-Y}). \qed

\medskip

We now provide the proof of Theorem~\ref{thm-original}.

\noindent
{\it Proof of Theorem~\ref{thm-original}.\ }
Applying Proposition~\ref{Masu11-prop} to
(\ref{eq-xk-R0-Gamma}) and using (\ref{eq-R0-B-Y}),
(\ref{eq-gamma-Y}) and (\ref{eqn-sum-F(k)}), we obtain
\[
\lim_{k \to \infty} {\overline{\vc{x}}(k) \over \PP(Y > k) } 
=
{\vc{x}(0) \over -\sigma}
\left[ \vc{c}_B \vc{\pi}  +
\vc{R}_0(\vc{I} - \vc{R})^{-1} \vc{c}_A \vc{\pi}
\right].
\]
Substituting (\ref{eq-bar-x_0-R}) into the above equation yields
(\ref{eq-bar-x-Y}). \qed

\section{Application to BMAP/GI/1 Queue}\label{sec-app-01}

This section discusses the application of the main result to the
standard BMAP/G/1 queue.

\subsection{Model description}\label{subsec-BMAP}

We first introduce the batch Markovian arrival process (BMAP)
(\citealt{Luca91}). Let $\{J(t);t\ge0\}$ denote a Markov chain with
state space $\bbM=\{1,2,\dots,M\}$, which is called background Markov
chain. Let $\{N(t);t \ge 0\}$ denote the counting process of arrivals
from the BMAP.  We assume that the bivariate process
$\{(N(t),J(t));t\ge0\}$ is a Markov chain with state space $\bbZ_+
\times \bbM$ and the following infinitesimal generator $\vc{Q}$:
\begin{equation}
\vc{Q}
= \left(
\begin{array}{ccccc}
\vc{C} & \vc{D}(1) & \vc{D}(2) & \vc{D}(3) & \cdots
\\
\vc{O} & \vc{C}   & \vc{D}(1) & \vc{D}(2) & \cdots
\\
\vc{O} & \vc{O}   & \vc{C}   & \vc{D}(1) & \cdots
\\
\vc{O} & \vc{O}   & \vc{O}   & \vc{C}   & \ddots
\\
\vdots & \vdots   &\vdots    & \ddots   & \ddots
\end{array}
\right),
\label{defn-Q}
\end{equation}
where $\vc{D}(k) \ge \vc{O}$ ($k \in \bbN$), $[\vc{C}]_{i,i} < 0$ ($i
\in \bbM$), $[\vc{C}]_{i,j} \ge 0$ ($i \neq j$, $i,j \in \bbM$) and
$\left( \vc{C} + \sum_{k=1}^{\infty}\vc{D}(k) \right)\vc{e} = \vc{0}$.
Thus the BMAP is characterized by the rate matrices
$\{\vc{C},\vc{D}(1),\vc{D}(2),\dots\}$.

Let $\widehat{\vc{D}}(z) = \sum_{k=1}^{\infty}z^k\vc{D}(k)$ and
$\vc{D} = \widehat{\vc{D}}(1) = \sum_{k=1}^{\infty}\vc{D}(k)$. It then
follows from (\ref{defn-Q}) that
\[
\EE[z^{N(t)} \dd{1}(J(t) = j) \mid J(0) = i] 
= \left[ \exp\{(\vc{C} + \widehat{\vc{D}}(z))t \} \right]_{i,j},
\quad i,j \in \bbM,~t \ge 0,
\]
and that $\vc{C} + \vc{D}$ is the infinitesimal generator of the
background Markov chain $\{J(t);t\ge0\}$.  For analytical convenience,
we assume that $\vc{C} + \vc{D}$ is irreducible, and then define
$\vc{\varpi}:=(\varpi_i)_{i\in\bbM} > \vc{0}$ as the unique stationary
probability vector of $\vc{C} + \vc{D}$. In this setting, the mean
arrival rate, denoted by $\lambda$, is given by
\begin{equation}
\lambda = \vc{\varpi} \sum_{k=1}^{\infty}\vc{D}(k) \vc{e},
\label{defn-lambda}
\end{equation}
which is assumed to be strictly positive (i.e., $\lambda > 0$) in
order to exclude a trivial case.

Customers are served on the first-come-first-served basis, and their
service times are independent and identically distributed
(i.i.d.)\ according to distribution function $H$ with mean $h \in
(0,\infty)$ and $H(0) = 0$. We assume that the offered load $ \rho :=
\lambda h > 0$ satisfies
\[
\rho < 1,
\]
which ensures that the BMAP/GI/1 queue is stable
(\citealt{Loyn62}).

Let $\vc{y}(k)$ denote a $1 \times M$ vector
such that $[\vc{y}(k)]_i = \PP(L = k, J=i)$ for $(k,i) \in \bbZ_+ \times
\bbM$, where $L$ and $J$ denote generic random variables for the
number of customers in the system and the state of the background
Markov chain, respectively, in steady state. It is known that
$\vc{y}:=(\vc{y}(0),\vc{y}(1),\vc{y}(2),\dots)$ is the stationary
probability vector of the following transition probability matrix of
M/G/1 type (\citealt{Taki00}):
\begin{equation}
\vc{T}_{\rm M/G/1}
:=
\left(
\begin{array}{ccccc}
\vc{P}(0) & \vc{P}(1) & \vc{P}(2) & \vc{P}(3) & \cdots
\\
\vc{P}(0) & \vc{P}(1) & \vc{P}(2) & \vc{P}(3) & \cdots
\\
\vc{O}    & \vc{P}(0) & \vc{P}(1) & \vc{P}(2) & \cdots
\\
\vc{O}    & \vc{O}    & \vc{P}(0) & \vc{P}(1) & \cdots
\\
\vdots    & \vdots    &  \vdots   & \vdots   & \ddots
\end{array}
\right),
\label{defn-T_M/G/1}
\end{equation}
where $\vc{P}(k)$ ($k \in \bbZ_+$) denotes an $M \times M$ matrix such
that
\begin{equation}
\widehat{\vc{P}}(z)
:= \sum_{k=0}^{\infty}z^k \vc{P}(k)
= \int_0^{\infty} \exp\{(\vc{C} + \widehat{\vc{D}}(z))x\} \rmd H(x).
\label{defn-hat{P}(z)}
\end{equation}
It is
easy to see that $\vc{T}_{\rm M/G/1}$ is equivalent to $\vc{T}$ in
(\ref{defn-T}) with
\begin{eqnarray}
\vc{A}(k) = 
\left\{
\begin{array}{ll}
\vc{P}(k+1), & k \ge -1,
\\
\vc{O}, & k \le -2,
\end{array}
\right.
\qquad
\vc{B}(k) = 
\left\{
\begin{array}{ll}
\vc{P}(k), & k \in \bbZ_+,
\\
\vc{P}(0),   & k = -1,
\\
\vc{O}, & k \le -2.
\end{array}
\right.
\label{relation-AB-P}
\end{eqnarray}
Note here that (\ref{defn-lambda}), (\ref{defn-hat{P}(z)}) and $\rho =
\lambda h$ yield
\begin{equation}
\vc{\varpi}\sum_{k=1}^{\infty} k \vc{P}(k) \vc{e}
= \vc{\varpi} \widehat{\vc{P}}{}'(1) \vc{e} 
= \vc{\varpi}\sum_{k=1}^{\infty} k \vc{D}(k) \vc{e} 
\cdot \int_0^{\infty} x \rmd H(x) = \lambda h = \rho.
\label{eqn-rho}
\end{equation}

We now define $\vc{P}_{\rme}(k)$ ($k \in \bbZ_+$) as an $M \times M$
matrix such that
\begin{equation}
\widehat{\vc{P}}_{\rme}(z)
:= \sum_{k=0}^{\infty}z^k \vc{P}_{\rme}(k)
= \int_0^{\infty} \exp\{(\vc{C} + \widehat{\vc{D}}(z))x\} \rmd H_{\rme}(x),
\label{defn-hat{A}_e(z)}
\end{equation}
where $H_{\rme}$ is the equilibrium distribution of the service
time distribution $H$. We then have the following
lemma:
\begin{lem}\label{lem-P_e}
\begin{equation}
\overline{\vc{P}}(k)\vc{e} 
= h\cdot\vc{P}_{\rme} \ast \overline{\vc{D}}(k)\vc{e},
\qquad k \in \bbZ_+.
\label{eqn-ooverline{A}(k)e}
\end{equation}
\end{lem}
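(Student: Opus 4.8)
The plan is to pass to generating functions and reduce the claimed identity to a single algebraic relation between $\widehat{\vc{P}}(z)$ and $\widehat{\vc{P}}_{\rme}(z)$. Write $\vc{K}(z) = \vc{C} + \widehat{\vc{D}}(z)$, so that (\ref{defn-hat{P}(z)}) and (\ref{defn-hat{A}_e(z)}) read $\widehat{\vc{P}}(z) = \int_0^{\infty} \rme^{\vc{K}(z)x}\rmd H(x)$ and $\widehat{\vc{P}}_{\rme}(z) = \int_0^{\infty} \rme^{\vc{K}(z)x}\rmd H_{\rme}(x)$. First I would record the elementary tail generating-function formula: for a nonnegative matrix sequence $\{\vc{M}(k);k\in\bbZ_+\}$ with finite sum $\vc{M} = \widehat{\vc{M}}(1)$, interchanging the order of summation gives $\sum_{k=0}^{\infty} z^k \overline{\vc{M}}(k) = (\widehat{\vc{M}}(1) - \widehat{\vc{M}}(z))/(1-z)$ for $0<z<1$. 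Applying this to $\vc{P}$, and using $\widehat{\vc{P}}(1)\vc{e} = \vc{e}$ (which follows from $(\vc{C}+\vc{D})\vc{e}=\vc{0}$, since then $\rme^{(\vc{C}+\vc{D})x}\vc{e}=\vc{e}$), yields $\sum_{k=0}^{\infty} z^k \overline{\vc{P}}(k)\vc{e} = (\vc{e} - \widehat{\vc{P}}(z)\vc{e})/(1-z)$; applying it to $\vc{D}$ (with $\vc{D}(0)=\vc{O}$) yields $\sum_{k=0}^{\infty} z^k \overline{\vc{D}}(k) = (\vc{D} - \widehat{\vc{D}}(z))/(1-z)$. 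Because the generating function of the one-sided convolution is the product of the two generating functions, the right-hand side of (\ref{eqn-ooverline{A}(k)e}) has generating function $h\,\widehat{\vc{P}}_{\rme}(z)(\vc{D} - \widehat{\vc{D}}(z))\vc{e}/(1-z)$. By uniqueness of power-series coefficients, (\ref{eqn-ooverline{A}(k)e}) is therefore equivalent to the single identity
\[
\vc{e} - \widehat{\vc{P}}(z)\vc{e} = h\,\widehat{\vc{P}}_{\rme}(z)(\vc{D} - \widehat{\vc{D}}(z))\vc{e}, \qquad 0 < z < 1.
\]

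The key step is to connect $\widehat{\vc{P}}(z)$ and $\widehat{\vc{P}}_{\rme}(z)$ by integration by parts. Using $\rmd H_{\rme}(x) = \overline{H}(x)\rmd x/h$ and $\rmd H(x) = -\rmd\overline{H}(x)$, I would integrate $\widehat{\vc{P}}(z) = \int_0^{\infty} \rme^{\vc{K}(z)x}\rmd H(x)$ by parts. The boundary term at $x=0$ contributes $\vc{I}$ (since $\overline{H}(0)=1$); the boundary term at $x=\infty$ vanishes because $\rme^{\vc{K}(z)x}$ is substochastic for $0<z\le1$ while $\overline{H}(x)\to0$; and the remaining integral is $\int_0^{\infty} \rme^{\vc{K}(z)x}\vc{K}(z)\overline{H}(x)\rmd x$, where I use that $\vc{K}(z)$ commutes with its own exponential. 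This yields
\[
\widehat{\vc{P}}(z) = \vc{I} + h\,\widehat{\vc{P}}_{\rme}(z)\vc{K}(z).
\]

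Finally I would post-multiply this relation by $\vc{e}$ and simplify $\vc{K}(z)\vc{e}$. Since $(\vc{C}+\vc{D})\vc{e}=\vc{0}$ gives $\vc{C}\vc{e} = -\vc{D}\vc{e}$, we have $\vc{K}(z)\vc{e} = (\vc{C}+\widehat{\vc{D}}(z))\vc{e} = -(\vc{D} - \widehat{\vc{D}}(z))\vc{e}$, whence $\widehat{\vc{P}}(z)\vc{e} = \vc{e} - h\,\widehat{\vc{P}}_{\rme}(z)(\vc{D}-\widehat{\vc{D}}(z))\vc{e}$, which is exactly the displayed equivalent identity; matching coefficients of $z^k$ then delivers (\ref{eqn-ooverline{A}(k)e}). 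The main obstacle I anticipate is the justification of the integration by parts, specifically the vanishing of the boundary term at infinity and the interchange of $\vc{K}(z)$ with the exponential inside the integral, both of which rest on the substochasticity of $\rme^{\vc{K}(z)x}$ for $0<z\le1$; everything else is routine bookkeeping with generating functions.
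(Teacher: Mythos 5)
Your proposal is correct and follows essentially the same route as the paper: both establish the integration-by-parts identity $\widehat{\vc{P}}_{\rme}(z)(-\vc{C}-\widehat{\vc{D}}(z)) = h^{-1}(\vc{I}-\widehat{\vc{P}}(z))$, post-multiply by $\vc{e}$, divide by $1-z$ to obtain the tail generating functions, and match coefficients. The only cosmetic difference is that you derive the key identity by integrating $\widehat{\vc{P}}(z)$ by parts directly, whereas the paper integrates the expression for $\widehat{\vc{P}}_{\rme}(z)$ after post-multiplying by $-\vc{C}-\widehat{\vc{D}}(z)$; these are the same computation.
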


\proof Post-multiplying both sides of (\ref{defn-hat{A}_e(z)}) by
$-\vc{C} - \widehat{\vc{D}}(z)$ and integrating the right hand side by
parts yield
\begin{equation}
\widehat{\vc{P}}_{\rme}(z)(-\vc{C} - \widehat{\vc{D}}(z))
= h^{-1}(\vc{I} - \widehat{\vc{P}}(z)),
\qquad |z| < 1.
\label{eqn-hat{A}_e(z)}
\end{equation}
It follows from (\ref{eqn-hat{A}_e(z)}) and $-\vc{C}\vc{e} =
\vc{D}\vc{e} = \widehat{\vc{D}}(1)\vc{e}$ that
\begin{equation}
\widehat{\vc{P}}_{\rme}(z)
{\widehat{\vc{D}}(1)\vc{e} -  \widehat{\vc{D}}(z)\vc{e} \over 1 - z}
= h^{-1}{\vc{e} -  \widehat{\vc{P}}(z)\vc{e} \over 1 - z},
\qquad |z| < 1.
\label{eqn-hat{A}_e(z)-02}
\end{equation}
Note here that
\[
\sum_{k=0}^{\infty}z^k\overline{\vc{D}}(k)\vc{e}
= {\widehat{\vc{D}}(1)\vc{e} -  \widehat{\vc{D}}(z)\vc{e} \over 1 - z},
\qquad
\sum_{k=0}^{\infty}z^k\overline{\vc{P}}(k)\vc{e}
= {\vc{e} -  \widehat{\vc{P}}(z)\vc{e} \over 1 - z}.
\]
Substituting these equations into (\ref{eqn-hat{A}_e(z)-02}), we have
\[
\widehat{\vc{P}}_{\rme}(z)\sum_{k=0}^{\infty}z^k\overline{\vc{D}}(k)\vc{e}
= h^{-1}\sum_{k=0}^{\infty}z^k\overline{\vc{P}}(k)\vc{e},
\]
and thus
\[
\overline{\vc{P}}(k)\vc{e} 
= h\cdot \sum_{l=0}^k\vc{P}_{\rme}(l) \overline{\vc{D}}(k-l)\vc{e},
\qquad k \in \bbZ_+,
\]
which shows that (\ref{eqn-ooverline{A}(k)e}) holds.
\qed

\subsection{Asymptotic formulas for the queue length}

In this subsection, we present some subexponential asymptotic formulas
for the stationary queue length distribution of the BMAP/GI/1 queue.
For this purpose, we use the following result:
\begin{coro}\label{coro-BMAP}
Suppose that there exists some random variable $Y$ in $\bbZ_+$ such
that $Y \in \calS$ and
\begin{equation}
\lim_{k\to\infty}{\ooverline{\vc{P}}(k)\vc{e} \over \PP(Y > k)} 
= \vc{c} \ge \vc{0}, \neq \vc{0}.
\label{asymp-P(k)}
\end{equation}
We then have
\begin{equation}
\overline{\vc{y}}(k) 
\simhm{k} {\vc{\varpi}\vc{c} \over 1 - \rho}\vc{\varpi} 
\cdot \PP(Y > k).
\label{asymp-y(k)}
\end{equation}

\end{coro}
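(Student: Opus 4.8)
The plan is to derive Corollary~\ref{coro-BMAP} as a direct application of Theorem~\ref{thm-original} to the M/G/1-type Markov chain $\vc{T}_{\rm M/G/1}$ in (\ref{defn-T_M/G/1}), which represents the BMAP/GI/1 queue length process. The key observation is that $\vc{T}_{\rm M/G/1}$ is a special case of the general chain $\vc{T}$ via the identifications (\ref{relation-AB-P}). So the strategy is to verify the two hypotheses of Theorem~\ref{thm-original} under the single tail assumption (\ref{asymp-P(k)}), then translate the resulting limit (\ref{eq-bar-x-Y}) back into the queueing notation $\vc{y}(k)$, $\vc{\varpi}$, $\rho$.

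First I would check Assumption~\ref{assu-1}. Conditions (i) and (iii) hold because $\vc{C}+\vc{D}$ is irreducible, so $\vc{A}=\sum_k\vc{A}(k)=\sum_k\vc{P}(k)=\widehat{\vc{P}}(1)$ is stochastic and irreducible; its stationary vector is $\vc{\varpi}$, i.e., $\vc{\pi}=\vc{\varpi}$. The drift condition (v) follows from (\ref{eqn-rho}): since $\vc{A}(k)=\vc{P}(k+1)$, a shift computation gives $\sigma=\vc{\varpi}\sum_{k}k\vc{A}(k)\vc{e}=\vc{\varpi}\sum_{k\ge1}(k-1)\vc{P}(k)\vc{e}=\rho-1<0$ because $\rho<1$. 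Conditions (ii) and (iv) are finite-mean requirements that follow from the finiteness of moments implicit in the setup. Next I would verify Assumption~\ref{assu-tail-A(k)e-B(k)e} for $Y$. Using (\ref{relation-AB-P}), $\ooverline{\vc{A}}(k)\vc{e}$ and $\ooverline{\vc{B}}(k)\vc{e}$ are both, up to the shift by one level, the double tail $\ooverline{\vc{P}}(k)\vc{e}$. Since (\ref{asymp-P(k)}) posits $\ooverline{\vc{P}}(k)\vc{e}\simhm{k}\vc{c}\,\PP(Y>k)$ with $Y\in\calS\subset\calL$, the long-tailed property $\PP(Y>k-1)\simhm{k}\PP(Y>k)$ absorbs the index shift. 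Hence (\ref{eqn-lim-A(k)e}) holds with $\vc{c}_A=\vc{c}$ and $\vc{c}_B=\vc{c}$ (the block structure means the $B$-row tails coincide asymptotically with the $A$-row tails).

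With both hypotheses confirmed, Theorem~\ref{thm-original} yields
\[
\lim_{k\to\infty}\frac{\overline{\vc{y}}(k)}{\PP(Y>k)}
=\frac{\vc{y}(0)\vc{c}_B+\overline{\vc{y}}(0)\vc{c}_A}{-\sigma}\cdot\vc{\pi}.
\]
The final step is to simplify the constant. Substituting $\vc{\pi}=\vc{\varpi}$, $-\sigma=1-\rho$, and $\vc{c}_A=\vc{c}_B=\vc{c}$, the numerator becomes $(\vc{y}(0)+\overline{\vc{y}}(0))\vc{c}=\vc{y}\vc{e}$-weighted, and since $\vc{y}(0)+\overline{\vc{y}}(0)=\sum_{k\ge0}\vc{y}(k)=\vc{\varpi}$ (the marginal phase distribution of the stationary chain is the BMAP stationary vector), the scalar factor collapses to $\vc{\varpi}\vc{c}$. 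This produces exactly (\ref{asymp-y(k)}).

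\emph{The main obstacle} I anticipate is the identity $\vc{y}(0)+\overline{\vc{y}}(0)=\vc{\varpi}$, i.e., that the phase-marginal of the queue-length stationary distribution equals the background stationary vector. This requires a separate argument: summing the stationarity equation $\vc{y}\vc{T}_{\rm M/G/1}=\vc{y}$ over levels and using $\sum_k\vc{P}(k)=\widehat{\vc{P}}(1)$ together with the fact that $\vc{\varpi}$ is the unique invariant vector of the (stochastic, irreducible) matrix $\widehat{\vc{P}}(1)$. Care is needed because the zeroth block row of $\vc{T}_{\rm M/G/1}$ duplicates the first, so the level-marginal computation must be done cleanly; but the result is standard for M/G/1-type chains and should follow by verifying that $\sum_{k\ge0}\vc{y}(k)$ is left-invariant under $\widehat{\vc{P}}(1)$. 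All the remaining steps are the routine shift-and-long-tail manipulations already illustrated in (\ref{lim-overline{A}(2n)e})--(\ref{lim-overline{A}(2n+1)e}).
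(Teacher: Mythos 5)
Your proposal is correct and follows essentially the same route as the paper: identify $\vc{T}_{\rm M/G/1}$ with $\vc{T}$ via (\ref{relation-AB-P}), check that $\vc{\pi}=\vc{\varpi}$, $\sigma=\rho-1$, and $\vc{c}_A=\vc{c}_B=\vc{c}$ (using $Y\in\calL$ to absorb the one-level index shift), then apply Theorem~\ref{thm-original} together with $\vc{y}(0)+\overline{\vc{y}}(0)=\vc{\varpi}$. The paper simply asserts this last marginal identity as $\PP(J=i)=\varpi_i$ rather than re-deriving it from the stationarity equation, but otherwise the two arguments coincide.
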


\proof Recall that $\vc{T}_{\rm M/G/1}$ in (\ref{defn-T_M/G/1}) is
equivalent to $\vc{T}$ in (\ref{defn-T}) with block matrices
$\vc{A}(k)$ and $\vc{B}(k)$ ($k \in \bbZ$) satisfying
(\ref{relation-AB-P}). Recall also that $\vc{\varpi}$ is the
stationary probability vector of $\vc{C} + \vc{D}$. Thus
(\ref{defn-hat{P}(z)}) implies that $\vc{\varpi}$ satisfies
$\vc{\varpi}\widehat{\vc{P}}(1) = \vc{\varpi}$ and corresponds to the
stationary probability vector $\vc{\pi}$ of
$\vc{A}=\sum_{k\in\bbZ}\vc{A}(k)$. Combining these facts with
(\ref{relation-AB-P}), (\ref{eqn-rho}) and (\ref{asymp-P(k)}), we have
\begin{eqnarray*}
\ooverline{\vc{A}}(k)\vc{e}
&\simhm{k}& \ooverline{\vc{B}}(k)\vc{e}
\simhm{k} \vc{c} \cdot \PP(Y > k),
\\
\sigma 
&=& \vc{\varpi}\sum_{k=0}^{\infty}(k-1)\vc{P}(k) \vc{e}
= \rho - 1.
\end{eqnarray*}
Therefore (\ref{asymp-y(k)}) follows from Theorem~\ref{thm-original}
and $[\vc{y}(0)]_i + [\overline{\vc{y}}(0)]_i = \PP(J=i) = \varpi_i$
($i\in\bbM$). \qed

\medskip

In the following, we
consider three cases: (i) the service time distribution is
light-tailed; (ii) second-order long-tailed; and (iii)
consistently varying.

\subsubsection{Light-tailed service time}

Let $G$ denote a random variable in $\bbZ_+$ such that
$\PP(G = 0) = 0$ and
\begin{equation}
\PP(G = k) = {\vc{\varpi}\vc{D}(k)\vc{e} \over \lambda_G},
\qquad k \in \bbN,
\label{defn-G}
\end{equation}
where $\lambda_G$ is the arrival rate of batches, i.e., $\lambda_G =
\vc{\varpi}\vc{D}\vc{e}$. From the definition of $G$, we have $\EE[G]
= \lambda / \lambda_G$ and thus
\begin{equation}
\PP(G_{\rm de} > k)
= {\vc{\varpi}\ooverline{\vc{D}}(k)\vc{e} \over \lambda},
\qquad k \in \bbZ_+.
\label{eqn-Y_e}
\end{equation}

We now make the following assumption:
\begin{assumpt}\label{assumpt-G}
There exists some $\widetilde{\vc{d}}_G \ge \vc{0}, \neq \vc{0}$ such
that
\begin{equation}
\lim_{k\to\infty}{\ooverline{\vc{D}}(k)\vc{e} \over \PP(G_{\rm de} > k)}
= \widetilde{\vc{d}}_G.
\label{lim-ooverline{D}(k)e}
\end{equation}
\end{assumpt}

\begin{thm}\label{thm-BMAP-GI-1-00}
Suppose that $H$ is light-tailed, i.e., $\int_0^{\infty} \rme^{\delta
  x} \rmd H(x) < \infty$ for some $\delta > 0$. Further if
Assumption~\ref{assumpt-G} holds and $G_{\rm de} \in \calS$, then
\begin{equation}
\ooverline{\vc{P}}(k)\vc{e} 
\simhm{k} h \widehat{\vc{P}}_{\rme}(1) \widetilde{\vc{d}}_G\cdot \PP(G_{\rm de} > k),
\label{asymp-ooverline{A}(k)-03}
\end{equation}
and
\begin{equation}
\PP(L > k, J = i)
\simhm{k} {\rho \over 1 - \rho}\varpi_i \cdot \PP(G_{\rm de} > k).
\label{asymp-overline{p}(k)-00}
\end{equation}
\end{thm}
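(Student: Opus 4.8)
The plan is to read off the tail of $\ooverline{\vc{P}}(k)\vc{e}$ directly from Lemma~\ref{lem-P_e} by recognizing it as a subexponential convolution and invoking Proposition~\ref{Masu11-prop}. Lemma~\ref{lem-P_e} states $\overline{\vc{P}}(k)\vc{e}=h\,\vc{P}_{\rme}\ast\overline{\vc{D}}(k)\vc{e}$, so setting $\vc{M}(k)=h\vc{P}_{\rme}(k)$ and $\vc{N}(k)=\overline{\vc{D}}(k)\vc{e}$ gives $\overline{\vc{P}}(k)\vc{e}=\vc{M}\ast\vc{N}(k)$ and hence
\[
\ooverline{\vc{P}}(k)\vc{e}=\sum_{l=k+1}^{\infty}\overline{\vc{P}}(l)\vc{e}=\overline{\vc{M}\ast\vc{N}}(k).
\]
This is exactly the left-hand side appearing in Proposition~\ref{Masu11-prop}, which I would apply with the subexponential reference variable $U=G_{\rm de}\in\calS$ furnished by hypothesis.

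To apply Proposition~\ref{Masu11-prop} I would verify three things. (a) The aggregate matrices $\vc{M}=\sum_k\vc{M}(k)=h\widehat{\vc{P}}_{\rme}(1)$ and $\vc{N}=\sum_k\overline{\vc{D}}(k)\vc{e}=\sum_{l}l\vc{D}(l)\vc{e}$ are finite: $\widehat{\vc{P}}_{\rme}(1)\vc{e}=\vc{e}$ because $\vc{C}+\vc{D}$ is a generator, while $\vc{\varpi}\sum_l l\vc{D}(l)\vc{e}=\lambda<\infty$ (cf.\ (\ref{eqn-rho})) together with $\vc{\varpi}>\vc{0}$ forces $\sum_l l\vc{D}(l)\vc{e}$ to be finite componentwise. (b) The $\vc{N}$-side tail is supplied by Assumption~\ref{assumpt-G}: since $\overline{\vc{N}}(k)=\ooverline{\vc{D}}(k)\vc{e}$, equation (\ref{lim-ooverline{D}(k)e}) gives $\overline{\vc{N}}(k)/\PP(G_{\rm de}>k)\to\widetilde{\vc{d}}_G$. (c) The $\vc{M}$-side tail satisfies $\overline{\vc{M}}(k)=h\overline{\vc{P}}_{\rme}(k)=o(\PP(G_{\rm de}>k))$, so that its normalized limit is $\vc{O}$ (which Proposition~\ref{Masu11-prop} permits). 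Granting (c), the proposition yields $\ooverline{\vc{P}}(k)\vc{e}/\PP(G_{\rm de}>k)\to\vc{O}\cdot\vc{N}+h\widehat{\vc{P}}_{\rme}(1)\widetilde{\vc{d}}_G$, which is precisely (\ref{asymp-ooverline{A}(k)-03}).

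The main obstacle is exactly claim (c), and this is the only place the light-tailedness of $H$ enters; the mechanism is a ``one extra integration'' gap. On one hand, $\vc{P}_{\rme}(k)$ counts BMAP arrivals during an equilibrium service time $H_{\rme}$, and since $H$ (hence $H_{\rme}$) is light-tailed, a large number of arrivals is produced essentially by a single large batch; a single-big-jump estimate—conditioning on the service time and using the light tail to dominate the integrand as the service time is integrated out—gives $\overline{\vc{P}}_{\rme}(k)\vc{e}=O(\PP(G>k))$, where $\PP(G>k)=\vc{\varpi}\overline{\vc{D}}(k)\vc{e}/\lambda_G$ is the single-batch tail. On the other hand, from the definition of the discretized equilibrium variable together with (\ref{defn-G}) we have $\PP(G_{\rm de}=k)=\PP(G>k)/\EE[G]$, and $G_{\rm de}\in\calS\subset\calL$ gives $\PP(G_{\rm de}>k-1)\simhm{k}\PP(G_{\rm de}>k)$, whence $\PP(G>k)=\EE[G]\,\PP(G_{\rm de}=k)=o(\PP(G_{\rm de}>k))$. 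Combining the two displays yields $\overline{\vc{P}}_{\rme}(k)=o(\PP(G_{\rm de}>k))$. Making the single-big-jump step uniform as the service time is integrated out is the delicate part, and light-tailedness of $H$ is what secures the required integrability.

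Finally, (\ref{asymp-overline{p}(k)-00}) follows by substituting (\ref{asymp-ooverline{A}(k)-03}) into Corollary~\ref{coro-BMAP} with $Y=G_{\rm de}\in\calS$ and $\vc{c}=h\widehat{\vc{P}}_{\rme}(1)\widetilde{\vc{d}}_G$. The prefactor $\vc{\varpi}\vc{c}$ collapses to a scalar: $\vc{\varpi}(\vc{C}+\vc{D})=\vc{0}$ gives $\vc{\varpi}\widehat{\vc{P}}_{\rme}(1)=\vc{\varpi}\int_0^{\infty}\exp\{(\vc{C}+\vc{D})x\}\rmd H_{\rme}(x)=\vc{\varpi}$, while (\ref{eqn-Y_e}) and (\ref{lim-ooverline{D}(k)e}) give $\vc{\varpi}\widetilde{\vc{d}}_G=\lim_{k\to\infty}\vc{\varpi}\ooverline{\vc{D}}(k)\vc{e}/\PP(G_{\rm de}>k)=\lambda$; hence $\vc{\varpi}\vc{c}=h\lambda=\rho$. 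Corollary~\ref{coro-BMAP} then returns $[\overline{\vc{y}}(k)]_i=\PP(L>k,J=i)\simhm{k}\tfrac{\rho}{1-\rho}\varpi_i\,\PP(G_{\rm de}>k)$, which is (\ref{asymp-overline{p}(k)-00}).
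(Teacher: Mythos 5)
Your overall architecture is the same as the paper's: read $\ooverline{\vc{P}}(k)\vc{e}$ off Lemma~\ref{lem-P_e} as a convolution tail, feed it to Proposition~\ref{Masu11-prop} with reference variable $G_{\rm de}\in\calS$, reduce everything to the claim $\overline{\vc{P}}_{\rme}(k)=o(\PP(G_{\rm de}>k))$, and finish via Corollary~\ref{coro-BMAP} with $\vc{\varpi}\widehat{\vc{P}}_{\rme}(1)=\vc{\varpi}$ and $\vc{\varpi}\widetilde{\vc{d}}_G=\lambda$. Steps (a), (b) and the final paragraph are fine.

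The gap is in your step (c). You propose to prove $\overline{\vc{P}}_{\rme}(k)\vc{e}=O(\PP(G>k))$ by a single-big-jump estimate on the batches arriving during an equilibrium service time. But the single-big-jump principle for a random sum of batch sizes is essentially equivalent to the subexponentiality of the batch-size distribution $G$ itself: without $G\in\calS$ one cannot bound $\overline{F_G^{\ast n}}(k)$ by a constant times $\overline{F}_G(k)$ (for $G\in\calL\setminus\calS$ the ratio $\overline{F_G^{\ast 2}}(k)/\overline{F}_G(k)$ can be unbounded). The theorem deliberately assumes only $G_{\rm de}\in\calS$ --- this is exactly the weakening over \citet{Masu09} that the paper emphasizes after the theorem --- so your estimate is not justified under the stated hypotheses and may fail. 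The paper circumvents this by uniformization: it writes $\overline{\vc{P}}_{\rme}(k)=\int_0^{\infty}\sum_{n\ge1}\rme^{-\theta x}{(\theta x)^n \over n!}\,\rmd H_{\rme}(x)\,\overline{\vc{\varLambda}^{\ast n}}(k)$ with $\vc{\varLambda}(0)=\vc{I}+\theta^{-1}\vc{C}$, $\vc{\varLambda}(k)=\theta^{-1}\vc{D}(k)$, shows the \emph{one-step} tail satisfies $\overline{\vc{\varLambda}}(k)\vc{e}=O(\PP(G>k))=o(\PP(G_{\rm de}>k))$ (only here does the single-batch tail enter), then upgrades to $\overline{\vc{\varLambda}^{\ast n}}(k)=o(\PP(G_{\rm de}>k))$ for each $n$ by Proposition~\ref{Masu11-prop} applied with reference $G_{\rm de}$ and both limits equal to $\vc{O}$ --- never claiming any bound in terms of $\PP(G>k)$ for $n\ge 2$. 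The light-tailedness of $H$ (equivalently of $H_{\rme}$) is then used to dominate the Poisson mixture weights and interchange the limit with the infinite sum over $n$ (as in Lemma~3.5 of \citealt{Masu09}). If you replace your single-big-jump step with this uniformization argument, the rest of your proof goes through.
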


\proof It follows from (\ref{defn-hat{A}_e(z)}) and
$\vc{\varpi}(\vc{C} + \vc{D}) = \vc{0}$ that
\begin{equation}
\vc{\varpi} \widehat{\vc{P}}_{\rme}(1) = \vc{\varpi},
\label{add-eqn-03a}
\end{equation}
and from (\ref{eqn-Y_e}) and (\ref{lim-ooverline{D}(k)e}) that
\begin{equation}
\vc{\varpi}\widetilde{\vc{d}}_G = \lambda.
\label{add-eqn-03b}
\end{equation}
Thus if (\ref{asymp-ooverline{A}(k)-03}) holds, then
(\ref{add-eqn-03a}), (\ref{add-eqn-03b}) and Corollary~\ref{coro-BMAP}
yield
\[
\overline{\vc{y}}(k)
\simhm{k} {\rho \over 1 - \rho} \vc{\varpi} 
\cdot\PP(G_{\rm de} > k),
\]
which shows that (\ref{asymp-overline{p}(k)-00}) holds.

In what follows, we prove (\ref{asymp-ooverline{A}(k)-03}). Let
$\vc{\varLambda}(k)$ ($k \in \bbZ_+$) denote
\begin{equation}
\vc{\varLambda}(k) = 
\left\{
\begin{array}{ll}
\vc{I} + \theta^{-1}\vc{C},	& k = 0,
\\
\theta^{-1}\vc{D}(k), 		& k \in \bbN,
\end{array}
\right.
\label{defn-varGamma(k)}
\end{equation}
where $\theta = \max_{j \in \bbM}|[\vc{C}]_{j,j}|$. We then rewrite
(\ref{defn-hat{A}_e(z)}) as
\begin{eqnarray*}
\sum_{k=0}^{\infty} z^k \vc{P}_\rme(k)
= 
\int_0^{\infty} \sum_{n=0}^{\infty} 
\rme^{-\theta x}{(\theta x)^n \over n!}\rmd H_\rme(x)
\left[\sum_{k=0}^{\infty} z^k \vc{\varLambda}(k)\right]^n,
%\label{eqn-sum-z^kA_k}
\end{eqnarray*}
which implies that
\begin{equation}
\overline{\vc{P}}_\rme(k)
= 
\int_0^{\infty} \sum_{n=1}^{\infty} 
e^{-\theta x}{(\theta x)^n \over n!}\rmd H_\rme(x)
\overline{\vc{\varLambda}^{\ast n}}(k),
\qquad k \in \bbZ_+.
\label{eqn-overline{A_k}}
\end{equation}

According to Corollary 3.3 in \cite{Sigm99}, $G_{\rm de} \in \calS \subset \calL$
implies $\PP(G > k) = o(\PP(G_{\rm de} > k))$. It thus follows from
(\ref{defn-G}), (\ref{eqn-Y_e}), (\ref{defn-varGamma(k)}) and
$\vc{\varpi} > \vc{0}$ that for $i\in\bbM$,
\begin{eqnarray}
[\overline{\vc{\varLambda}}(k)\vc{e}]_i 
&=& {\lambda_G \over \theta} {[\overline{\vc{D}}(k)\vc{e}]_i  \over \lambda_G }
\le {\lambda_G \over \theta\varpi_i} 
{\vc{\varpi}\overline{\vc{D}}(k)\vc{e}  \over \lambda_G }
\nonumber
\\
&=& {\lambda_G \over \theta\varpi_i}\PP(G > k)
= o(\PP(G_{\rm de} > k)).
\end{eqnarray}
Using this and Proposition~\ref{Masu11-prop}, we obtain 
\begin{equation}
\overline{\vc{\varLambda}^{\ast n}}(k) = o(\PP(G_{\rm de} > k)),
\qquad n \in \bbN.
\label{add-eqn-01}
\end{equation}
Note here that $H$ is light-tailed if and only if $H_\rme$ is
light-tailed.  Therefore similarly to the proof of Lemma 3.5 in
\cite{Masu09}, we can readily prove from (\ref{eqn-overline{A_k}}) and
(\ref{add-eqn-01}) that
\begin{equation}
\overline{\vc{P}}_\rme(k)= o(\PP(G_{\rm de} > k)).
\label{add-eqn-02}
\end{equation}
As a result, we obtain (\ref{asymp-ooverline{A}(k)-03}) by applying
Proposition~\ref{Masu11-prop} to (\ref{eqn-ooverline{A}(k)e}) and
using (\ref{lim-ooverline{D}(k)e}) and (\ref{add-eqn-02}). \qed

\medskip

\citet{Masu09} present a similar result:
\begin{prop}[\citealt{Masu09}, Theorem~3.2]\label{Masu09-prop-00}
Suppose that (i) $H$ is light-tailed; and (ii) there exists some
$\widetilde{\vc{D}} \ge \vc{O}, \neq \vc{O}$ such that
$\overline{\vc{D}}(k) \simhm{k} \widetilde{\vc{D}} \PP(G > k)$.
Further if $G \in \calS$ and $G_{\rm de} \in \calS$, then
(\ref{asymp-overline{p}(k)-00}) holds.
\end{prop}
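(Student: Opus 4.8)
The plan is to obtain Proposition~\ref{Masu09-prop-00} as a direct corollary of Theorem~\ref{thm-BMAP-GI-1-00}, by checking that the hypotheses of the former are strictly stronger than those of the latter. Observe first that the two results have the \emph{same} conclusion~(\ref{asymp-overline{p}(k)-00}) and share the assumptions that $H$ is light-tailed and that $G_{\rm de}\in\calS$. Hence the only thing to establish is that condition~(ii) of Proposition~\ref{Masu09-prop-00}, i.e.\ the single-tail relation $\overline{\vc{D}}(k)\simhm{k}\widetilde{\vc{D}}\,\PP(G>k)$ with $\widetilde{\vc{D}}\ge\vc{O},\neq\vc{O}$, implies the integrated-tail condition~(\ref{lim-ooverline{D}(k)e}) of Assumption~\ref{assumpt-G}; the extra hypothesis $G\in\calS$ will turn out to be unnecessary for this route.

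First I would post-multiply the single-tail relation by $\vc{e}$ to get $\overline{\vc{D}}(k)\vc{e}\simhm{k}\widetilde{\vc{D}}\vc{e}\,\PP(G>k)$, read componentwise, with the understanding that a vanishing component $[\widetilde{\vc{D}}\vc{e}]_i=0$ means $[\overline{\vc{D}}(k)\vc{e}]_i=o(\PP(G>k))$. Next I would pass to tail sums. Because the mean batch size $\EE[G]=\lambda/\lambda_G$ is finite (which is precisely what makes $G_{\rm de}$ well-defined), both $\sum_{l>k}\overline{\vc{D}}(l)\vc{e}$ and $\sum_{l>k}\PP(G>l)$ converge, and a routine $\varepsilon$-sandwich shows that componentwise asymptotic equivalence---together with the $o$-relation for the vanishing components---is preserved under tail summation. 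This gives
\[
\ooverline{\vc{D}}(k)\vc{e}
=\sum_{l>k}\overline{\vc{D}}(l)\vc{e}
\simhm{k}
\widetilde{\vc{D}}\vc{e}\sum_{l>k}\PP(G>l).
\]

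Then I would rewrite the scalar tail sum through $G_{\rm de}$: by the definition of the discretized equilibrium random variable in Section~\ref{sec-preliminary}, $\PP(G_{\rm de}=j)=\PP(G>j)/\EE[G]$, so that $\sum_{l>k}\PP(G>l)=\EE[G]\,\PP(G_{\rm de}>k)$ and therefore
\[
\ooverline{\vc{D}}(k)\vc{e}\simhm{k}\EE[G]\,\widetilde{\vc{D}}\vc{e}\cdot\PP(G_{\rm de}>k).
\]
Since $\widetilde{\vc{D}}\ge\vc{O},\neq\vc{O}$ forces $\widetilde{\vc{D}}\vc{e}\ge\vc{0},\neq\vc{0}$, this is exactly Assumption~\ref{assumpt-G} with $\widetilde{\vc{d}}_G=\EE[G]\,\widetilde{\vc{D}}\vc{e}$. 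Combining this with the shared hypotheses ($H$ light-tailed, $G_{\rm de}\in\calS$), all the assumptions of Theorem~\ref{thm-BMAP-GI-1-00} are in force, and (\ref{asymp-overline{p}(k)-00}) follows immediately.

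The only delicate point, and thus the main (if mild) obstacle, is the interchange of the asymptotic relation with the tail summation; care is needed for those phases $i$ with $[\widetilde{\vc{D}}\vc{e}]_i=0$, where one must verify that the $o$-relation persists under summation, again by sandwiching. Everything else is bookkeeping. Finally, the oscillating example constructed earlier (in comparing Theorem~\ref{thm-original} with Proposition~\ref{prop-thm3.1-Kimu13}) shows that the integrated-tail condition does not conversely imply the single-tail one, which is precisely the sense in which Theorem~\ref{thm-BMAP-GI-1-00} properly generalizes Proposition~\ref{Masu09-prop-00}.
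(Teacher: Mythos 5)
Your proposal is correct, and it is exactly the route the paper itself indicates: the paper does not reprove Proposition~\ref{Masu09-prop-00} (it is quoted from Masuyama et al.\ (2009), where it is established by a direct analysis of the BMAP/GI/1 queue), but the discussion immediately following it asserts precisely the implication you verify, namely that condition~(ii) of the proposition implies Assumption~\ref{assumpt-G}, so that the proposition becomes a special case of Theorem~\ref{thm-BMAP-GI-1-00} with the hypothesis $G\in\calS$ superfluous. Your only nontrivial step --- passing from $\overline{\vc{D}}(k)\vc{e}\simhm{k}\widetilde{\vc{D}}\vc{e}\,\PP(G>k)$ to $\ooverline{\vc{D}}(k)\vc{e}\simhm{k}\EE[G]\,\widetilde{\vc{D}}\vc{e}\,\PP(G_{\rm de}>k)$ --- is sound: since $\EE[G]=\lambda/\lambda_G<\infty$, both tail sums converge, the $\varepsilon$-sandwich preserves componentwise equivalence (and the $o$-relation on phases where $[\widetilde{\vc{D}}\vc{e}]_i=0$), and $\sum_{l>k}\PP(G>l)=\EE[G]\,\PP(G_{\rm de}>k)$ by the definition of the discretized equilibrium variable; moreover $\widetilde{\vc{D}}\ge\vc{O},\neq\vc{O}$ forces $\widetilde{\vc{D}}\vc{e}\neq\vc{0}$, so (\ref{lim-ooverline{D}(k)e}) holds with $\widetilde{\vc{d}}_G=\EE[G]\widetilde{\vc{D}}\vc{e}$ and (\ref{asymp-overline{p}(k)-00}) follows from Theorem~\ref{thm-BMAP-GI-1-00}. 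What your derivation buys over the original direct proof is economy and the observation that $G\in\calS$ is not needed; what it costs is circularity as a reconstruction of the cited 2009 result, since that theorem predates (and is used to motivate) the present paper's machinery --- but as a verification that the proposition is subsumed by Theorem~\ref{thm-BMAP-GI-1-00}, it is complete.
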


Theorem~\ref{thm-BMAP-GI-1-00} shows that the condition $G \in \calS$
in Proposition~\ref{Masu09-prop-00} is not necessary for the
subexponential asymptotic formula (\ref{asymp-overline{p}(k)-00}). In
addition, condition (ii) of Proposition~\ref{Masu09-prop-00} implies
Assumption~\ref{assumpt-G} whereas its converse does not. This fact is
confirmed similarly to the comparison of Theorem~\ref{thm-original}
and Proposition~\ref{prop-thm3.1-Kimu13} in
Section~\ref{sec-main-results}. As a result, the conditions of
Proposition~\ref{Masu09-prop-00} are more restrictive than those of
Theorem~\ref{thm-BMAP-GI-1-00}.

\subsubsection{Second-order long-tailed service time}

\begin{thm}\label{thm-BMAP-GI-1-01}
Suppose that (i) $H_{\rme} \in \calL^{\mu}$ for some $\mu \ge 2$; and
(ii) $\sum_{k=1}^{\infty}\rme^{Q(k)}\vc{D}(k) < \infty$ for some
cumulative hazard function $Q \in \SC$ such that $x^{1/\mu} =
O(Q(x))$. We then have
\begin{equation}
\overline{\vc{P}}_{\rme}(k) 
\simhm{k}  \vc{e} \vc{\varpi} \cdot \overline{H}_{\rme}(k/\lambda).
\label{asymp-overline{A}_{e}(k)}
\end{equation}
In addition, if (iii) $H_{\rme} \in \calS$, then
\begin{equation}
\ooverline{\vc{P}}(k)\vc{e} 
\simhm{k} 
\rho \vc{e} \cdot 
\overline{H}_{\rme}(k/\lambda),
\label{asymp-ooverline{A}(k)}
\end{equation}
and 
\begin{equation}
\PP(L > k, J = i)
\simhm{k} {\rho \over 1 - \rho}\varpi_i \cdot \overline{H}_{\rme}(k/\lambda).
\label{asymp-overline{p}(k)-01}
\end{equation}
\end{thm}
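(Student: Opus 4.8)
The plan is to prove the three displayed asymptotics in order, the first being the substantive one and the other two following from it by the convolution machinery already assembled in Sections~\ref{sec-preliminary} and \ref{sec-app-01}. Since $[\exp\{(\vc{C}+\widehat{\vc{D}}(z))x\}]_{i,j}=\EE[z^{N(x)}\dd{1}(J(x)=j)\mid J(0)=i]$, the definition (\ref{defn-hat{A}_e(z)}) yields the representation
\[
[\overline{\vc{P}}_{\rme}(k)]_{i,j}
=\int_0^{\infty}\PP\big(N(x)>k,\,J(x)=j\mid J(0)=i\big)\,\rmd H_{\rme}(x),
\]
so that $\vc{P}_{\rme}$ counts BMAP arrivals over an equilibrium service time. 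The heuristic behind (\ref{asymp-overline{A}_{e}(k)}) is that $N(x)\approx\lambda x$, so $\{N(x)>k\}$ is essentially $\{x>k/\lambda\}$, while a long service lets the background chain reach equilibrium, producing the factor $\vc{e}\vc{\varpi}$.

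To make this precise I would write $u_k=k/\lambda$ and split the integral at $u_k^{\pm}=u_k\pm a_k$ with $a_k$ of order $u_k^{1-1/\mu}$. The transition region $[u_k^-,u_k^+]$ carries $H_{\rme}$-mass $\overline{H}_{\rme}(u_k^-)-\overline{H}_{\rme}(u_k^+)=o(\overline{H}_{\rme}(u_k))$: this is exactly the shift-insensitivity $\overline{H}_{\rme}(u_k\pm a_k)\simhm{k}\overline{H}_{\rme}(u_k)$ granted by $H_{\rme}\in\calL^{\mu}$ through Proposition~\ref{rem-higher-order-L}(iii), and the integrand there is at most one. On the upper region $x>u_k^+$ one has $\lambda x>k+\lambda a_k$, so $\{N(x)\le k\}$ is a downward deviation of $N(x)$ from its mean $\lambda x$ of size $\gtrsim a_k$; since the typical fluctuation of $N(x)$ is only of order $x^{1/2}$ and $a_k\sim u_k^{1-1/\mu}\ge u_k^{1/2}$ precisely because $\mu\ge2$, this deviation is rare, and combined with ergodicity of $\{J(t)\}$ the integrand tends to $\varpi_j$ uniformly, so the upper region contributes $\varpi_j\overline{H}_{\rme}(u_k^+)\simhm{k}\varpi_j\overline{H}_{\rme}(u_k)$. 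Matching the fluctuation scale $u_k^{1/2}$ with the shift-tolerance $u_k^{1-1/\mu}$ (equal when $\mu=2$) is where the hypothesis $\mu\ge2$ is used.

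The hard part is the lower region: I must show $\int_0^{u_k^-}\PP(N(x)>k\mid J(0)=i)\,\rmd H_{\rme}(x)=o(\overline{H}_{\rme}(u_k))$, even though $H_{\rme}$ spreads mass down to small $x$ where $\{N(x)>k\}$ is a severe upward deviation of the arrival count. This is precisely what condition~(ii) controls. Because $\sum_k\rme^{Q(k)}\vc{D}(k)<\infty$ with $Q\in\SC$, the batch-size tail is at most $\rme^{-Q(\cdot)}$ and lies in a subexponential-type class, so I expect the one-big-jump estimate $\PP(N(x)>k\mid J(0)=i)\lesssim x\cdot\rme^{-Q(k-\lambda x)}$ to hold uniformly in the phase; changing variables to $y=k-\lambda x$ and retaining the $H_{\rme}$-density, the lower-region integral is bounded by $\overline{H}_{\rme}(u_k)$ times a factor of the form $\mathrm{poly}(u_k)\,\rme^{-Q(\lambda a_k)}$. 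The calibration $x^{1/\mu}=O(Q(x))$ forces $Q(\lambda a_k)\gtrsim a_k^{1/\mu}\sim u_k^{(1-1/\mu)/\mu}\to\infty$ polynomially, which dominates $\log u_k$ and drives this factor to zero; hence the lower region is $o(\overline{H}_{\rme}(u_k))$. Assembling the three regions gives (\ref{asymp-overline{A}_{e}(k)}). I expect this large-deviation bound, together with the simultaneous control of the fluctuation and shift scales noted above, to be the main obstacle of the whole theorem; everything else is mechanical. (An alternative keeping the analysis discrete is to work from the uniformized representation already used above, applying the subexponential convolution toolbox to $\overline{\vc{\varLambda}^{\ast n}}(k)$ mixed by a Poisson kernel over the heavy $H_{\rme}$.)

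Granting (\ref{asymp-overline{A}_{e}(k)}), the remaining two formulas are routine. For (\ref{asymp-ooverline{A}(k)}) I apply Proposition~\ref{Masu11-prop} to the convolution identity (\ref{eqn-ooverline{A}(k)e}) of Lemma~\ref{lem-P_e}, $\overline{\vc{P}}(k)\vc{e}=h\,\vc{P}_{\rme}\ast\overline{\vc{D}}(k)\vc{e}$, with reference tail $\PP(Y>k)=\overline{H}_{\rme}(k/\lambda)$, which is in $\calS$ since $H_{\rme}\in\calS$ by~(iii). The first factor has tail $\overline{\vc{P}}_{\rme}(k)\simhm{k}\vc{e}\vc{\varpi}\,\overline{H}_{\rme}(k/\lambda)$ by (\ref{asymp-overline{A}_{e}(k)}), while condition~(ii) makes $\ooverline{\vc{D}}(k)\vc{e}=o(\overline{H}_{\rme}(k/\lambda))$, so the second factor's tail vanishes at this scale; Proposition~\ref{Masu11-prop} then gives $\ooverline{\vc{P}}(k)\vc{e}/h\simhm{k}\vc{e}\vc{\varpi}\big(\sum_{l}l\vc{D}(l)\vc{e}\big)\overline{H}_{\rme}(k/\lambda)$. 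Since $\vc{\varpi}\sum_{l}l\vc{D}(l)\vc{e}=\lambda$ by (\ref{defn-lambda}) and (\ref{eqn-rho}), this is exactly $\ooverline{\vc{P}}(k)\vc{e}\simhm{k}\rho\vc{e}\,\overline{H}_{\rme}(k/\lambda)$. Finally, (\ref{asymp-overline{p}(k)-01}) follows by inserting $\vc{c}=\rho\vc{e}$ and $\PP(Y>k)=\overline{H}_{\rme}(k/\lambda)$ into Corollary~\ref{coro-BMAP}: as $\vc{\varpi}\vc{c}=\rho\vc{\varpi}\vc{e}=\rho$, the corollary yields $\overline{\vc{y}}(k)\simhm{k}\frac{\rho}{1-\rho}\vc{\varpi}\,\overline{H}_{\rme}(k/\lambda)$, whose $i$th component is the asserted formula.
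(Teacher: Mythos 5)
Your derivations of the second and third formulas coincide with the paper's: Proposition~\ref{Masu11-prop} applied to the convolution identity (\ref{eqn-ooverline{A}(k)e}) of Lemma~\ref{lem-P_e}, with $\ooverline{\vc{D}}(k)\vc{e}=o(\overline{H}_{\rme}(k/\lambda))$ forced by condition~(ii) (the paper spells this out in Remark~\ref{rem-thm-BMAP-GI-1-01}: condition~(ii) gives $\overline{\vc{D}}(k)=o(\exp\{-\delta k^{1/\mu}\})$ while condition~(i) gives $\overline{H}_{\rme}(x)=\exp\{-o(x^{1/\mu})\}$), followed by Corollary~\ref{coro-BMAP} with $\vc{c}=\rho\vc{e}$. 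No issues there.

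The divergence is in the first formula, and that is where your proposal has a gap. The paper does not re-derive the sampling asymptotics from scratch: it invokes Corollary~\ref{appendix-coro-01} (resting on Proposition~\ref{appendix-prop-01}, i.e.\ the cumulative-process result of Masuyama (2013)) to get $\PP(N(T)>k\mid J(0)=i)\simhm{k}\PP(T>k/\lambda)$ for $T\sim H_{\rme}$, and then upgrades to the joint statement with $J(T)=j$ following Lemmas~3.1--3.2 of Masuyama (2009). Your integral-splitting argument is essentially a sketch of the proof of that cited result, and its crux --- the lower-region bound $\PP(N(x)>k\mid J(0)=i)\lesssim x\,\rme^{-Q(k-\lambda x)}$ uniformly in the phase --- is asserted (``I expect the one-big-jump estimate \dots to hold'') rather than proved. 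Establishing it is genuinely nontrivial: one needs a Chernoff-type bound $\EE[\rme^{Q(N(x))}]<\infty$ with controlled growth in $x$, which uses the subadditivity of the concave hazard $Q$ together with condition~(ii) and the phase-type regenerative structure of the BMAP; this is precisely the content of Proposition~\ref{appendix-prop-01} and is not ``mechanical.'' A second, smaller soft spot is your upper-region/fluctuation matching at the boundary case $\mu=2$: with $a_k=\xi u_k^{1/2}$ the Chebyshev estimate for $\PP(N(x)\le k)$ on $x\ge u_k+a_k$ is only $O(1/\xi^2)$, not $o(1)$, so one must first obtain $\limsup$/$\liminf$ bounds with an error depending on $\xi$ and then let $\xi\to\infty$ (permissible because Proposition~\ref{rem-higher-order-L}(iii) holds for all $\xi$); your sketch claims the deviation is simply ``rare,'' which is not quite true at $\mu=2$. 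Since the paper itself supplies Corollary~\ref{appendix-coro-01} exactly for this purpose, the cleanest repair is to cite it rather than reprove it; as written, your proof of (\ref{asymp-overline{A}_{e}(k)}) is incomplete at its central estimate.
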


\begin{rem}\label{rem-thm-BMAP-GI-1-01}
Condition (i) implies that $\overline{H}_\rme(x) =
\exp\{-o(x^{1/\mu})\}$ (see
Proposition~\ref{rem-higher-order-L}~(ii)). Further condition (ii)
implies that $\overline{\vc{D}}(k) = o(\exp\{-\delta k^{1/\mu}\})$ for
some $\delta > 0$. Thus $\overline{\vc{D}}(k) =
o(\overline{H}_\rme(k))$.
\end{rem}

\noindent
{\it Proof of Theorem~\ref{thm-BMAP-GI-1-01}.}~ Let $T$ denote a
nonnegative random variable distributed with $H_{\rme}$ independently
of BMAP $\{\vc{C},\vc{D}(1),\vc{D}(2),\dots\}$. We can readily obtain
\begin{equation}
\PP(N(T) > k \mid J(0) = i) \simhm{k} \PP(T > k/\lambda ),
\qquad i \in \bbM,
\label{asymp-N(T)-01}
\end{equation}
by following the proof of Lemma~3.1 in \citet{Masu09} and using
Corollary~\ref{appendix-coro-01} instead of Lemma~2.1 in
\citet{Masu09}. Further similarly to the proof of Lemma~3.2 in
\citet{Masu09}, we can prove from (\ref{asymp-N(T)-01}) that
\[
\PP(N(T) > k, J(T) = j \mid J(0) = i) \simhm{k} \varpi_j \PP(T > k/\lambda ),
\qquad i,j \in \bbM,
\]
which shows that (\ref{asymp-overline{A}_{e}(k)}) holds.

Next we prove (\ref{asymp-ooverline{A}(k)}). According to
Remark~\ref{rem-thm-BMAP-GI-1-01}, $\overline{\vc{D}}(k) =
o(\exp\{-\delta k^{1/\mu}\})$ for some $\delta > 0$, which implies
that
\begin{eqnarray*}
\ooverline{\vc{D}}(k) 
&\le& o(\exp\{-(\delta/2) k^{1/\mu}\})
\sum_{l=k+1}^{\infty} \exp\{-(\delta/2) l^{1/\mu} \}
\nonumber
\\
&=& o(\exp\{-(\delta/2) k^{1/\mu}\}).
\end{eqnarray*}
Thus
since $\overline{H}_{\rme}(k/\lambda) = \exp\{-o(k^{1/\mu})\}$ (see
Remark~\ref{rem-thm-BMAP-GI-1-01}), we obtain
\begin{equation}
\ooverline{\vc{D}}(k) = o(\overline{H}_{\rme}(k/\lambda)).
\label{eqn-ooverline{D}(k)-01}
\end{equation}
Applying Proposition~\ref{Masu11-prop} to (\ref{eqn-ooverline{A}(k)e})
and using (\ref{asymp-overline{A}_{e}(k)}) and
(\ref{eqn-ooverline{D}(k)-01}) yield
\begin{eqnarray*}
\ooverline{\vc{P}}(k)\vc{e} 
&\simhm{k}& h
\vc{e} \vc{\varpi} \sum_{k=0}^{\infty}\overline{\vc{D}}(k)\vc{e} 
\cdot \overline{H}_{\rme}(k/\lambda)
= \rho \vc{e} \cdot \overline{H}_{\rme}(k/\lambda),
\end{eqnarray*}
where the last equality is due to (\ref{defn-lambda}) and $\rho =
\lambda h$. Therefore we have (\ref{asymp-ooverline{A}(k)}).

Finally, from (\ref{asymp-ooverline{A}(k)}) and
Corollary~\ref{coro-BMAP}, we have
\[
\overline{\vc{y}}(k)
\simhm{k} {\rho \over 1 - \rho} \vc{\varpi} \cdot \overline{H}_{\rme}(k/\lambda),
\]
which shows that (\ref{asymp-overline{p}(k)-01}) holds.
\qed

\medskip

We now compare Theorem~\ref{thm-BMAP-GI-1-01} with a similar result
presented in \citet{Masu09}, which is as follows:
\begin{prop}[\citealt{Masu09}, Theorem~3.1]\label{Masu09-prop-01}
If (i) $H \in \calL^2$ and $H_{\rme} \in \calS$; and (ii)
$\sum_{k=1}^{\infty}\rme^{\phi\sqrt{k}}\vc{D}(k) < \infty$ for some
$\phi > 0$, then (\ref{asymp-overline{p}(k)-01}) holds.
\end{prop}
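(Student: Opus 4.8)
The plan is to obtain Proposition~\ref{Masu09-prop-01} as a special case of the already-proved Theorem~\ref{thm-BMAP-GI-1-01}, taking $\mu = 2$. Since the conclusion (\ref{asymp-overline{p}(k)-01}) is common to both statements, it suffices to check that hypotheses (i) and (ii) of the proposition imply the three hypotheses of the theorem with $\mu = 2$, after which the formula follows immediately.

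Two of the three verifications are routine. Condition (iii) of Theorem~\ref{thm-BMAP-GI-1-01}, namely $H_{\rme} \in \calS$, is part of hypothesis (i) of the proposition. For condition (ii) of the theorem I would take $Q(x) = \phi\sqrt{x}$ with $\phi > 0$ the constant from the proposition's hypothesis (ii). This $Q$ lies in $\SC_{1/2} \subset \SC$: it is concave, satisfies $\log x = o(Q(x))$, and $Q(x)/x^{1/2} \equiv \phi$ is nonincreasing, so Definition~\ref{defn-SC'} holds with $\alpha = 1/2$; moreover $x^{1/\mu} = x^{1/2} = O(Q(x))$. Hence $\sum_{k=1}^{\infty}\rme^{\phi\sqrt{k}}\vc{D}(k) < \infty$ is precisely condition (ii) of the theorem for this $Q$ and $\mu = 2$.

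The crux is condition (i) of the theorem with $\mu=2$, i.e.\ $H_{\rme} \in \calL^2$, which must be extracted from the proposition's assumption $H \in \calL^2$; note that $H_{\rme} \in \calS \subset \calL^1$ does not suffice, since $\calL^2 \subsetneq \calL^1$. I would invoke the characterization in Proposition~\ref{rem-higher-order-L}(iii): $H \in \calL^2$ is equivalent to $\overline{H}(x - \xi\sqrt{x}) \simhm{x} \overline{H}(x)$ for some (hence all) $\xi \neq 0$, and the target is the same relation with $\overline{H}$ replaced by $\overline{H}_{\rme}$. Writing $\overline{H}_{\rme}(x) = h^{-1}\int_x^{\infty}\overline{H}(y)\,\rmd y$ and taking $\xi > 0$, long-tailedness gives
\[
\overline{H}_{\rme}(x - \xi\sqrt{x}) - \overline{H}_{\rme}(x)
= h^{-1}\int_{x-\xi\sqrt{x}}^{x}\overline{H}(y)\,\rmd y
\le h^{-1}\xi\sqrt{x}\,\overline{H}(x - \xi\sqrt{x})
\simhm{x} h^{-1}\xi\sqrt{x}\,\overline{H}(x),
\]
so it remains to establish the integrated-tail estimate $\sqrt{x}\,\overline{H}(x) = o\!\left(\int_x^{\infty}\overline{H}(y)\,\rmd y\right)$, equivalently $\sqrt{x}\,\overline{H}(x) = o(\overline{H}_{\rme}(x))$.

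This estimate is where I expect the main difficulty, since it is a genuinely quantitative statement about the tail of $H$ rather than a formal manipulation. Membership $H \in \calL^2$ controls $\overline{H}$ only on the $\sqrt{x}$ scale (by Proposition~\ref{rem-higher-order-L}(ii), $\overline{H}(x) = \exp\{-o(\sqrt{x})\}$), so turning this into the required $o(\cdot)$ bound calls for a careful argument --- for example summing $\overline{H}$ over successive blocks of length $\asymp \sqrt{x}$, or appealing to the preservation of $\calL^{\mu}$ under the equilibrium operation established in \citet{Masu13}. Once $H_{\rme} \in \calL^2$ is secured, all hypotheses of Theorem~\ref{thm-BMAP-GI-1-01} hold with $\mu = 2$, and (\ref{asymp-overline{p}(k)-01}) follows, completing the proof.
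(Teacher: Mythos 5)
Your reduction to Theorem~\ref{thm-BMAP-GI-1-01} with $\mu=2$ and $Q(x)=\phi\sqrt{x}$ is exactly how the paper treats this proposition: it is stated as a known result, and the surrounding discussion derives it from the theorem, with the one nontrivial implication $H\in\calL^2\Rightarrow H_{\rme}\in\calL^2$ handled by citing Lemma~A.2 of \citet{Masu09}. The step you flag as the main difficulty closes in one line by the block idea you mention: for any $c>0$, $\overline{H}_{\rme}(x)\ge h^{-1}c\sqrt{x}\,\overline{H}(x+c\sqrt{x})\simhm{x}h^{-1}c\sqrt{x}\,\overline{H}(x)$ by Proposition~\ref{rem-higher-order-L}(iii), whence $\sqrt{x}\,\overline{H}(x)=o(\overline{H}_{\rme}(x))$ and your bound on $\overline{H}_{\rme}(x-\xi\sqrt{x})-\overline{H}_{\rme}(x)$ finishes the argument.
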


Note that if $H \in \calL^2$, then $H_{\rme} \in \calL^2$ (see
Lemma~A.2 in \citealt{Masu09}). Note also that $H_{\rme} \in \calL^2$
if and only if $H_{\rme} \in \calL^{\mu}$ for some $\mu \ge 2$ (see
Proposition~\ref{rem-higher-order-L}~(i)). Thus conditions (i) and
(iii) of Theorem~\ref{thm-BMAP-GI-1-01} are weaker than condition (i)
of Proposition~\ref{Masu09-prop-01}. Further if $Q(x) = \phi\sqrt{x}$,
then condition (ii) of Theorem~\ref{thm-BMAP-GI-1-01} is reduced to
condition (ii) of Proposition~\ref{Masu09-prop-01}. As a result,
Theorem~\ref{thm-BMAP-GI-1-01} is a more general result than
Proposition~\ref{Masu09-prop-01}.

Actually, \citet{AsmuKlupSigm99} consider an M/GI/1 queue with arrival
rate $\lambda$ and service time distribution $H$, and the authors
prove that if $H_{\rme} \in \calL^2 \cap \calS$,
\[
\PP(L > k)
\simhm{k} {\rho \over 1 - \rho} \overline{H}_{\rme}(k/\lambda).
\]
Theorem~\ref{thm-BMAP-GI-1-01} includes this result as a special case
whereas Proposition~\ref{Masu09-prop-01} does not.

\subsubsection{Consistently varying service time}

\begin{thm}\label{thm-BMAP-GI-1-02}
Suppose that (i) $H_{\rme} \in \calC$ and $\int_0^{\infty}
\overline{H}_{\rme}(x)\rmd x < \infty$ and (ii) $\overline{\vc{D}}(k)
= o(\overline{H}_{\rme}(k))$.  We then have
(\ref{asymp-overline{A}_{e}(k)}). Further if (iii) there exists some
finite $\widetilde{\vc{d}}_H \ge \vc{0}$ such that
$\ooverline{\vc{D}}(k)\vc{e} \simhm{k} \overline{H}_{\rme}(k/\lambda)
\widetilde{\vc{d}}_H$, then
\begin{equation}
\ooverline{\vc{P}}(k)\vc{e} 
\simhm{k} 
\left(
\rho \vc{e} + h\widehat{\vc{P}}_{\rme}(1) \widetilde{\vc{d}}_H
\right) 
\overline{H}_{\rme}(k/\lambda),
\label{asymp-ooverline{A}(k)-02}
\end{equation}
and
\begin{equation}
\PP(L > k, J = i)
\simhm{k} 
{\rho + h\vc{\varpi} \widetilde{\vc{d}}_H \over 1 - \rho}
\varpi_i
\cdot \overline{H}_{\rme}(k/\lambda).
\label{asymp-overline{p}(k)-02}
\end{equation}
\end{thm}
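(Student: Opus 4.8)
The plan is to follow the blueprint of the proof of Theorem~\ref{thm-BMAP-GI-1-01}, replacing the higher-order long-tailed input by its consistent-variation counterpart. The starting observation is that, by (\ref{defn-hat{A}_e(z)}), the matrix $\vc{P}_\rme(k)$ admits the probabilistic reading $[\vc{P}_\rme(k)]_{i,j} = \PP(N(T) = k, J(T) = j \mid J(0) = i)$, where $T$ is distributed according to $H_\rme$ and is independent of the BMAP; consequently $[\overline{\vc{P}}_\rme(k)]_{i,j} = \PP(N(T) > k, J(T) = j \mid J(0) = i)$. Thus establishing (\ref{asymp-overline{A}_{e}(k)}) amounts to a random-time-change asymptotic for the BMAP counting process run up to the heavy-tailed horizon $T$, where the condition $\int_0^\infty \overline{H}_\rme(x)\rmd x < \infty$ in~(i) guarantees $\EE[T] < \infty$.

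First I would prove the scalar estimate $\PP(N(T) > k \mid J(0) = i) \simhm{k} \PP(T > k/\lambda)$ for every $i \in \bbM$. Mirroring the proof of Lemma~3.1 in \citet{Masu09}, I would split $\PP(N(T) > k \mid J(0) = i) = \int_0^\infty \PP(N(t) > k \mid J(0) = i)\,\rmd H_\rme(t)$ at $t = (1\pm\varepsilon)k/\lambda$. On the range $t > (1+\varepsilon)k/\lambda$ the conditional probability tends to one and the mass is $\overline{H}_\rme((1+\varepsilon)k/\lambda)$; on the range $t < (1-\varepsilon)k/\lambda$ the event $\{N(t) > k\}$ is a large deviation whose contribution is controlled by condition~(ii), $\overline{\vc{D}}(k) = o(\overline{H}_\rme(k))$, which rules out a single large batch carrying the count across $k$. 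The consistent variation of $H_\rme$ (condition~(i)) then lets $\varepsilon \downarrow 0$ via $\overline{H}_\rme((1\pm\varepsilon)k/\lambda)/\overline{H}_\rme(k/\lambda) \to 1$, which is exactly the defining property of $\calC$ in Definition~\ref{defn-class-C}; this is the step where the $\calL^\mu$-specific estimate (Corollary~\ref{appendix-coro-01}) invoked in Theorem~\ref{thm-BMAP-GI-1-01} must be swapped for its $\calC$-version. Promoting this to the phase-resolved limit $\PP(N(T) > k, J(T) = j \mid J(0) = i) \simhm{k} \varpi_j \PP(T > k/\lambda)$ follows the proof of Lemma~3.2 in \citet{Masu09}, using the ergodicity of $\{J(t)\}$ to decouple the terminal phase from the large count, and yields (\ref{asymp-overline{A}_{e}(k)}). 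I expect this random-time-change argument under mere consistent variation to be the main obstacle.

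Given (\ref{asymp-overline{A}_{e}(k)}), the tail of $\ooverline{\vc{P}}(k)\vc{e}$ is routine. Writing Lemma~\ref{lem-P_e} as $\overline{\vc{P}}(k)\vc{e} = h\,\vc{P}_\rme \ast \overline{\vc{D}}(k)\vc{e}$ and summing over levels exceeding $k$, I would apply Proposition~\ref{Masu11-prop} with $\vc{M}(k) = \vc{P}_\rme(k)$, $\vc{N}(k) = \overline{\vc{D}}(k)\vc{e}$, and reference tail $\PP(Y > k) = \overline{H}_\rme(k/\lambda)$, where $Y \in \calS$ because $\calC \subset \calS$ (Remark~\ref{rem-class-C}) and $\calC$ is invariant under the scaling $k \mapsto k/\lambda$. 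Here $\overline{\vc{M}}(k) = \overline{\vc{P}}_\rme(k)$ has limit $\vc{e}\vc{\varpi}$ by (\ref{asymp-overline{A}_{e}(k)}), $\overline{\vc{N}}(k) = \ooverline{\vc{D}}(k)\vc{e}$ has limit $\widetilde{\vc{d}}_H$ by condition~(iii), $\vc{M} = \widehat{\vc{P}}_\rme(1)$, and $\vc{N} = \sum_{k=0}^\infty \overline{\vc{D}}(k)\vc{e} = \sum_{l=1}^\infty l\vc{D}(l)\vc{e}$ (finite since $\vc{\varpi}\vc{N} = \lambda < \infty$ and $\vc{\varpi} > \vc{0}$). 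Proposition~\ref{Masu11-prop} then gives $\ooverline{\vc{P}}(k)\vc{e} \simhm{k} h\bigl(\vc{e}\vc{\varpi}\vc{N} + \widehat{\vc{P}}_\rme(1)\widetilde{\vc{d}}_H\bigr)\overline{H}_\rme(k/\lambda)$, and since $\vc{\varpi}\vc{N} = \lambda$ (cf.\ (\ref{defn-lambda})) the first term collapses to $h\lambda\vc{e} = \rho\vc{e}$, which is precisely (\ref{asymp-ooverline{A}(k)-02}).

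Finally, I would feed (\ref{asymp-ooverline{A}(k)-02}) into Corollary~\ref{coro-BMAP} with $\vc{c} = \rho\vc{e} + h\widehat{\vc{P}}_\rme(1)\widetilde{\vc{d}}_H$ and the same $Y \in \calS$, obtaining $\overline{\vc{y}}(k) \simhm{k} (1-\rho)^{-1}(\vc{\varpi}\vc{c})\,\vc{\varpi}\,\overline{H}_\rme(k/\lambda)$. Evaluating $\vc{\varpi}\vc{c} = \rho\,\vc{\varpi}\vc{e} + h\,\vc{\varpi}\widehat{\vc{P}}_\rme(1)\widetilde{\vc{d}}_H = \rho + h\vc{\varpi}\widetilde{\vc{d}}_H$ via $\vc{\varpi}\vc{e} = 1$ and $\vc{\varpi}\widehat{\vc{P}}_\rme(1) = \vc{\varpi}$ (equation~(\ref{add-eqn-03a})) turns this into (\ref{asymp-overline{p}(k)-02}), completing the proof.
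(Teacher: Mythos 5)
Your proposal is correct and follows essentially the same route as the paper: the random-time-change asymptotic for $\overline{\vc{P}}_{\rme}(k)$, then Proposition~\ref{Masu11-prop} applied to the convolution identity of Lemma~\ref{lem-P_e}, then Corollary~\ref{coro-BMAP}. The only difference is that the ``$\calC$-version'' sampling estimate you propose to prove by hand (the $\varepsilon$-splitting argument) is exactly Proposition~\ref{appendix-prop-02}, which the paper simply cites after checking that conditions (i) and (ii) match its hypotheses.
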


\proof As in the proof of Theorem~\ref{thm-BMAP-GI-1-01}, let $T$
denote a nonnegative random variable distributed with $H_{\rme}$
independently of BMAP $\{\vc{C},\vc{D}(1),\vc{D}(2),\dots\}$. It is
easy to see that the conditions of Proposition~\ref{appendix-prop-02}
are satisfied. Using Proposition~\ref{appendix-prop-02}, we can obtain
(\ref{asymp-N(T)-01}) and thus (\ref{asymp-overline{A}_{e}(k)}) in the
same way as the proof of Theorem~\ref{thm-BMAP-GI-1-01}, where we do
not require condition (iii).

In addition, applying
Proposition~\ref{Masu11-prop} to (\ref{eqn-ooverline{A}(k)e}) and
using (\ref{asymp-overline{A}_{e}(k)}) and condition~(iii), we obtain
\begin{eqnarray*}
\ooverline{\vc{P}}(k)\vc{e} 
&\simhm{k}& 
h \left(
\vc{e}\vc{\varpi} \sum_{k=0}^{\infty}\overline{\vc{D}}(k)\vc{e}
+ \widehat{\vc{P}}_{\rme}(1) \widetilde{\vc{d}}_H
\right) 
\overline{H}_{\rme}(k/\lambda)
\nonumber
\\
&=& \left(
\rho \vc{e} + h\widehat{\vc{P}}_{\rme}(1) \widetilde{\vc{d}}_H
\right) 
\overline{H}_{\rme}(k/\lambda),
\end{eqnarray*}
where the last equality follows from (\ref{defn-lambda}) and $\rho =
\lambda h$. Therefore we have
(\ref{asymp-ooverline{A}(k)-02}). Combining
(\ref{asymp-ooverline{A}(k)-02}), (\ref{add-eqn-03a}) and
Corollary~\ref{coro-BMAP} yields
\[
\overline{\vc{y}}(k) 
\simhm{k} 
{\rho + h\vc{\varpi} \widetilde{\vc{d}}_H \over 1 - \rho}
\vc{\varpi}
\cdot \overline{H}_{\rme}(k/\lambda),
\]
which leads to (\ref{asymp-overline{p}(k)-02}).
\qed

\medskip

Suppose $\widetilde{\vc{d}}_H = \vc{0}$. It then follows that
asymptotic formula (\ref{asymp-overline{p}(k)-02}) in
Theorem~\ref{thm-BMAP-GI-1-02} has the same expression as
(\ref{asymp-overline{p}(k)-01}) in Theorem~\ref{thm-BMAP-GI-1-01}.
The two theorems assume that $\overline{\vc{D}}(k) =
o(\overline{H}_{\rme}(k))$ (see Remark~\ref{rem-thm-BMAP-GI-1-01} and
condition~(ii) of Theorem~\ref{thm-BMAP-GI-1-02}) and thus that the
service time distribution has a dominant impact on the tail of the
stationary queue length distribution.

Conversely, the following theorem assumes, as with
Theorem~\ref{rem-thm-BMAP-GI-1-01}, that the batch size distribution
has a dominant impact on the tail of the stationary queue length
distribution.
\begin{thm}\label{thm-BMAP-GI-1-03}
Suppose that conditions (i) and (ii) of Theorem~\ref{thm-BMAP-GI-1-02}
are satisfied. Further suppose that Assumption~\ref{assumpt-G} holds
for $G_{\rm de} \in \calS$ such that $\overline{H}_{\rme}(k/\lambda) =
o(\PP(G_{\rm de} > k))$.  We then have (\ref{asymp-ooverline{A}(k)-03})
and thus (\ref{asymp-overline{p}(k)-00}).
\end{thm}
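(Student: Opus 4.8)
The plan is to follow the proof of Theorem~\ref{thm-BMAP-GI-1-00} almost verbatim, the only genuinely new point being the justification of the estimate $\overline{\vc{P}}_{\rme}(k) = o(\PP(G_{\rm de} > k))$ under the present hypotheses. In Theorem~\ref{thm-BMAP-GI-1-00} this estimate came from the light-tailedness of $H$; here $H$ need not be light-tailed, so I would instead extract it from Theorem~\ref{thm-BMAP-GI-1-02}. Concretely, conditions (i) and (ii) of Theorem~\ref{thm-BMAP-GI-1-02} are assumed, and the first part of its proof (which does not invoke condition (iii)) already yields (\ref{asymp-overline{A}_{e}(k)}), i.e.\ $\overline{\vc{P}}_{\rme}(k) \simhm{k} \vc{e}\vc{\varpi}\cdot\overline{H}_{\rme}(k/\lambda)$. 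Combining this with the standing assumption $\overline{H}_{\rme}(k/\lambda) = o(\PP(G_{\rm de} > k))$ gives $\overline{\vc{P}}_{\rme}(k) = o(\PP(G_{\rm de} > k))$ at once.

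Second, I would apply Proposition~\ref{Masu11-prop} to the convolution identity (\ref{eqn-ooverline{A}(k)e}) of Lemma~\ref{lem-P_e}, with $\vc{M}(k) = \vc{P}_{\rme}(k)$, $\vc{N}(k) = \overline{\vc{D}}(k)\vc{e}$, and subexponential reference variable $U = G_{\rm de} \in \calS$. The finiteness hypotheses of Proposition~\ref{Masu11-prop} hold because $\vc{M} = \widehat{\vc{P}}_{\rme}(1)$ is stochastic and $\vc{N} = \sum_{k}\overline{\vc{D}}(k)\vc{e} = \sum_{l}l\vc{D}(l)\vc{e}$ is finite, since $\vc{\varpi}\vc{N} = \lambda < \infty$ by (\ref{eqn-rho}) and $\vc{\varpi} > \vc{0}$. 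For the tail inputs, the previous step gives $\overline{\vc{M}}(k) = \overline{\vc{P}}_{\rme}(k) = o(\PP(G_{\rm de} > k))$, so $\widetilde{\vc{M}} = \vc{O}$, while Assumption~\ref{assumpt-G}, i.e.\ (\ref{lim-ooverline{D}(k)e}), gives $\overline{\vc{N}}(k) = \ooverline{\vc{D}}(k)\vc{e} \simhm{k} \widetilde{\vc{d}}_G\,\PP(G_{\rm de} > k)$, so $\widetilde{\vc{N}} = \widetilde{\vc{d}}_G$. Since $h\,\overline{\vc{M}\ast\vc{N}}(k) = \ooverline{\vc{P}}(k)\vc{e}$, Proposition~\ref{Masu11-prop} returns the limit $\widetilde{\vc{M}}\vc{N} + \vc{M}\widetilde{\vc{N}} = \widehat{\vc{P}}_{\rme}(1)\widetilde{\vc{d}}_G$, which is precisely (\ref{asymp-ooverline{A}(k)-03}).

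Finally, (\ref{asymp-overline{p}(k)-00}) would follow exactly as in Theorem~\ref{thm-BMAP-GI-1-00}. I would invoke Corollary~\ref{coro-BMAP} with $Y = G_{\rm de}$ and $\vc{c} = h\widehat{\vc{P}}_{\rme}(1)\widetilde{\vc{d}}_G$; its hypothesis is met because $\vc{c} \ge \vc{0}$ and $\vc{\varpi}\vc{c} = h\vc{\varpi}\widetilde{\vc{d}}_G = \rho > 0$ forces $\vc{c} \neq \vc{0}$. Simplifying the resulting constant using (\ref{add-eqn-03a}), namely $\vc{\varpi}\widehat{\vc{P}}_{\rme}(1) = \vc{\varpi}$, and (\ref{add-eqn-03b}), namely $\vc{\varpi}\widetilde{\vc{d}}_G = \lambda$, gives $\vc{\varpi}\vc{c} = h\lambda = \rho$ and hence $\overline{\vc{y}}(k) \simhm{k} \tfrac{\rho}{1-\rho}\vc{\varpi}\cdot\PP(G_{\rm de} > k)$, which is (\ref{asymp-overline{p}(k)-00}).

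I expect the only delicate point to be the first step, namely recognizing that the roles of the service and batch-size tails are reversed relative to Theorem~\ref{thm-BMAP-GI-1-00}. There the batch size dominates and the light-tailed service forces $\overline{\vc{P}}_{\rme}(k) = o(\PP(G_{\rm de}>k))$; here the service time is heavy (consistently varying), yet the batch-size tail is heavier still, which is exactly the content of the hypothesis $\overline{H}_{\rme}(k/\lambda) = o(\PP(G_{\rm de}>k))$. It is this hypothesis, together with (\ref{asymp-overline{A}_{e}(k)}) borrowed from Theorem~\ref{thm-BMAP-GI-1-02}, that reinstates the same $o(\cdot)$ estimate and lets the convolution argument of Theorem~\ref{thm-BMAP-GI-1-00} go through unchanged. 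Everything after that is routine bookkeeping.
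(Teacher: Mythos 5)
Your proposal is correct and follows essentially the same route as the paper: the paper's proof likewise extracts (\ref{asymp-overline{A}_{e}(k)}) from conditions (i)--(ii) of Theorem~\ref{thm-BMAP-GI-1-02}, combines it with $\overline{H}_{\rme}(k/\lambda) = o(\PP(G_{\rm de} > k))$ to get $\overline{\vc{P}}_{\rme}(k) = o(\PP(G_{\rm de} > k))$, and then repeats the convolution argument of Theorem~\ref{thm-BMAP-GI-1-00}. Your write-up merely spells out the bookkeeping (the application of Proposition~\ref{Masu11-prop} and Corollary~\ref{coro-BMAP}) that the paper leaves implicit by reference.
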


\proof As shown in the proof of Theorem~\ref{thm-BMAP-GI-1-02}, the
asymptotics (\ref{asymp-overline{A}_{e}(k)}) holds under conditions
(i) and (ii) of Theorem~\ref{thm-BMAP-GI-1-02}. From
(\ref{asymp-overline{A}_{e}(k)}) and $\overline{H}_{\rme}(k/\lambda) =
o(\PP(G_{\rm de} > k))$, we have (\ref{add-eqn-02}), i.e.,
$\overline{\vc{P}}_{\rme}(k) = o(\PP(G_{\rm de} > k))$.  The rest of the
proof is the same as that of Theorem~\ref{thm-BMAP-GI-1-00}.  \qed

\section{Application to MAP/GI${}^{(a,b)}$/1 Queue}\label{sec-app-02}

In this section, we apply out main result to a single-server queue
with Markovian arrivals and the $(a,b)$-bulk-service rule, which is
denoted by MAP/GI${}^{(a,b)}$/1 queue (\citealt{Sing13}).

\subsection{Model description}

We assume that the arrival process is a Markovian arrival process
(MAP), which is a special case of the BMAP $\{\vc{C},\vc{D}(1),
\vc{D}(2),\dots\}$ (introduced in Section~\ref{sec-app-01}) such that
$\vc{D}(k) = \vc{O}$ for all $k\ge 2$. For convenience, we use the
symbols defined for the BMAP in Section~\ref{sec-app-01}, though we
denote, for simplicity, $\vc{D}(1)$ by $\vc{D}$. Thus the MAP is
characterized by $\{\vc{C},\vc{D}\}$. As with
Section~\ref{sec-app-01}, we assume that $\vc{C} + \vc{D}$ is
irreducible and that the arrival rate $\lambda$ is strictly positive,
i.e.,
\begin{equation}
\lambda = \vc{\varpi}\vc{D}\vc{e} > 0,
\label{defn-lambda-MAP}
\end{equation}
where $\vc{\varpi}$ is the unique stationary probability vector of
$\vc{C} + \vc{D}$.

We also assume that the server works according to the
$(a,b)$-bulk-service rule (\citealt{Sing13}). To explain the
$(a,b)$-bulk-service rule, we suppose that $l$ customers are waiting
in the queue at the completion of a service. The $(a,b)$-bulk-service
rule is as follows:
\begin{enumerate}
\item If $0 \le l < a$, the server keeps idle until the queue length
  is equal to the lower threshold $a$ and then starts serving all the
  $a$ customers when the queue length reaches $a$; and
\item If $l \ge a$, the server immediately starts serving $\min(l,b)$
  customers in the queue and makes the other $l-b$ customers (if any)
  be in the queue.
\end{enumerate}

The service times are assumed to be independent of the number of
customers in service and i.i.d.\ according to distribution function
$H$ with mean $h \in (0,\infty)$ and $H(0) = 0$. We assume that the
offered load $\rho = \lambda h$ satisfies
\begin{equation}
\rho < b,
\label{stability-cond-bulk}
\end{equation}
under which the system is stable (\citealt{Loyn62}).

It should be noted that since $\vc{D}(k) = \vc{O}$ for all $k\ge 2$, (\ref{defn-hat{P}(z)}) and (\ref{defn-hat{A}_e(z)}) are reduced to
\begin{eqnarray}
\widehat{\vc{P}}(z)
&=& \int_0^{\infty} \exp\{(\vc{C} + z\vc{D})x\} \rmd H(x),
\label{defn-hat{P}(z)-MAP}
\\
\widehat{\vc{P}}_{\rme}(z)
&=& \int_0^{\infty} \exp\{(\vc{C} + z\vc{D})x\} \rmd H_{\rme}(x).
\label{defn-hat{A}_e(z)-MAP}
\end{eqnarray}
In addition, since $\overline{\vc{D}}(0)=\vc{D}$ and $\overline{\vc{D}}(k)=\vc{O}$ for all $k \in \bbN$, it follows from Lemma~\ref{lem-P_e} that
\begin{equation}
\overline{\vc{P}}(k)\vc{e} 
= h\cdot\vc{P}_{\rme}(k)\vc{D}\vc{e},
\qquad k \in \bbZ_+.
\label{eqn-ooverline{A}(k)e-MAP}
\end{equation}

\subsection{Queue length process}

Let $L^{(a,b)}(t)$ ($t \ge 0$) denote the total number of customers
in the system at time $t$. Let $J(t)$ ($t \ge 0$) denote the state of the background Markov chain at time $t$. Let $0 = t_0 \le t_1 \le t_2 \le \cdots$ denote time points at each of which a service is completed.

Let $L_n^{(a,b)}$ and $J_n$ ($n \in \bbZ_+$) denote
\[
L_n^{(a,b)} = \lim_{\varepsilon \downarrow 0}L^{(a,b)}(t_n+\varepsilon),
\quad
J_n = \lim_{\varepsilon \downarrow 0}J(t_n+\varepsilon).
\]
Thus $L_n^{(a,b)}$ and $J_n$ denote the number of customers
in the queue and the state of the background Markov chain, respectively, immediately after the completion of the $n$th service. It follows (\citealt{Sing13}) that $\{(L_n^{(a,b)}, J_n);n\in\bbN_+\}$ is a discrete-time Markov chain with state space $\bbZ_+ \times \bbM$, whose transition probability matrix $\vc{T}_+^{(a,b)}$ is given by
\begin{equation}
\vc{T}_+^{(a,b)}
=
\left(
\begin{array}{@{\,\,}c@{\,\,}c@{\,\,}c@{\,\,}c@{\,\,}c@{\,\,}c@{\,\,}c@{\,\,}c@{\,\,}}
  \vc{P}_0(0) 
& \vc{P}_0(1) 
& \vc{P}_0(2) 
& \cdots
& \vc{P}_0(a) 
& \cdots
& \vc{P}_0(b) 
& \cdots
\\
  \vc{P}_1(0) 
& \vc{P}_1(1) 
& \vc{P}_1(2) 
& \cdots
& \vc{P}_1(a) 
& \cdots
& \vc{P}_1(b) 
& \cdots
\\
  \vdots
& \vdots
& \vdots
& \ddots
& \vdots
& \ddots
& \vdots
& \ddots
\\
  \vc{P}_{a-1}(0) 
& \vc{P}_{a-1}(1) 
& \vc{P}_{a-1}(2) 
& \cdots
& \vc{P}_{a-1}(a) 
& \cdots
& \vc{P}_{a-1}(b) 
& \cdots
\\
  \vc{P}(0) 
& \vc{P}(1) 
& \vc{P}(2) 
& \cdots
& \vc{P}(a) 
& \cdots
& \vc{P}(b) 
& \cdots
\\
  \vc{P}(0) 
& \vc{P}(1) 
& \vc{P}(2) 
& \cdots
& \vc{P}(a) 
& \cdots
& \vc{P}(b) 
& \cdots
\\
  \vdots
& \vdots
& \vdots
& \ddots
& \vdots
& \ddots
& \vdots
& \ddots
\\
  \vc{P}(0) 
& \vc{P}(1) 
& \vc{P}(2) 
& \cdots
& \vc{P}(a) 
& \cdots
& \vc{P}(b) 
& \cdots
\\
  \vc{O}	
&  \vc{P}(0) 
& \vc{P}(1) 
& \cdots
& \vc{P}(a-1) 
& \cdots
& \vc{P}(b-1) 
& \cdots
\\
   \vc{O}
&  \vc{O}	
&  \vc{P}(0) 
& \cdots
& \vc{P}(a-2) 
& \cdots
& \vc{P}(b-2) 
& \cdots
\\
  \vdots
& \vdots
& \vdots
& \ddots
& \vdots
& \ddots
& \vdots
& \ddots
\end{array}
\right),
\label{defn-T_bulk}
\end{equation}
where
\begin{equation}
\vc{P}_l(k) = \left[ (-\vc{C})^{-1}\vc{D} \right]^{a-l}\vc{P}(k),
\qquad l=0,1,\dots,a-1,~k\in\bbZ_+.
\label{defn-P_l(k)}
\end{equation}
Under the stability condition (\ref{stability-cond-bulk}), the Markov
chain $\{(L_n^{(a,b)}, J_n);n\in\bbN_+\}$ and thus $\vc{T}_+^{(a,b)}$
have the unique stationary distribution.  Let $\vc{y}(k)$ denote a $1
\times M$ vector such that
\[
[\vc{y}_+^{(a,b)}(k)]_i = \lim_{n\to\infty} 
\PP(L_n^{(a,b)} = k, J_n=i),
\qquad (k,i) \in \bbZ_+ \times \bbM.
\]

It should be noted that the stochastic process
$\{(L^{(a,b)}(t),J(t));t\ge 0\}$ is a semi-regenerative process with
the embedded Markov renewal process $\{(L_n^{(a,b)}, J_n,
t_n);n\in\bbZ_+\}$ (\citealt[Chapter 10]{Cinl75}). Note also that
$\{(L_n^{(a,b)}, J_n, t_n);n\in\bbZ_+\}$ is aperiodic because the
arrival process is Markovian (\citealt[Chapter 10, Definition
  2.22]{Cinl75}). Further the mean regenerative cycle (mean
inter-departure time) is given by
\begin{eqnarray}
\eta
:=
\lefteqn{
\sum_{k=0}^{\infty}
\sum_{i \in \bbM}[\vc{y}_+^{(a,b)}(k)]_{i} \cdot 
\EE[t_1 \mid L_0^{(a,b)}=k, J_0 = i]
}
\quad &&
\nonumber
\\
&=& h + \sum_{k=0}^{a-1}
\vc{y}_+^{(a,b)}(k) \cdot (-1)
\lim_{s\downarrow0}{\rmd \over \rmd s}
\left[ \int_0^{\infty} \rme^{-sx} \exp\{\vc{C} x\} \rmd x \vc{D}\right]^{a-k}
\vc{e}
\nonumber
\\
&=& h - \sum_{k=0}^{a-1}
\vc{y}_+^{(a,b)}(k) 
\lim_{s\downarrow0}{\rmd \over \rmd s}
\left[ (s\vc{I} - \vc{C} )^{-1}\vc{D}\right]^{a-k}
\vc{e}
\nonumber
\\
&=& h + \sum_{k=0}^{a-1}
\vc{y}_+^{(a,b)}(k) 
\sum_{l=0}^{a-k-1}
\left[ (- \vc{C} )^{-1}\vc{D}\right]^l (- \vc{C} )^{-2}\vc{D}
\vc{e}
\nonumber
\\
&=& h + \sum_{k=0}^{a-1}
\vc{y}_+^{(a,b)}(k) 
\sum_{l=0}^{a-k-1}
\left[ (- \vc{C} )^{-1}\vc{D}\right]^l (- \vc{C} )^{-1}
\vc{e}.
\label{defn-eta}
\end{eqnarray}
According to Theorem 6.12 in Chapter 10 of \citet{Cinl75}, we have for
$(k,j) \in \bbZ_+ \times \bbM$,
\begin{eqnarray}
\lefteqn{
[\vc{y}^{(a,b)}(k)]_j
}
\quad &&
\nonumber
\\
&:=& \lim_{t\to\infty} \PP(L^{(a,b)}(t) = k, J(t) = j)
\nonumber
\\
&=& {1 \over \eta}
\sum_{k=0}^{\infty}
\sum_{i \in \bbM}[\vc{y}_+^{(a,b)}(l)]_{i} \cdot 
\int_0^{\infty}\PP_{(l,i)}( L^{(a,b)}(x) = k, J(x) = j, t_1 > x) \rmd x,
\qquad
\label{defn-y^{(a,b)}(k)}
\end{eqnarray}
where $\PP_{(l,i)}(\,\cdot \,) = \PP(\,\cdot \mid L^{(a,b)}(0) = l,
J(0) = i)$.

We now define $\vc{P}(t,k)$ ($t \ge 0$, $k \in \bbZ_+$) as an $M
\times M$ matrix such that
\[
[\vc{P}(t,k)]_{i,j} = \PP(N(t) = k, J(t) = j \mid J(0) = i),
\qquad i,j \in \bbM.
\]
It then follows from (\ref{defn-y^{(a,b)}(k)}) that
\begin{eqnarray}
\vc{y}^{(a,b)}(k)
&=& {1 \over \eta}
\sum_{l=0}^k \vc{y}_+^{(a,b)}(l) \left[ (-\vc{C})^{-1}\vc{D} \right]^{k-l}
(-\vc{C})^{-1}, \qquad  0  \le k \le a-1, \quad
\label{eqn-y^{(a,b)}(k)-01}
\\
\vc{y}^{(a,b)}(a)
&=& {1 \over \eta}
\sum_{l=0}^a \vc{y}_+^{(a,b)}(l) \left[ (-\vc{C})^{-1}\vc{D} \right]^{a-l}
\int_0^{\infty} \vc{P}(x,0) \overline{H}(x) \rmd x,
\label{eqn-y^{(a,b)}(k)-02}
\\
\vc{y}^{(a,b)}(k)
&=& {1 \over \eta}
\sum_{l=0}^a \vc{y}_+^{(a,b)}(l) \left[ (-\vc{C})^{-1}\vc{D} \right]^{a-l}
\int_0^{\infty} \vc{P}(x,k-a) \overline{H}(x) \rmd x 
\nonumber
\\
&& \quad {} + {1 \over \eta}
\sum_{l=a+1}^k \vc{y}_+^{(a,b)}(l)
\int_0^{\infty} \vc{P}(x,k-l) \overline{H}(x) \rmd x, \quad k \ge a+1.
\qquad~~
\label{eqn-y^{(a,b)}(k)-03}
\end{eqnarray}
Note here that $H_{\rme}'(x) = h^{-1} \overline{H}(x)$ for $x \ge
0$. Note also that
\[
\int_0^{\infty} \vc{P}(x,k) H_{\rme}'(x) \rmd x
= \int_0^{\infty} \vc{P}(x,k) \rmd H_{\rme}(x)
= \vc{P}_{\rme}(k),
\]
where the last equality is due to (\ref{defn-hat{A}_e(z)}).  Thus
(\ref{eqn-y^{(a,b)}(k)-02}) and (\ref{eqn-y^{(a,b)}(k)-03}) can be
rewritten as
\begin{align}
\vc{y}^{(a,b)}(a)
&= {h \over \eta}
\sum_{l=0}^a \vc{y}_+^{(a,b)}(l) \left[ (-\vc{C})^{-1}\vc{D} \right]^{a-l}
\vc{P}_{\rme}(0),
&  &
\label{eqn-y^{(a,b)}(k)-02'}
\\
\vc{y}^{(a,b)}(k)
&= {h \over \eta}
\sum_{l=0}^a \vc{y}_+^{(a,b)}(l) \left[ (-\vc{C})^{-1}\vc{D} \right]^{a-l}
\vc{P}_{\rme}(k-a) &&
\nonumber
\\
& \quad {} + {h \over \eta}
\sum_{l=a+1}^k \vc{y}_+^{(a,b)}(l)\vc{P}_{\rme}(k-l), & k &\ge a+1.
\label{eqn-y^{(a,b)}(k)-03'}
\end{align}

\subsection{Asymptotic formulas for the queue length}

Let $\vc{S}(0)$ denote a $bM \times bM$ matrix such that
\begin{eqnarray}
\vc{S}(0) &=& 
\left(
\begin{array}{@{\,\,}c@{\,\,}c@{\,\,}c@{\,\,}c@{\,\,}c@{\,\,}c@{\,\,}c@{\,\,}c@{\,\,}}
  \vc{P}_0(0) 
& \vc{P}_0(1) 
& \vc{P}_0(2) 
& \cdots
& \vc{P}_0(a) 
& \cdots
& \vc{P}_0(b-1) 
\\
  \vc{P}_1(0) 
& \vc{P}_1(1) 
& \vc{P}_1(2) 
& \cdots
& \vc{P}_1(a) 
& \cdots
& \vc{P}_1(b-1) 
\\
  \vdots
& \vdots
& \vdots
& \ddots
& \vdots
& \ddots
& \vdots
\\
  \vc{P}_{a-1}(0) 
& \vc{P}_{a-1}(1) 
& \vc{P}_{a-1}(2) 
& \cdots
& \vc{P}_{a-1}(a) 
& \cdots
& \vc{P}_{a-1}(b-1) 
\\
  \vc{P}(0) 
& \vc{P}(1) 
& \vc{P}(2) 
& \cdots
& \vc{P}(a) 
& \cdots
& \vc{P}(b-1) 
\\
  \vc{P}(0) 
& \vc{P}(1) 
& \vc{P}(2) 
& \cdots
& \vc{P}(a) 
& \cdots
& \vc{P}(b-1) 
\\
  \vdots
& \vdots
& \vdots
& \ddots
& \vdots
& \ddots
& \vdots
\\
  \vc{P}(0) 
& \vc{P}(1) 
& \vc{P}(2) 
& \cdots
& \vc{P}(a) 
& \cdots
& \vc{P}(b-1) 
\end{array}
\right), \qquad~~
\label{defn-S(0)}
\end{eqnarray}
and let $\vc{S}(k)$ ($k \in \bbN$) denote a $bM \times M$ matrix such
that
\begin{eqnarray}
\vc{S}(k) &=& 
\left(
\begin{array}{c}
  \vc{P}_0(k+b-1)
\\
\vc{P}_1(k+b-1)
\\
\vdots
\\
\vc{P}_{a-1}(k+b-1)
\\
\vc{P}(k+b-1)
\\
\vc{P}(k+b-1)
\\
\vdots
\\
\vc{P}(k+b-1)
\end{array}
\right).
\label{defn-S(k)}
\end{eqnarray}
Further let $\vc{S}(-k)$ ($k=1,2,\dots,b$) denote an $M \times bM$
matrix such that
\begin{eqnarray}
\vc{S}(-k) &=& 
\left(
 \overbrace{\vc{O},\vc{O}, \dots, \vc{O}}^{k-1}, {\vc{P}(0)}, \vc{P}(1), \dots, \vc{P}(b-k)
\right).
\label{defn-S(-k)}
\end{eqnarray}
We then rewrite (\ref{defn-T_bulk}) as
\begin{equation}
\vc{T}_+^{(a,b)}
=
\left(
\begin{array}{c|cccc}
\vc{S}(0) & \vc{S}(1) & \vc{S}(2) & \vc{S}(3) & \cdots
\\
\hline
\vc{S}(-1) & \vc{P}(b) & \vc{P}(b+1) & \vc{P}(b+2) & \cdots
\\
\vc{S}(-2) & \vc{P}(b-1) & \vc{P}(b) & \vc{P}(b+1) & \cdots
\\
\vdots    & \vdots    &  \vdots   & \vdots   & \ddots
\\
\vc{S}(-b) & \vc{P}(1) & \vc{P}(2) & \vc{P}(3) & \cdots
\\
\vc{O} & \vc{P}(0) & \vc{P}(1) & \vc{P}(2) & \cdots
\\
\vc{O} &  \vc{O} & \vc{P}(0) & \vc{P}(1) &  \cdots
\\
\vdots    & \vdots    &  \vdots   & \vdots   & \ddots
\end{array}
\right),
\label{defn-T_bulk-02}
\end{equation}
which is a GI/G/1-type Markov chain without disasters.

\begin{lem}
Suppose that the arrival process is the MAP $\{\vc{C},\vc{D}\}$, i.e.,
a BMAP characterized by $\{\vc{C},\vc{D}(1), \vc{D}(2),\dots\}$ such
that $\vc{D}(k) = \vc{O}$ for all $k\ge 2$. If $H_{\rme} \in \calL^2$,
then
\begin{eqnarray}
\overline{\vc{P}}_{\rme}(k)
&\simhm{k}& \vc{e}\vc{\varpi} \cdot \overline{H}_{\rme}(k/\lambda),
\label{asymp-overline{P}_e(k)}
\\
\ooverline{\vc{P}}(k)
&\simhm{k}& \rho \vc{e} \cdot \overline{H}_{\rme}(k/\lambda),
\label{asymp-overline{P}(k)e}
\\
\ooverline{\vc{S}}(k)
&\simhm{k}& \rho \vc{e} \cdot \overline{H}_{\rme}(k/\lambda).
\label{asymp-overline{S}(k)e}
\end{eqnarray}
\end{lem}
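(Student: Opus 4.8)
The plan is to prove the three asymptotics in order, reducing each to its predecessor so that all the analytic content rests in the first relation. I read the last two displays as relations for $\ooverline{\vc{P}}(k)\vc{e}$ and $\ooverline{\vc{S}}(k)\vc{e}$ (as the labels indicate and as dimensional consistency with the first display requires). First I would obtain (\ref{asymp-overline{P}_e(k)}) by appealing to Theorem~\ref{thm-BMAP-GI-1-01}, whose first conclusion (\ref{asymp-overline{A}_{e}(k)}) coincides with it and requires only conditions (i)--(ii) of that theorem, not (iii). Condition (i) holds with $\mu=2$ since $H_{\rme}\in\calL^2$. Condition (ii) is automatic in the MAP setting: because $\vc{D}(k)=\vc{O}$ for all $k\ge2$, the series $\sum_{k\ge1}\rme^{Q(k)}\vc{D}(k)$ collapses to the single finite term $\rme^{Q(1)}\vc{D}$, so one may take $Q(x)=\sqrt{x}\in\SC_{1/2}$, for which $x^{1/2}=O(Q(x))$ holds.

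Next, for the $\vc{P}$ relation I would upgrade the single-overline identity (\ref{eqn-ooverline{A}(k)e-MAP}) to a double-overline one by summing over the tail,
\[
\ooverline{\vc{P}}(k)\vc{e}
=\sum_{l=k+1}^{\infty}\overline{\vc{P}}(l)\vc{e}
=h\sum_{l=k+1}^{\infty}\vc{P}_{\rme}(l)\vc{D}\vc{e}
=h\,\overline{\vc{P}}_{\rme}(k)\vc{D}\vc{e}.
\]
Substituting (\ref{asymp-overline{P}_e(k)}) and using $\vc{\varpi}\vc{D}\vc{e}=\lambda$ (from (\ref{defn-lambda-MAP})) together with $\rho=\lambda h$ yields $\ooverline{\vc{P}}(k)\vc{e}\simhm{k}h\vc{e}(\vc{\varpi}\vc{D}\vc{e})\overline{H}_{\rme}(k/\lambda)=\rho\vc{e}\cdot\overline{H}_{\rme}(k/\lambda)$. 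This step is an exact identity followed by a single substitution, and uses no subexponentiality.

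Finally, for the $\vc{S}$ relation I would use the explicit block form (\ref{defn-S(k)}). Post-multiplying $\ooverline{\vc{S}}(k)$ by $\vc{e}$ and noting $\ooverline{[\vc{P}(\,\cdot\,+b-1)]}(k)=\ooverline{\vc{P}}(k+b-1)$, each block row is either $\ooverline{\vc{P}}(k+b-1)\vc{e}$ (the $b-a$ lower rows) or, via (\ref{defn-P_l(k)}), $[(-\vc{C})^{-1}\vc{D}]^{a-l}\ooverline{\vc{P}}(k+b-1)\vc{e}$ for $l=0,\dots,a-1$. Two elementary facts then collapse every block to a common limit: (a) $(-\vc{C})^{-1}\vc{D}$ is stochastic, since $(\vc{C}+\vc{D})\vc{e}=\vc{0}$ gives $\vc{D}\vc{e}=(-\vc{C})\vc{e}$ and hence $[(-\vc{C})^{-1}\vc{D}]^{a-l}\vc{e}=\vc{e}$; and (b) $H_{\rme}\in\calL^2\subset\calL$ (Proposition~\ref{rem-higher-order-L}(i)) gives the constant-shift insensitivity $\overline{H}_{\rme}((k+b-1)/\lambda)\simhm{k}\overline{H}_{\rme}(k/\lambda)$. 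Applying (\ref{asymp-overline{P}(k)e}) with the shifted index, every block row tends to $\rho\vc{e}\cdot\overline{H}_{\rme}(k/\lambda)$, giving (\ref{asymp-overline{S}(k)e}).

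I do not expect a genuine obstacle: the only nontrivial input is the appeal to Theorem~\ref{thm-BMAP-GI-1-01} in the first step, while the remaining two steps are exact matrix identities combined with the shift-insensitivity of a long-tailed tail. The sole point demanding care is the index bookkeeping $k\mapsto k+b-1$ induced by folding the first $b$ levels into the boundary super-level, together with checking that each of the $b$ block rows of $\vc{S}(k)$ reduces to $\vc{e}$ after post-multiplication; both are dispatched by fact~(a) above.
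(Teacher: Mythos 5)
Your proposal is correct and follows essentially the same route as the paper: invoke Theorem~\ref{thm-BMAP-GI-1-01}(i)--(ii) for (\ref{asymp-overline{P}_e(k)}), sum the identity (\ref{eqn-ooverline{A}(k)e-MAP}) and use $\vc{\varpi}\vc{D}\vc{e}=\lambda$ for (\ref{asymp-overline{P}(k)e}), then collapse the block rows of $\vc{S}(k)$ via the stochasticity of $(-\vc{C})^{-1}\vc{D}$ for (\ref{asymp-overline{S}(k)e}). Your additional care with the $k\mapsto k+b-1$ index shift (absorbed by $H_{\rme}\in\calL$) and the explicit choice $Q(x)=\sqrt{x}$ only makes explicit what the paper leaves implicit.
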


\begin{proof}
Since conditions (i) and (ii) of Theorem~\ref{thm-BMAP-GI-1-01} are
satisfied, the asymptotic equation (\ref{asymp-overline{P}_e(k)})
hold.  Substituting (\ref{asymp-overline{P}_e(k)}) into
(\ref{eqn-ooverline{A}(k)e-MAP}) and using (\ref{defn-lambda-MAP}) and
$\rho=\lambda h$ yield
\[
\ooverline{\vc{P}}(k)\vc{e} 
\simhm{k} h \vc{e} \vc{\varpi}\vc{D}\vc{e}\cdot \overline{H}_{\rme}(k/\lambda)
= \rho \vc{e} \cdot \overline{H}_{\rme}(k/\lambda),
\]
which shows that (\ref{asymp-overline{P}(k)e}) holds. Further applying
(\ref{asymp-overline{P}(k)e}) to (\ref{defn-P_l(k)}) and using
$(-\vc{C})^{-1}\vc{D}\vc{e}=\vc{e}$, we obtain for $l=0,1,\dots,b-1$,
\[
\ooverline{\vc{P}}_l(k)\vc{e}
\simhm{k} \left[ (-\vc{C})^{-1}\vc{D} \right]^{a-l} \rho \vc{e} 
\cdot \overline{H}_{\rme}(k/\lambda)
= \rho \vc{e} \cdot \overline{H}_{\rme}(k/\lambda).
\]
Finally, incorporating this and (\ref{asymp-overline{P}(k)e}) into
(\ref{defn-S(k)}) yields (\ref{asymp-overline{S}(k)e}).
\end{proof}

\begin{thm}
If $H_{\rme} \in \calL^2 \cap \calS$, then
\begin{eqnarray}
\overline{\vc{y}}{}_+^{(a,b)}(k) 
&\simhm{k}&
{\rho \over b - \rho}\vc{\varpi}\cdot \overline{H}_{\rme}(k/\lambda),
\label{asymp-overline{y}_+^{(a,b)}(k)}
\\
\overline{\vc{y}}^{(a,b)}(k) 
&\simhm{k}& {h \over \eta}
{b \over b - \rho}\vc{\varpi}\cdot \overline{H}_{\rme}(k/\lambda).
\label{asymp-overline{y}^{(a,b)}(k)}
\end{eqnarray}
\end{thm}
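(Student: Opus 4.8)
The plan is to recognize the departure-epoch chain $\vc{T}_+^{(a,b)}$ in (\ref{defn-T_bulk-02}) as a GI/G/1-type Markov chain without disasters and to apply Theorem~\ref{thm-original}. Reading off the block-Toeplitz interior of (\ref{defn-T_bulk-02}), the repeating blocks are $\vc{A}(k) = \vc{P}(b+k)$ for $k \ge -b$ and $\vc{A}(k) = \vc{O}$ for $k < -b$, while the boundary level has dimension $bM$ with $\vc{B}(k) = \vc{S}(k)$ ($k \ge 1$) and $\vc{B}(-k) = \vc{S}(-k)$ ($1 \le k \le b$). Hence $\vc{A} = \sum_{k}\vc{A}(k) = \widehat{\vc{P}}(1)$, whose stationary vector is $\vc{\varpi}$ (as in the proof of Corollary~\ref{coro-BMAP}), and using (\ref{eqn-rho}) together with $\widehat{\vc{P}}(1)\vc{e} = \vc{e}$ the drift is $\sigma = \vc{\varpi}\sum_{j\ge0}(j-b)\vc{P}(j)\vc{e} = \rho - b$, which is negative by (\ref{stability-cond-bulk}).

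First I would verify Assumption~\ref{assu-1}: irreducibility and stochasticity are inherited from the model, the finite-mean conditions for $\{\vc{A}(k)\}$ and $\{\vc{B}(k)\}$ follow from $\sum_{j} j\vc{P}(j)\vc{e} = \rho < \infty$ and the boundedness of the finitely many factors $[(-\vc{C})^{-1}\vc{D}]^{a-l}$ in (\ref{defn-P_l(k)}), and $\sigma = \rho - b < 0$ was just obtained. Next I would verify Assumption~\ref{assu-tail-A(k)e-B(k)e} for the $\bbZ_+$-valued variable $Y$ with $\PP(Y > k) = \overline{H}_{\rme}(k/\lambda)$, which lies in $\calS$ because $H_{\rme} \in \calS$ (subexponentiality is preserved under the positive scaling $T \mapsto \lambda T$ and integer discretization, using $H_{\rme} \in \calL$). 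Since $\ooverline{\vc{A}}(k)\vc{e} = \ooverline{\vc{P}}(b+k)\vc{e}$ and $\ooverline{\vc{B}}(k)\vc{e} = \ooverline{\vc{S}}(k)\vc{e}$, the Lemma's asymptotics (\ref{asymp-overline{P}(k)e}) and (\ref{asymp-overline{S}(k)e}), combined with the long-tailedness of $H_{\rme}$ to absorb the constant shift $b/\lambda$, give $\vc{c}_A = \rho\vc{e}$ and $\vc{c}_B = \rho\vc{e}$.

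Theorem~\ref{thm-original} then applies. The one bookkeeping point is that its conclusion is stated in the GI/G/1-type level index $n$, where level $n$ corresponds to the queue length $b-1+n$; translating back and again using long-tailedness to discard the shift yields $\overline{\vc{y}}{}_+^{(a,b)}(k) \simhm{k} C\,\overline{H}_{\rme}(k/\lambda)$ with the same constant $C$. The constant simplifies cleanly: with $\vc{x}(0)$ the boundary mass and $\overline{\vc{x}}(0)$ the mass of the remaining levels, $\vc{x}(0)\vc{c}_B + \overline{\vc{x}}(0)\vc{c}_A = \rho\,(\vc{x}(0)\vc{e} + \overline{\vc{x}}(0)\vc{e}) = \rho$, since the bracket is the total probability $1$; dividing by $-\sigma = b-\rho$ and multiplying by $\vc{\pi} = \vc{\varpi}$ gives (\ref{asymp-overline{y}_+^{(a,b)}(k)}).

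For the time-stationary formula (\ref{asymp-overline{y}^{(a,b)}(k)}) I would rewrite (\ref{eqn-y^{(a,b)}(k)-03'}) as a single convolution $\vc{y}^{(a,b)}(k) = (h/\eta)\,\vc{v}\ast\vc{P}_{\rme}(k)$ valid for $k \ge a+1$, where $\vc{v}(a) = \sum_{l=0}^a \vc{y}_+^{(a,b)}(l)[(-\vc{C})^{-1}\vc{D}]^{a-l}$, $\vc{v}(l) = \vc{y}_+^{(a,b)}(l)$ for $l \ge a+1$, and $\vc{v}(l) = \vc{0}$ otherwise; summing the tail gives $\overline{\vc{y}}^{(a,b)}(k) = (h/\eta)\,\overline{\vc{v}\ast\vc{P}_{\rme}}(k)$ for $k \ge a$. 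Writing $\vc{M} = \sum_l \vc{v}(l)$ and $\vc{N} = \widehat{\vc{P}}_{\rme}(1)$, I would apply Proposition~\ref{Masu11-prop} with $U = Y$, whose hypotheses hold with $\widetilde{\vc{M}} = \tfrac{\rho}{b-\rho}\vc{\varpi}$ (because $\overline{\vc{v}}(k) = \overline{\vc{y}}{}_+^{(a,b)}(k)$ for $k\ge a$, by the first part) and $\widetilde{\vc{N}} = \vc{e}\vc{\varpi}$ (from (\ref{asymp-overline{P}_e(k)})). The limit $\widetilde{\vc{M}}\vc{N} + \vc{M}\widetilde{\vc{N}} = \tfrac{\rho}{b-\rho}\vc{\varpi}\widehat{\vc{P}}_{\rme}(1) + \vc{M}\vc{e}\vc{\varpi}$ collapses via $\vc{\varpi}\widehat{\vc{P}}_{\rme}(1) = \vc{\varpi}$ (see (\ref{add-eqn-03a})) and $\vc{M}\vc{e} = \sum_{l\ge0}\vc{y}_+^{(a,b)}(l)\vc{e} = 1$, the latter using $(-\vc{C})^{-1}\vc{D}\vc{e} = \vc{e}$ to kill the matrix factors in $\vc{v}(a)$, to $(\tfrac{\rho}{b-\rho}+1)\vc{\varpi} = \tfrac{b}{b-\rho}\vc{\varpi}$, which yields (\ref{asymp-overline{y}^{(a,b)}(k)}). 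The \emph{main obstacle} is the two-layer bookkeeping: reading the block structure correctly (especially the $bM$-dimensional boundary and the $\vc{A}(k) = \vc{P}(b+k)$ shift) so that $\sigma$ and the constants come out right, and identifying the correct convolution kernel $\vc{v}$ together with its irregular entry $\vc{v}(a)$ for the time-stationary part; once these are pinned down, the tail computations are routine applications of the results already in hand.
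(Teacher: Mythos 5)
Your proposal is correct and follows essentially the same route as the paper: identify $\vc{T}_+^{(a,b)}$ with the GI/G/1-type form via $\vc{A}(k)=\vc{P}(k+b)$, $\vc{B}(k)=\vc{S}(k)$, get $\sigma=\rho-b$ and $\vc{c}_A=\vc{c}_B=\rho\vc{e}$ from the preceding lemma, apply Theorem~\ref{thm-original}, and then treat (\ref{eqn-y^{(a,b)}(k)-03'}) with Proposition~\ref{Masu11-prop} and $\vc{\varpi}\widehat{\vc{P}}_{\rme}(1)=\vc{\varpi}$. The only cosmetic difference is that you fold the irregular boundary term into a modified convolution kernel $\vc{v}$, whereas the paper adds and subtracts $\sum_{l=0}^{a}\vc{y}_+^{(a,b)}(l)\overline{\vc{P}}_{\rme}(k-l)$ and lets the two boundary contributions cancel; the computation is the same.
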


\begin{proof}
Note that $\vc{T}_+^{(a,b)}$ in (\ref{defn-T_bulk-02}) is
equivalent to $\vc{T}$ in (\ref{defn-T}) with 
\begin{eqnarray}
\vc{A}(k) = 
\left\{
\begin{array}{ll}
\vc{P}(k+b), & k \ge -b,
\\
\vc{O}, & k \le -b-1,
\end{array}
\right.
\qquad
\vc{B}(k) = 
\left\{
\begin{array}{ll}
\vc{S}(k), & k \ge -b,
\\
\vc{O}, & k \le -b-1.
\end{array}
\right.
\label{relation-AB-P-bulk}
\end{eqnarray}
It then follows from (\ref{eqn-rho}) and (\ref{stability-cond-bulk})
that
\[
\vc{\varpi}\sum_{k\in\bbZ} k \vc{A}(k)\vc{e} 
= \vc{\varpi}\sum_{k=-b}^{\infty} k \vc{P}(k+b)\vc{e} 
= \rho - b < 0.
\]
It also follows from (\ref{asymp-overline{P}(k)e}),
(\ref{asymp-overline{S}(k)e}) and (\ref{relation-AB-P-bulk}) that
\[
\ooverline{\vc{A}}(k)\vc{e} 
\simhm{k} \rho \vc{e} \cdot \overline{H}_{\rme}(k/\lambda),
\quad 
\ooverline{\vc{B}}(k)\vc{e} 
\simhm{k} \rho \vc{e} \cdot \overline{H}_{\rme}(k/\lambda),
\]
where the dimensions of $\ooverline{\vc{A}}(k)\vc{e}$ and
$\ooverline{\vc{B}}(k)\vc{e}$ are different each other. Combining
these results and Theorem~\ref{thm-original} yields
\[
\overline{\vc{y}}{}_+^{(a,b)}(k) 
\simhm{k} 
{\rho\sum_{k=0}^{\infty}\vc{y}_+^{(a,b)}(k)\vc{e} \over b - \rho}\vc{\varpi}\cdot \overline{H}_{\rme}(k/\lambda)
= {\rho \over b - \rho}\vc{\varpi}\cdot \overline{H}_{\rme}(k/\lambda),
\]
which shows that (\ref{asymp-overline{y}_+^{(a,b)}(k)}) holds.

Next we prove (\ref{asymp-overline{y}^{(a,b)}(k)}).  From
(\ref{eqn-y^{(a,b)}(k)-03'}), we have for $k \ge a$,
\begin{eqnarray*}
\overline{\vc{y}}{}^{(a,b)}(k)
&=& {h \over \eta}
\sum_{l=0}^a \vc{y}_+^{(a,b)}(l) \left[ (-\vc{C})^{-1}\vc{D} \right]^{a-l}
\overline{\vc{P}}_{\rme}(k-a)
\nonumber
\\
&& \quad {} + {h \over \eta} \overline{\vc{y}_+^{(a,b)} \ast \vc{P}_{\rme}}(k)
- {h \over \eta}
\sum_{l=0}^a \vc{y}_+^{(a,b)}(l)\overline{\vc{P}}_{\rme}(k-l).
\end{eqnarray*}
Applying (\ref{asymp-overline{P}_e(k)}),
(\ref{asymp-overline{y}_+^{(a,b)}(k)}) and
Proposition~\ref{Masu11-prop} to the above equation and using the
long-tailed property of $H_{\rme}$, we obtain
\begin{eqnarray*}
\lim_{k\to\infty}
{ \overline{\vc{y}}{}^{(a,b)}(k) \over \overline{H}_{\rme}(k/\lambda) }
&=& {h \over \eta}
\sum_{l=0}^a \vc{y}_+^{(a,b)}(l) \left[ (-\vc{C})^{-1}\vc{D} \right]^{a-l}
\vc{e}\vc{\varpi} - {h \over \eta}
\sum_{l=0}^a \vc{y}_+^{(a,b)}(l)\vc{e}\vc{\varpi}
\nonumber
\\
&& \quad {} + {h \over \eta} 
\left[
\sum_{l=0}^{\infty}\vc{y}_+^{(a,b)}(l)\vc{e}\vc{\varpi}
+ {\rho \over b - \rho}\vc{\varpi} \sum_{l=0}^{\infty} \vc{P}_{\rme}(l)
\right]
\nonumber
\\
&=& {h \over \eta}
\sum_{l=0}^a \vc{y}_+^{(a,b)}(l) 
\vc{e}\vc{\varpi} - {h \over \eta}
\sum_{l=0}^a \vc{y}_+^{(a,b)}(l)\vc{e}\vc{\varpi}
 + {h \over \eta} 
\left[
\vc{\varpi}
+ {\rho \over b - \rho}\vc{\varpi} 
\right]
\nonumber
\\
&=&  {h \over \eta} {b \over b - \rho}\vc{\varpi},
\end{eqnarray*}
where the second equality follows from 
\[
(-\vc{C})^{-1}\vc{D}\vc{e}=\vc{e}, \quad
\vc{\varpi} \sum_{l=0}^{\infty} \vc{P}_{\rme}(l) = \vc{\varpi}, \quad
\sum_{l=0}^{\infty} \vc{y}_+^{(a,b)}(l) \vc{e}= 1.
\]
The proof is completed.
\end{proof}

\begin{rem}
Suppose $a=b=1$. It then follows that the MAP/G${}^{(a,b)}$/1 queue is
reduced to the standard MAP/GI/1 queue, which is a special case of the
BMAP/GI/1 queue.  Further from (\ref{defn-eta}) and
(\ref{eqn-y^{(a,b)}(k)-01}), we have
\begin{eqnarray}
1 
&=& { h \over \eta }  
+ {\vc{y}_+^{(1,1)}(0) (- \vc{C} )^{-1}
\vc{e} \over \eta } 
= { h \over \eta } + \vc{y}^{(1,1)}(0)\vc{e}
= { h \over \eta } + 1 - \rho,
\label{add-eqn-05}
\end{eqnarray}
where the last equality holds because $\vc{y}^{(1,1)}(0)\vc{e} = 1 -
\rho$ (due to Little's law). The equation (\ref{add-eqn-05}) yields
$h/\eta = \rho$. Substituting this into
(\ref{asymp-overline{y}^{(a,b)}(k)}), we have
\[
\overline{\vc{y}}^{(1,1)}(k) 
\simhm{k}
{\rho \over 1 - \rho}\vc{\varpi}\cdot \overline{H}_{\rme}(k/\lambda),
\]
which is consistent with (\ref{asymp-overline{p}(k)-01}) in
Theorem~\ref{thm-BMAP-GI-1-01}.
\end{rem}

\appendix

\section{Proofs}

\subsection{Proof of Lemma~\ref{lem-AD-BD}}\label{proof-lem-AD=BD}

We prove (\ref{limit-AD}) only. The proof of (\ref{limit-BD}) is
omitted because it is similar to that of (\ref{limit-AD}).

According to Proposition~\ref{prop-lim-L(ntau+l)}, we fix $\varepsilon
>0$ arbitrarily and $m_{\ast}:= m_{\ast} (\varepsilon)$ such that for
all $m \ge m_{\ast}$ and $l = 0,1,\dots,\tau-1$,
\begin{eqnarray}
&&
\vc{e}(\tau\vc{\psi} - \varepsilon\vc{e}^{\rmt})
\le
\sum_{l=0}^{\tau-1}
\vc{L} (\lfloor m / \tau \rfloor \tau + l) 
\le \vc{e}(\tau\vc{\psi} + \varepsilon\vc{e}^{\rmt}).
\label{eq-ep-D}
\end{eqnarray}
Further since $\vc{L}(m) \leq \vc{e}\vc{e}^{\rmt}$ for all $m \in
\bbN$, it follows from (\ref{eqn-lim-A(k)e}) and $Y \in \calL$ that
\begin{eqnarray*}
\limsup_{k \to \infty} \sum_{m=1}^{m_{\ast}-1} 
{ \overline{\vc{A}}(k+m) \vc{L}(m) \over \PP(Y > k) }
&\leq&
\sum_{m=1}^{m_{\ast}-1} \limsup_{k \to \infty} 
{ \ooverline{\vc{A}}(k+m-1)\vc{e}\vc{e}^{\rmt} 
- \ooverline{\vc{A}}(k+m)\vc{e}\vc{e}^{\rmt}  \over \PP(Y > k) }
= \vc{O},
\label{eqn-lim-AD-1}
\end{eqnarray*}
and thus 
\begin{eqnarray}
\lim_{k\to \infty} \sum_{m=1}^{\infty} 
{ \overline{\vc{A}} (k+m) \vc{L}(m) 
\over \PP(Y > k) }
=
\lim_{k \to \infty} \sum_{m=m_{\ast}}^{\infty}
{ \overline{\vc{A}} (k+m) \vc{L}(m) 
\over \PP(Y > k) }.
\label{add-eqn-07}
\end{eqnarray}

To prove (\ref{limit-AD}) it suffices to show that
for any fixed $\varepsilon > 0$,
\begin{eqnarray}
\limsup_{k \to \infty}
\sum_{m=m_{\ast}}^{\infty}
{\overline{\vc{A}}(k+m) \vc{L}(m) \over \PP(Y > k) }
&\le& 
\vc{c}{}_A(\vc{\psi} + \varepsilon\vc{e}^{\rmt}/\tau),
\label{add-eqn-13a}
\\
\liminf_{k \to \infty}
\sum_{m=m_{\ast}}^{\infty}
{\overline{\vc{A}}(k+m) \vc{L}(m) \over \PP(Y > k) }
&\ge& 
\vc{c}{}_A(\vc{\psi} - \varepsilon\vc{e}^{\rmt}/\tau).
\label{add-eqn-13b}
\end{eqnarray}
Indeed, letting $\varepsilon
\downarrow 0$ in (\ref{add-eqn-13a}) and (\ref{add-eqn-13b}) we obtain
\[
\lim_{k \to \infty} \sum_{m=m_{\ast}}^{\infty}
{ \overline{\vc{A}} (k+m) \vc{L}(m) 
\over \PP(Y > k) } = \vc{c}{}_A\vc{\psi}
= {\vc{c}_A\vc{\pi}(\vc{I} - \vc{R} ) (\vc{I} - \vc{\Phi}(0)) 
\over -\sigma },
\]
where the second equality is due to (\ref{defn-psi}). Substituting the
obtained equation into (\ref{add-eqn-07}), we have (\ref{limit-AD}).

We first prove (\ref{add-eqn-13a}).  By definition,
$\{\overline{\vc{A}}(k);k\in\bbZ_+\}$ is nonincreasing. We thus obtain
\begin{eqnarray*}
\sum_{m=m_{\ast}}^{\infty}
\overline{\vc{A}} (k+m) \vc{L}(m) 
&\le&
\sum_{n=  \lfloor m_{\ast}/\tau \rfloor}^{\infty}
\sum_{l=0}^{\tau-1}
\overline{\vc{A}}(k + n\tau + l) \vc{L}(n\tau + l) 
\nonumber
\\
&\le& 
\sum_{n=\lfloor m_{\ast}/\tau \rfloor}^{\infty}
\overline{\vc{A}}(k + n\tau) 
\sum_{l=0}^{\tau-1}
\vc{L}(n\tau + l)
\nonumber
\\
&\le& 
\sum_{n=\lfloor m_{\ast}/\tau \rfloor}^{\infty}
{1 \over \tau}
\sum_{i=0}^{\tau-1} \overline{\vc{A}}(k + n\tau - i)  
\cdot
\sum_{l=0}^{\tau-1} \vc{L}(n\tau + l).
\end{eqnarray*}
Substituting (\ref{eq-ep-D}) into the above
inequality yields
\begin{eqnarray}
\sum_{m=m_{\ast}}^{\infty}
{ \overline{\vc{A}} (k+m) \vc{L}(m) 
\over \PP(Y > k) }
&\le& 
\sum_{n=\lfloor m_{\ast}/\tau \rfloor}^{\infty}
\sum_{i=0}^{\tau-1}
{ \overline{\vc{A}}(k + n\tau - i) \vc{e}
\over \PP(Y > k) }
(\vc{\psi} + \varepsilon \vc{e}^{\rmt}/\tau).
\nonumber
\\
&=& 
{ \ooverline{\vc{A}}(k + \lfloor m_{\ast}/\tau \rfloor \tau - \tau) \vc{e}
\over \PP(Y > k) }
(\vc{\psi} + \varepsilon \vc{e}^{\rmt}/\tau).
\label{add-eqn-23}
\end{eqnarray}
From (\ref{add-eqn-23}), (\ref{eqn-lim-A(k)e}) and $Y \in
\calL$, we have (\ref{add-eqn-13a}).

Next we prove (\ref{add-eqn-13b}). Since $\{\overline{\vc{A}}(k)\}$ is
nonincreasing, we have
\begin{eqnarray*}
\sum_{m=m_{\ast}}^{\infty}
\overline{\vc{A}} (k+m) \vc{L}(m) 
&\ge&
\sum_{n=\lceil m_{\ast}/\tau \rceil}^{\infty}
\sum_{l=0}^{\tau-1}
\overline{\vc{A}}(k + n\tau + l) \vc{L}(n\tau + l) 
\nonumber
\\
&\ge& 
\sum_{n=\lceil m_{\ast}/\tau \rceil}^{\infty}
\overline{\vc{A}}(k + n\tau + \tau + 1) 
\sum_{l=0}^{\tau-1} \vc{L}(n\tau + l)
\nonumber
\\
&\ge& 
\sum_{n=\lceil m_{\ast}/\tau \rceil}^{\infty}
{1 \over \tau}
\sum_{i=1}^{\tau} 
\overline{\vc{A}}(k + n\tau + \tau + i) 
\cdot \sum_{l=0}^{\tau-1} \vc{L}(n\tau + l).
\end{eqnarray*}
Combining this with (\ref{eq-ep-D}) yields
\begin{eqnarray*}
\sum_{m=m_{\ast}}^{\infty}
{ \overline{\vc{A}} (k+m) \vc{L}(m) 
\over \PP(Y > k) }
&\ge& 
\sum_{n=\lceil m_{\ast}/\tau \rceil+1}^{\infty}
\sum_{i=1}^{\tau}
{ \overline{\vc{A}}(k + n\tau + i) \vc{e}
\over \PP(Y > k) }
(\vc{\psi} - \varepsilon \vc{e}^{\rmt}/\tau).
\nonumber
\\
&=& 
{ \ooverline{\vc{A}}(k + \lceil m_{\ast}/\tau \rceil \tau + \tau) \vc{e}
\over \PP(Y > k) }
(\vc{\psi} - \varepsilon \vc{e}^{\rmt}/\tau).
\end{eqnarray*}
Therefore similarly to (\ref{add-eqn-13a}), we can obtain
(\ref{add-eqn-13b}).  \qed

\section{Cumulative process sampled at heavy-tailed random times}

This section summarizes some of the results presented in
\citet{Masu13}, which are used in Sections \ref{sec-app-01} and \ref{sec-app-02}.

Let $\{B(t);t\ge0\}$ denote a stochastic process on $
(-\infty,\infty)$, where $|B(0)| < \infty$ with probability one
(w.p.1). We assume that there exist regenerative points $0 \le \tau_0
< \tau_1 < \tau_2 < \cdots$ such that $\{B(t+\tau_n)-B(\tau_n); t \ge
0\}$ ($n \in \bbZ_+$) is independent of $\{B(u);0 \le u < \tau_n\}$
and is stochastically equivalent to $\{B(t+\tau_0)-B(\tau_0); t \ge
0\}$. The process $\{B(t);t\ge0\}$ is called {\it (regenerative)
  cumulative process}, which is introduced by \citet{Smit55}.

Let $\Delta\tau_0 = \tau_0$ and $\Delta\tau_n = \tau_n-\tau_{n-1}$ for
$n\in\bbN$. Let
\begin{eqnarray*}
\Delta B_n 
&=& \left\{
\begin{array}{ll}
B(\tau_0), & n = 0,
\\
B(\tau_n)-B(\tau_{n-1}), & n \in \bbN,
\end{array}
\right.
\quad
\Delta B_n^{\ast} 
=
\left\{
\begin{array}{ll}
\dm \sup_{0 \le t \le \tau_0} \max(B(t), 0), & n = 0,
\\
\dm\sup_{\tau_{n-1} \le t \le \tau_n} B(t) - B(\tau_{n-1}), & n \in \bbN.
\end{array}
\right.
\nonumber
\end{eqnarray*}
It is easy to see that $\Delta B_n^{\ast} \ge \Delta B_n$ for
$n\in\bbZ_+$ and that $\{\Delta \tau_n;n\in\bbN\}$ (resp.\ $\{\Delta
B_n;n\in\bbN\}$ and $\{\Delta B_n^{\ast};n\in\bbN\}$) is a sequence of
i.i.d.\ random variables, which is independent of $\Delta \tau_0$
(resp.\ $\Delta B_0$ and $\Delta B_0^{\ast}$).

\begin{rem}\label{rem-BMAP}
The counting process $\{N(t);t \ge 0\}$ of BMAP
$\{\vc{C},\vc{D}(1),\vc{D}(2),\dots\}$ is a cumulative process such
that regenerative points are hitting times to any fixed background
state and the regenerative cycle follows a phase-type distribution (see equations (3.3)--(3.5) in \citealt{Masu13}).
\end{rem}

We now assume that
\begin{eqnarray*}
&&
\PP(0\le \Delta \tau_n < \infty)   
= \PP(0 \le \Delta B_n^{\ast} < \infty) = 1~~(n=0,1),
\nonumber
\\
&&
\EE[|\Delta B_1|] <\infty,~~~
0 < \EE[\Delta \tau_1] < \infty,~~~
b := {\EE[\Delta B_1] \big/ \EE[\Delta \tau_1] } > 0.
\label{cond-SLLN-01}
\end{eqnarray*}
We then obtain the following results.
\begin{prop}[\citealt{Masu13}, Theorem~3.3]\label{appendix-prop-01}
Suppose that $T$ is a nonnegative random variable independent of
$\{B(t);t\ge0\}$. Further suppose that (i) $T \in \calL^{\mu}$ for
some $\mu\ge2$; (ii) $\EE[(\Delta \tau_1)^2] < \infty$ and
$\EE[(\Delta B_1)^2] < \infty$; and (iii) $\EE[\exp\{Q(\Delta
  B_n^{\ast})\}] < \infty$ ($n=0,1$) for some cumulative hazard
function $Q \in \SC$ such that $x^{1/\mu} = O(Q(x))$. We then have
$\PP(B(T) > bx) \simhm{x} \PP(T > x)$.
\end{prop}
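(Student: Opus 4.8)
The plan is to condition on $T$, use its independence of $\{B(t);t\ge0\}$, and transfer the heavy tail of $T$ to $B(T)$ through the strong law $B(t)/t\to b$ for a cumulative process. Write $Z(t)=B(t)-bt$ for the centred process and $w(x)=x^{1-1/\mu}$ for the additive insensitivity scale of $\calL^{\mu}$. Using $\{B(t)>bx\}=\{Z(t)>b(x-t)\}$ and splitting according to whether $T$ exceeds $x\pm\xi w(x)$, I obtain for every fixed $\xi>0$ the two-sided bound
\begin{align*}
\PP(B(T)>bx)
&\ge \PP\big(T>x+\xi w(x)\big)-\int_{x+\xi w(x)}^{\infty}\PP\big(Z(t)\le-b(t-x)\big)\,\PP(T\in\rmd t),
\\
\PP(B(T)>bx)
&\le \PP\big(T>x-\xi w(x)\big)+\int_{0}^{x-\xi w(x)}\PP\big(Z(t)>b(x-t)\big)\,\PP(T\in\rmd t).
\end{align*}
By Proposition~\ref{rem-higher-order-L}(iii) the leading terms satisfy $\PP(T>x\pm\xi w(x))\simhm{x}\PP(T>x)$, so the theorem reduces to showing that both remainder integrals are $o(\PP(T>x))$ as $x\to\infty$ and then $\xi\to\infty$. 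The crucial structural feature is that in each integrand the deviation threshold $b|t-x|$ is at least $b\xi w(x)$ and grows linearly as $t$ leaves a neighbourhood of $x$.

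The core of the argument is a deviation estimate for $Z$, which I would derive from the regenerative structure: on $[0,t]$ there are about $t/\EE[\Delta\tau_1]$ cycles, and up to the (almost surely finite) boundary cycle $Z(t)$ is a sum of the i.i.d.\ mean-zero blocks $\Delta B_n-b\,\Delta\tau_n$, with a maximal inequality controlling $\sup_{s\le t}|Z(s)|$ through the per-cycle suprema $\Delta B_n^{\ast}$. The two hypotheses then act in complementary regimes, and the up/down asymmetry of $Z$ meshes with where the mass of $T$ sits. For the upper-bound remainder the relevant event is an \emph{up}-deviation $\PP(Z(t)>b(x-t))$ integrated against the \emph{bulk} of $T$ (mass of order one, near the mean, where $x-t\asymp x$), so Chebyshev is too weak; here condition~(iii), namely $\EE[\exp\{Q(\Delta B_n^{\ast})\}]<\infty$ with $Q\in\SC$ and $x^{1/\mu}=O(Q(x))$, yields via a Chernoff bound a decay like $\exp\{-\Theta(Q(x))\}=\exp\{-\Theta(x^{1/\mu})\}$, which beats $\PP(T>x)=\exp\{-o(x^{1/\mu})\}$ from Proposition~\ref{rem-higher-order-L}(ii). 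For the lower-bound remainder the relevant event is a \emph{down}-deviation $\PP(Z(t)\le-b(t-x))$ integrated only over the \emph{tail} $\{T>x\}$, and with the extra $1/t$ gain coming from the growing threshold the second moments of condition~(ii) ($\EE[(\Delta\tau_1)^2],\EE[(\Delta B_1)^2]<\infty$) already suffice through Chebyshev.

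To assemble the bounds I would split each remainder into a near band $|t-x|=O(w(x))$, where the central-limit scale $\sqrt{t}\asymp\sqrt{x}$ fits inside $w(x)$ precisely because $\mu\ge2$, so the Gaussian estimate from condition~(ii) summed over unit-$w(x)$ bands contributes at most $g(\xi)\,\PP(T>x)$ with $g(\xi)\downarrow0$; and a far part $|t-x|\gtrsim\varepsilon x$, handled by the exponential-moment bound on the up-side and by Chebyshev with the $1/t$ gain on the down-side as above. Letting first $x\to\infty$ and then $\xi\to\infty$ kills the remainders and leaves $\PP(B(T)>bx)\simhm{x}\PP(T>x)$. The main obstacle is exactly the deviation estimate of the second paragraph: one needs bounds on the up- and down-deviations $\PP(Z(t)>b|t-x|)$ and $\PP(Z(t)\le-b|t-x|)$ that are uniform over the whole range of $t$ and that interpolate correctly between the moderate-deviation (Gaussian) regime governed by condition~(ii) and the large-deviation regime governed by condition~(iii), so that after integration against $\PP(T\in\rmd t)$ the result is genuinely $o(\PP(T>x))$ against a tail as heavy as that of an $\calL^{\mu}$ variable. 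Everything else—the conditioning, the elementary inclusions, and the insensitivity substitutions—is routine.
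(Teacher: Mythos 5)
The paper does not actually prove this proposition: it is imported verbatim as Theorem~3.3 of \citet{Masu13}, so there is no in-paper argument to compare against. Your plan does reproduce the architecture of the proof in that reference (and of the original Asmussen--Kl\"uppelberg--Sigman argument): condition on $T$, split at $x\pm\xi x^{1-1/\mu}$, absorb the central band into the main term via the $\calL^{\mu}$-insensitivity of Proposition~\ref{rem-higher-order-L}(iii), and reduce everything to up- and down-deviation bounds for $Z(t)=B(t)-bt$. The down-deviation/Chebyshev part, and the observation that $\mu\ge2$ makes the CLT scale $\sqrt{x}$ fit inside $x^{1-1/\mu}$, are both correct.

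The genuine gap is the up-deviation estimate, and the one concrete tool you name there does not work as stated. A ``Chernoff bound'' from $\EE[\exp\{Q(\Delta B_n^{\ast})\}]<\infty$ does not give $\PP(Z(t)>y)\lesssim\exp\{-\Theta(Q(y))\}$: since $Q\in\SC$ is eventually concave with $Q(0)\ge0$ it is subadditive, so applying Markov's inequality to $\exp\{Q(\cdot)\}$ of a sum of $n\asymp t$ cycle increments produces a prefactor of order $(\EE[\exp\{Q(\cdot)\}])^{n}$ growing geometrically in $t\asymp x$, while the gain is only $\exp\{-Q(y)\}=\exp\{-O(x^{\alpha})\}$ with $\alpha<1$; the bound diverges. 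What is actually needed (and what the cited proof supplies) is a Fuk--Nagaev-type truncation: cut each cycle increment at level $\varepsilon y$, control the truncated centred sum by a Bernstein/Prokhorov inequality using the second moments of condition~(ii), and bound the ``one big cycle'' event by $n\,\PP(\Delta B_1^{\ast}>\varepsilon y)\le n\,\EE[\exp\{Q(\Delta B_1^{\ast})\}]\exp\{-Q(\varepsilon y)\}$ --- this is where condition~(iii) and $x^{1/\mu}=O(Q(x))$ really enter. Relatedly, your upper-bound remainder needs this deviation bound to decay geometrically over successive $x^{1-1/\mu}$-bands so as to dominate the geometric growth of $\PP(T>x-jx^{1-1/\mu})/\PP(T>x)$ obtained by iterating the $\calL^{\mu}$ property, and the delay cycle (the $n=0$ case of (iii)) and the incomplete cycle straddling $t$ must be handled separately; none of this bookkeeping appears in the sketch. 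In short, the plan is the right one, but the step you yourself flag as ``the main obstacle'' is where the entire proof lives, and the instrument you propose for it fails.
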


\begin{coro}\label{appendix-coro-01}
Suppose that $T$ is a nonnegative random variable independent of
$\{(N(t),J(t));t\ge0\}$, where $\{N(t)\}$ and $\{J(t)\}$ denote the
counting process and the background Markov chain, respectively, of
BMAP $\{\vc{C},\vc{D}(1),\vc{D}(2),\dots\}$ introduced in
subsection~\ref{subsec-BMAP}. Suppose that (i) $T \in \calL^{\mu}$ for
some $\mu\ge2$; and (ii) $\sum_{k=1}^{\infty} \exp\{Q(k)\} \vc{D}(k) <
\infty$ ($n=0,1$) for some cumulative hazard function $Q \in \SC$ such
that $x^{1/\mu} = O(Q(x))$. We then have $\PP(N(T) > k) \simhm{k}
\PP(T > k/\lambda)$.
\end{coro}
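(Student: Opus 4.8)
The plan is to recognize that the corollary is exactly the specialization of Proposition~\ref{appendix-prop-01} to the cumulative process $B(t)=N(t)$, so that the whole proof reduces to verifying the three hypotheses of that proposition. By Remark~\ref{rem-BMAP}, $\{N(t)\}$ is a cumulative process whose regenerative points are the successive hitting times of a fixed background state and whose cycle length is phase-type. A first, convenient observation is that since $N(\cdot)$ is nondecreasing we have $\Delta B_n^{\ast}=\Delta B_n$ for every $n$ (the running supremum over a cycle is attained at its right endpoint), so the awkward quantity $\Delta B_n^{\ast}$ collapses to the per-cycle increment $\Delta B_n$, i.e.\ the number of arrivals during the $n$th cycle. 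Moreover, by the renewal--reward theorem the drift $b=\EE[\Delta B_1]/\EE[\Delta\tau_1]$ equals the long-run arrival rate of the BMAP, which is $\lambda=\vc{\varpi}\sum_{k=1}^{\infty}k\vc{D}(k)\vc{e}$ from (\ref{defn-lambda}).

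Conditions (i) and (ii) of Proposition~\ref{appendix-prop-01} are then routine. Condition (i), $T\in\calL^{\mu}$ with $\mu\ge2$, is hypothesis (i) verbatim. For condition (ii), the cycle length $\Delta\tau_1$ is phase-type and hence has finite moments of all orders, so $\EE[(\Delta\tau_1)^2]<\infty$; and $\EE[(\Delta B_1)^2]<\infty$ because hypothesis (ii) here, $\sum_{k}\rme^{Q(k)}\vc{D}(k)<\infty$ with $x^{1/\mu}=O(Q(x))$, forces the batch-size distribution to have finite moments of every polynomial order, whence the random (geometric-tailed) sum of batch sizes accumulated over one phase-type cycle has a finite second moment.

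The one genuine obstacle is condition (iii), namely $\EE[\rme^{Q(\Delta B_1)}]<\infty$ (and the analogue for $n=0$). The temptation is to use subadditivity of $Q$ (it is eventually concave with $Q(0)=0$) to write $\rme^{Q(\Delta B_1)}\le\prod_i\rme^{Q(G_i)}$ over the batches $G_i$ of the cycle and then take expectations; but this bound is far too lossy --- for a MAP, where all $G_i\equiv1$, it replaces the correct subexponential weight $\rme^{Q(n)}\le\rme^{Cn^{\alpha}}$ by the exponential weight $\rme^{nQ(1)}$ and spuriously diverges. Instead I would estimate the tail of $\Delta B_1$ directly through a single-big-jump decomposition: a large value of $\Delta B_1$ arises either from one exceptionally large batch or from an exceptionally long cycle carrying many ordinary batches. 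The many-batch route is governed by the phase-type cycle, whose number of batches has a geometric tail, so its contribution to $\PP(\Delta B_1>x)$ decays exponentially and is annihilated against $\rme^{Q(x)}\le\rme^{Cx^{\alpha}}$ with $\alpha<1$ (using $Q\in\SC_{\alpha}$, so $Q(x)/x^{\alpha}$ is nonincreasing). The single-large-batch route contributes at the order of $\PP(G>x)$, and since the stationary batch-size variable $G$ satisfies $\PP(G=k)\propto\vc{\varpi}\vc{D}(k)\vc{e}$, hypothesis (ii) gives $\EE[\rme^{Q(G)}]=\sum_k\rme^{Q(k)}\PP(G=k)<\infty$. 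Summing $\rme^{Q(x)}$ against the two tail contributions then yields $\EE[\rme^{Q(\Delta B_1)}]<\infty$; the case $n=0$ is identical once the initial background distribution is accounted for. (Alternatively one may simply invoke the corresponding single-big-jump estimate for random sums in \citet{Masu13}.)

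With all three hypotheses verified, Proposition~\ref{appendix-prop-01} gives $\PP(N(T)>\lambda x)\simhm{x}\PP(T>x)$. Substituting $x=k/\lambda$ produces $\PP(N(T)>k)\simhm{k}\PP(T>k/\lambda)$, which is the assertion. The only delicate point in the whole argument is thus the verification of condition (iii), i.e.\ controlling the $Q$-exponential moment of the per-cycle arrival count and, in particular, avoiding the naive subadditive bound; everything else is bookkeeping against Proposition~\ref{appendix-prop-01}.
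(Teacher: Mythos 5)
Your reduction is the same as the paper's: set $B(t)=N(t)$, note that monotonicity of $N$ gives $\Delta B_n^{\ast}=\Delta B_n$, use the phase-type regenerative cycle for $\EE[(\Delta\tau_1)^2]<\infty$ and the renewal--reward theorem for $b=\lambda$, and then everything hinges on condition (iii) of Proposition~\ref{appendix-prop-01}. The divergence is in how condition (iii) is verified. The paper disposes of the $n=1$ case in one line, asserting via renewal--reward that
\[
{\EE[\exp\{Q(\Delta B_1^{\ast})\}] \over \EE[\Delta \tau_1]}
= \vc{\pi}\sum_{k=1}^{\infty} \exp\{Q(k)\} \vc{D}(k) \vc{e} < \infty,
\]
and then treats $n=0$ by an irreducibility/contradiction argument: if $\EE[\exp\{Q(N(\tau_0))\}]$ were infinite from some initial phase, that initial segment could be embedded inside a later regenerative cycle, contradicting the $n=1$ bound. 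You replace this with a single-big-jump tail estimate. Your instinct that this is the only genuine obstacle, and that the subadditive bound $\rme^{Q(\sum_i G_i)}\le\prod_i \rme^{Q(G_i)}$ is too lossy, is sound; note moreover that the displayed renewal--reward ``identity'' is really the formula for the per-cycle reward $\sum_i \rme^{Q(G_i)}$ rather than for $\rme^{Q(\sum_i G_i)}$, so the paper's own treatment of this step is more compressed than rigorous and your caution is well placed.

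That said, your replacement argument has a concrete gap exactly where you locate the difficulty. In the dichotomy ``one exceptionally large batch vs.\ many ordinary batches,'' the number-of-batches threshold must grow faster than $x^{\alpha}$ for the geometric tail of the cycle to beat the weight $\rme^{Q(x)}=\rme^{O(x^{\alpha})}$; this forces the large-batch threshold down to $o(x^{1-\alpha})$, and the big-batch contribution to $\EE[\rme^{Q(\Delta B_1)}]$ then requires quantities of the form $\rme^{Q(x)}\,\PP(G>x^{1-\alpha-\delta})$ to be summably small. Hypothesis (ii) only controls $\rme^{Q(y)}\PP(G>y)$, and the mismatch $\rme^{Q(x)-Q(x^{1-\alpha-\delta})}$ is unbounded, so a two-case split with a single threshold does not close. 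What is actually needed is the Kesten-type closure property that $\EE[\rme^{Q(G)}]<\infty$ with $Q\in\SC$ implies $\EE[\rme^{Q(S_K)}]<\infty$ for a geometrically dominated random number $K$ of summands; that is a genuine lemma (it is what your fallback citation to \citet{Masu13} would have to supply), not bookkeeping. Finally, the $n=0$ case is not literally ``identical'': $\Delta B_0^{\ast}$ is governed by the hitting time $\tau_0$ from an arbitrary initial phase, and one must either run the paper's irreducibility argument or observe that $\tau_0$ is itself phase-type and repeat the estimate; a sentence to this effect is needed.
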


\begin{proof}
It suffices to prove that conditions (i)--(iii) of Proposition~\ref{appendix-prop-01} are satisfied. For this purpose, fix $B(t) = N(t)$ for $t\ge 0$. Since the regenerative cycle follows a phase-type distribution (see Remark~\ref{rem-BMAP}), we have $\EE[(\Delta \tau_1)^2] < \infty$. Further since $\{B(t)=N(t);t\ge0\}$ is nondecreasing, we have $\Delta B_n^{\ast} = \Delta B_n$ for all $n \in \bbZ_+$. Therefore it follows from the renewal reward
theorem (see, e.g., \citealt[Chapter 2, Theorem 2]{Wolf89}) that
\[
{\EE[\Delta B_1^{\ast}] \over \EE[\Delta \tau_1]} = \lambda \in (0,\infty),
\qquad
{\EE[\exp\{Q(\Delta B_1^{\ast})\}] \over \EE[\Delta \tau_1]}
= \vc{\pi}\sum_{k=1}^{\infty} \exp\{Q(k)\} \vc{D}(k) \vc{e} 
< \infty,
\]
which lead to $\EE[\exp\{Q(\Delta B_1^{\ast})\}] < \infty$ and thus $\EE[(\Delta B_1)^2] < \infty$. 

It remains to prove $\EE[\exp\{Q(\Delta B_0^{\ast})\}] < \infty$. Let $i_0$ denote the background state at regenerative points, i.e., $J(\tau_n) = i_0$ for all $n \in \bbZ_+$. Suppose that there exists some $i \in \bbM$ such that
\begin{equation}
\EE[\exp\{Q(N(\tau_0))\} \cdot \dd{1}(J(\tau_0) = i_0) \mid J(0) = i] = \infty,
\label{add-eqn-04a}
\end{equation}
where $\tau_0=\inf\{t \ge 0; J(t) = i_0\}$. Let $T_i^{\geqslant\tau_0}
= \inf\{t \ge \tau_0; J(t) = i\}$. Since the background Markov chain
is irreducible, we have
\begin{equation}
\PP(T_i^{\geqslant\tau_0} < \tau_1 \mid J(\tau_0)=i_0) > 0,
\label{add-eqn-04b}
\end{equation}
where $\tau_1=\inf\{t \ge \tau_0; J(t) = i_0\}$.  
It follows from $\Delta B_1^{\ast} = N(\tau_1) - N(\tau_0)$,
 (\ref{add-eqn-04a}) and
(\ref{add-eqn-04b}) that
\begin{eqnarray*}
\EE[\exp\{Q(\Delta B_1^{\ast})\}]
&=&
\EE[\exp\{Q(N(\tau_1) - N(\tau_0) )\} ]
\nonumber
\\
&\ge& \PP(T_i^{\geqslant\tau_0} < \tau_1 \mid J(\tau_0)=i_0) 
\nonumber
\\
&& {} \times 
\EE[\exp\{Q( N(\tau_1) - N( T_i^{\geqslant\tau_0} ) )\} 
\mid J(T_i^{\geqslant\tau_0})=i, T_i^{\geqslant\tau_0} < \tau_1]
\nonumber
\\
&=& \PP(T_i^{\geqslant\tau_0} < \tau_1 \mid J(\tau_0)=i_0) 
\nonumber
\\
&& {} \times 
\EE[\exp\{Q( N(\tau_0) )\} \mid J(0))=i] = \infty,
\end{eqnarray*}
which is inconsistent with $\EE[\exp\{Q(\Delta B_1^{\ast})\}] <
\infty$. Thus (\ref{add-eqn-04a}) is not true. As a result, for any $i
\in \bbM$, we have $\EE[\exp\{Q(N(\tau_0))\} \cdot \dd{1}(J(\tau_0) =
  i_0) \mid J(0) = i] = \infty$, which implies that
$\EE[\exp\{Q(\Delta B_0^{\ast})\}] < \infty$.
\end{proof}

A similar result is presented in \citet{Masu13}.
\begin{prop}[\citealt{Masu13}, Corollary~3.1]\label{appendix-prop-02}
Suppose that $T$ is a nonnegative random variable independent of
$\{(N(t),J(t));t\ge0\}$, where $\{N(t)\}$ and $\{J(t)\}$ denote the
counting process and the background Markov chain, respectively, of
BMAP $\{\vc{C},\vc{D}(1),\vc{D}(2),\dots\}$ introduced in
subsection~\ref{subsec-BMAP}. Suppose that (i) $T \in \calC$; (ii)
$\EE[T] < \infty$; and (iii) $\overline{\vc{D}}(k)= o( \PP(T > k)
)$. We then have $\PP(N(T) > k) \simhm{k} \PP(T > k/\lambda)$.
\end{prop}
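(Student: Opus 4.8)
The result is a BMAP specialization of a general cumulative-process theorem in \citet{Masu13} whose $\calL^{\mu}$ counterpart is recorded here as Proposition~\ref{appendix-prop-01}; accordingly, the plan is to reduce to the consistent-variation version of that theorem by verifying its hypotheses for $B(t)=N(t)$, exactly as Corollary~\ref{appendix-coro-01} reduces to Proposition~\ref{appendix-prop-01} in the light-tailed-sampling regime. I would take the regenerative points of $\{B(t)=N(t)\}$ to be the successive hitting times of a fixed background state $i_{0}$, so that the cycle length $\Delta\tau_{1}$ is phase-type (Remark~\ref{rem-BMAP}). The $\calC$-version of the theorem asserts that if $T\in\calC$ with $\EE[T]<\infty$, if $0<\EE[\Delta\tau_{1}]<\infty$ and $\EE[|\Delta B_{1}|]<\infty$ with drift $b:=\EE[\Delta B_{1}]/\EE[\Delta\tau_{1}]>0$, and if the tail-domination condition $\PP(\Delta B_{1}^{\ast}>x)=o(\PP(T>x))$ holds, then $\PP(B(T)>bx)\simhm{x}\PP(T>x)$. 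Because $\{N(t)\}$ is nondecreasing we have $\Delta B_{1}^{\ast}=\Delta B_{1}=N(\tau_{1})-N(\tau_{0})$, and the renewal-reward theorem gives $b=\lambda\in(0,\infty)$ as in the proof of Corollary~\ref{appendix-coro-01}. Setting $x=k/\lambda$, so that $bx=k$, then yields $\PP(N(T)>k)\simhm{k}\PP(T>k/\lambda)$, which is the assertion.

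The first block of work is to check the moment hypotheses. Finiteness of $\EE[\Delta\tau_{1}]$ is immediate from the phase-type law of the cycle, and finiteness of $\EE[\Delta B_{1}]$ together with the identification $b=\lambda$ follows from the renewal-reward theorem applied to the arrival-count reward, reproducing the first display in the proof of Corollary~\ref{appendix-coro-01}. The hypotheses $T\in\calC$ and $\EE[T]<\infty$ are assumed outright. None of this is delicate.

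The crux is the tail-domination estimate $\PP(\Delta B_{1}^{\ast}>x)=o(\PP(T>x))$, which must be extracted from hypothesis~(iii), namely $\overline{\vc{D}}(k)=o(\PP(T>k))$. Here $\Delta B_{1}^{\ast}=N(\tau_{1})-N(\tau_{0})$ is the total number of arrivals accumulated in one cycle, i.e.\ a random sum of batch sizes. I would control it through the uniformization representation already used in the proof of Theorem~\ref{thm-BMAP-GI-1-00}: with $\theta=\max_{j}|[\vc{C}]_{j,j}|$ and $\vc{\varLambda}(k)$ as there, the cycle contains a light-tailed number of potential event epochs (since $\Delta\tau_{1}$ is phase-type and epochs occur at rate $\theta$), while each individual batch size has tail dominated entrywise by $\theta^{-1}\overline{\vc{D}}(k)\vc{e}=o(\PP(T>k))$. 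As $T\in\calC\subset\calS$, a subexponential random-sum estimate should then upgrade this per-summand bound to the per-cycle bound $\PP(\Delta B_{1}^{\ast}>x)=o(\PP(T>x))$.

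I expect this last step to be the main obstacle. In a BMAP the number of batches in a cycle and the individual batch sizes are jointly governed by the background chain and hence are not independent, so the textbook i.i.d.\ random-sum asymptotics do not apply verbatim; I would handle the dependence by conditioning on the background phase path over the cycle, after which the summands are conditionally independent and dominated by a common tail, and then integrate out using the light tail of the epoch count. Moreover, hypothesis~(iii) gives only the one-sided comparison $o(\PP(T>x))$ for the batch tail, so an upper bound on the random-sum tail is all that is required, and I would use a Kesten-type upper bound for subexponential summands together with the exponential moment of the epoch count to absorb the combinatorial factor, rather than a two-sided random-sum theorem. Once $\PP(\Delta B_{1}^{\ast}>x)=o(\PP(T>x))$ is established, the conclusion is immediate from the general $\calC$-sampling theorem. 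As a self-contained alternative one may argue directly, splitting $\PP(N(T)>k)$ according to whether $T$ is below or above $(1\pm\varepsilon)k/\lambda$, bounding the off-diagonal events by the strong law $N(t)/t\to\lambda$ and by the large-batch estimate above, and finally letting $\varepsilon\downarrow0$ via the defining property of $\calC$.
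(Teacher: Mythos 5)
The paper does not prove this proposition at all: it is imported verbatim from \citet{Masu13} (Corollary~3.1 there), stated immediately after Corollary~\ref{appendix-coro-01} with the remark ``A similar result is presented in \citet{Masu13}'' and no argument. So there is no in-paper proof to compare against; what you have written is a reconstruction of the cited proof, and your reduction --- take $B(t)=N(t)$, regenerate at hitting times of a fixed background state so the cycle is phase-type, identify $b=\lambda$ by renewal reward, and feed everything into the $\calC$-version of the cumulative-process sampling theorem --- is exactly the route the cited reference takes.

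The one substantive gap is the step you yourself flag: deducing $\PP(\Delta B_1^{\ast}>x)=o(\PP(T>x))$ from $\overline{\vc{D}}(k)=o(\PP(T>k))$. As written it is a plan, not a proof, and the tool you reach for (a Kesten-type bound ``for subexponential summands'') does not apply directly, since the batch sizes are not assumed subexponential --- only their tails are $o(\PP(T>x))$; you would have to apply Kesten to a dominating distribution tail-equivalent to $T$, and the Kesten constant then depends on that construction. A cleaner closure uses the hypothesis $T\in\calC$ more directly: $\calC\subset\calD$ gives a Potter-type bound $\PP(T>x/n)\le Cn^{\alpha}\PP(T>x)$ for $x/n$ large, so writing $\Delta B_1^{\ast}$ as a sum of $\nu$ batch sizes with $\nu$ geometrically dominated (phase-type cycle, bounded jump rate) and each batch tail at most $\varepsilon(y)\PP(T>y)$ with $\varepsilon(y)\to0$, the elementary union bound $\PP(\sum_{i\le n}U_i>x)\le n\max_i\PP(U_i>x/n)$ summed against $\PP(\nu=n)$, split at $n\le\sqrt{x}$, yields $o(\PP(T>x))$ --- the remainder $\PP(\nu>\sqrt{x})$ is negligible because $\calC\subset\calL^{\infty}$ forces $\PP(T>x)=\exp\{-o(x^{1/\mu})\}$ for every $\mu$. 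You should also not forget the cycle-$0$ quantity $\Delta B_0^{\ast}$, which needs the same bound (the irreducibility trick used for Corollary~\ref{appendix-coro-01} transfers it from cycle $1$). With that step made rigorous your argument is complete and coincides with the proof in the cited source.
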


%%%%%%%%%%%%%%%%%%%%%%%%%%%%%%%%%%%%%%%%%%%%%%%%%%%%%%%%%%%%%%%%%%%%%%%%%%%

\section*{Acknowledgments}
Research of the author was supported in part by Grant-in-Aid for Young
Scientists (B) of Japan Society for the Promotion of Science under
Grant No.~24710165.

%%%%%%%%%%%%%%%%%%%%%%%%%%%%%%%%%%%%%%%%%%%%%%%%%%%%%%%%%%%%%%%%%%%%%%
%%%							Springer
%
% BibTeX users please use one of
%\bibliographystyle{spbasic}      % basic style, author-year citations
%\bibliographystyle{spmpsci}      % mathematics and physical sciences
%\bibliographystyle{spphys}       % APS-like style for physics
%%%%%%%%%%%%%%%%%%%%%%%%%%%%%%%%%%%%%%%%%%%%%%%%%%%%%%%%%%%%%%%%%%%%%%
%%%							Elsevier
%
%\bibliographystyle{elsarticle-harv}
%%%%%%%%%%%%%%%%%%%%%%%%%%%%%%%%%%%%%%%%%%%%%%%%%%%%%%%%%%%%%%%%%%%%%%
%\bibliography{}   % name your BibTeX data base
%%%%%%%%%%%%%%%%%%%%%%%%%%%%%%%%%%%%%%%%%%%%%%%%%%%%%%%%%%%%%%%%%%%%%%
%%%							Plain
%
% Non-BibTeX users please use
%\bibliographystyle{plain} % plain, alpha, abbrv, unsrt
%%%%%%%%%%%%%%%%%%%%%%%%%%%%%%%%%%%%%%%%%%%%%%%%%%%%%%%%%%%%%%%%%%%%%%
%\bibliography{hm}

\begin{thebibliography}{27}
\providecommand{\natexlab}[1]{#1}
\providecommand{\url}[1]{{#1}}
\providecommand{\urlprefix}{URL }
\expandafter\ifx\csname urlstyle\endcsname\relax
  \providecommand{\doi}[1]{DOI~\discretionary{}{}{}#1}\else
  \providecommand{\doi}{DOI~\discretionary{}{}{}\begingroup
  \urlstyle{rm}\Url}\fi
\providecommand{\eprint}[2][]{\url{#2}}

\bibitem[{Ale\v{s}kevi\v{c}ien\.{e} et~al.(2008)Ale\v{s}kevi\v{c}ien\.{e},
  Leipus, and \v{S}iaulys}]{Ales08}
Ale\v{s}kevi\v{c}ien\.{e}, A., Leipus, R., \& \v{S}iaulys, J. (2008). 
Tail behavior of random sums under consistent variation with applications to the compound renewal risk model. {\it Extremes, 11}(3), 261--279.

\bibitem[{Asmussen(2003)}]{Asmu03}
Asmussen, S. (2003). 
{\it Applied Probability and Queues} (2nd ed.). 
New York: Springer.

\bibitem[{Asmussen and M{\o}ller(1999)}]{AsmuMoll99}
Asmussen, S., \& M{\o}ller J. R. (1999). 
Tail asymptotics for {M/G/1} type queueing
  processes with subexponential increments. {\it Queueing Systems, 33}(1--3), 
153--176.

\bibitem[{Asmussen et~al.(1999)Asmussen, Kl\"{u}ppelberg, and
  Sigman}]{AsmuKlupSigm99}
Asmussen, S., Kl\"{u}ppelberg, C., \& Sigman, K. (1999). 
Sampling at subexponential
  times, with queueing applications. 
{\it Stochastic Processes and their Applications, 79}(2), 265--286.

\bibitem[{Asmussen et~al.(2003)Asmussen, Foss, and Korshunov}]{AsmuFossKors03}
Asmussen, S., Foss, S., \& Korshunov, D. (2003).
 Asymptotics for sums of random variables
  with local subexponential behaviour. 
{\it Journal of Theoretical Probability, 16}(2), 489--518.

\bibitem[{\c{C}inlar(1975)}]{Cinl75}
\c{C}inlar, E. (1975). 
{\it Introduction to Stochastic Processes}. 
Englewood Cliffs, NJ: Prentice-Hall.
  

\bibitem[{Goldie and Kl\"{u}ppelberg(1998)}]{Gold98}
Goldie, C. M., \& Kl\"{u}ppelberg, C. (1998). 
Subexponential distributions. In  R. J. Adler,
 R. E. Feldman, \& M. S. Taqqu (Eds), A Practical Guide to Heavy Tails: Statistical
  Techniques and Applications (pp. 435--459). Boston: Birkh\"{a}user.

\bibitem[{He(2014)}]{He14}
He, Q. M. (2014). 
{\it Fundamentals of Matrix-Analytic Methods}. New York: Springer.

\bibitem[{Jelenkovi\'{c} and Lazar(1998)}]{Jele98}
Jelenkovi\'{c}, P. R., \& Lazar, A. A. (1998). 
Subexponential asymptotics of a
Markov-modulated random walk with queueing applications. 
{\it Journal of Applied  Probability, 35}(2), 325--347.

\bibitem[{Kim and Kim(2012)}]{Kim12}
Kim, B., \& Kim, J. (2012). A note on the subexponential asymptotics of the stationary
  distribution of M/G/1 type Markov chains. 
{\it European Journal of Operational Research, 220}(1), 132--134.

\bibitem[{Kimura et~al.(2010)Kimura, Daikoku, Masuyama, and Takahashi}]{Kimu10}
Kimura, T., Daikoku, K., Masuyama, H., \& Takahashi, Y. (2010).
Light-tailed asymptotics of
  stationary tail probability vectors of Markov chains of M/G/1 type.
  {\it Stochastic Models, 26}(4), 505--548.

\bibitem[{Kimura et~al.(2013)Kimura, Masuyama, and Takahashi}]{Kimu13}
Kimura, T., Masuyama, H., \& Takahashi, Y. (2013).
Subexponential asymptotics of the
  stationary distributions of GI/G/1-type Markov chains. 
{\it Stochastic Models, 29}(2), 190--239.

\bibitem[{Li and Zhao(2005)}]{Li05a}
Li, Q. L., \& Zhao, Y. Q. (2005).
Heavy-tailed asymptotics of stationary probability
  vectors of Markov chains of GI/G/1 type. 
{\it Advances in Applied Probability, 37}(2), 482--509.

\bibitem[{Loynes(1962)}]{Loyn62}
Loynes, R. M. (1962). 
The stability of a queue with non-independent inter-arrival
  and service times. {\it Mathematical Proceedings of the Cambridge Philosophical Society, 58}(3), 497--520.

\bibitem[{Lucantoni(1991)}]{Luca91}
Lucantoni, D. M. (1991). New results on the single server queue with a batch
Markovian arrival process. {\it Stochastic Models, 7}(1), 1--46.

%\bibitem[{Lucantoni et~al.(1990)Lucantoni, Meier-Hellstern, and Neuts}]{Luca90}% Lucantoni, D. M., Meier-Hellstern, K. S., \& Neuts, M. F. (1990).
% A single-server queue with
%  server vacations and a class of non-renewal arrival processes. 
% {\it Advances in Applied Probability, 22}(3), 676--705.

\bibitem[{Masuyama(2011)}]{Masu11}
Masuyama, H. (2011).
Subexponential asymptotics of the stationary distributions of
  M/G/1-type Markov chains. {\it European Journal of Operational Research,
  213}(3), 509--516.

%\bibitem[{Masuyama(2013{\natexlab{a}})}]{Masu13-retrial}
%Masuyama, H. (2013{\natexlab{a}}). 
% Subexponential tail equivalence of the queue
%  length distributions of BMAP/GI/1 queues with and without retrials.
% {\it Preprint, arXiv:1310.4608}.

\bibitem[{Masuyama(2013)}]{Masu13}
Masuyama, H. (2013). Tail asymptotics for cumulative processes
  sampled at heavy-tailed random times with applications to queueing models in
  Markovian environments. {\it Journal of the Operations Research Society of Japan, 56}(4), 257--308.

\bibitem[{Masuyama et~al.(2009)Masuyama, Liu, and Takine}]{Masu09}
Masuyama, H., Liu, B., \& Takine, T. (2009).
Subexponential asymptotics of the BMAP/GI/1 queue. 
{\it Journal of the Operations Research Society of Japan, 52}(4), 377--401.

\bibitem[{Shneer(2006)}]{Shne06}
Shneer, V. V. (2006).
Estimates for interval probabilities of the sums of random
  variables with locally subexponential distributions. 
{\it Siberian Mathematical Journal, 47}(4), 779--786.

\bibitem[{Sigman(1999)}]{Sigm99}
Sigman, K. (1999). Appendix: A primer on heavy-tailed distributions. 
{\it Queueing Systems, 33}(1--3), 261--275.

\bibitem[{Singh et~al.(2013)Singh, Gupta, and Chaudhry}]{Sing13}
Singh, G., Gupta, U. C., \& Chaudhry, M. L. (2013).
Computational analysis of bulk service
  queue with Markovian arrival process: MAP/R${}^{(a, b)}$/1 queue.
 {\it Opsearch, 50}(4), 582--603.

\bibitem[{Smith(1955)}]{Smit55}
Smith, W. L. (1955). 
Regenerative stochastic processes. 
{\it Proceedings of the Royal Society of London, Series A, 232}(1188), 6--31.

\bibitem[{Takine(2000)}]{Taki00}
Takine, T. (2000). A new recursion for the queue length distribution in the
  stationary BMAP/G/1 queue. {\it Stochastic Models, 16}(2), 335--341.

\bibitem[{Takine(2004)}]{Taki04}
Takine, T. (2004). 
Geometric and subexponential asymptotics of Markov chains of
  M/G/1 type. {\it Mathematics of Operations Research, 29}(3), 624--648.

\bibitem[{Wolff(1989)}]{Wolf89}
Wolff, R. W. (1989). 
{\it Stochastic Modeling and the Theory of Queues}. 
Englewood Cliffs, NJ: Prentice Hall.

\end{thebibliography}
%%%%%%%%%%%%%%%%%%%%%%%%%%%%%%%%%%%%%%%%%%%%%%%%%%%%%%%%%%%%%%%%%%%%%%

\end{document}